			\newtheorem{theorem}{Theorem}[section]
			\newtheorem{definition}[theorem]{Definition}
			\newtheorem{lemma}[theorem]{Lemma}
			\newtheorem{proposition}[theorem]{Proposition}
			\newtheorem{corollary}[theorem]{Corollary}
			\newtheorem{remark}[theorem]{Remark}
\newcommand{\C}{\mathbb C}    \newcommand{\R}{\mathbb R}   \newcommand{\Z}{\mathbb Z} 	 
\newcommand{\Hq}{\mathbb H} \newcommand{\Hqr}{\widetilde{\mathbb{H}}} \newcommand{\Sq}{\mathbb S}
\newcommand{\bq}{\overline{q} } \newcommand{\bp}{\overline{p} } \newcommand{\Cn}{C_j(I)} \newcommand{\Cj}{C_j(I)}
\newcommand{\Fc}{\mathcal{F}^{2}}
\newcommand{\Fcn}{\mathcal{F}^{2}_n}
\newcommand{\srn}{\mathcal{SR}_{n}}
\newcommand{\srm}{\mathcal{SR}_{m}}
\newcommand{\srdn}{\mathcal{SR}^{2}_{n}}
\newcommand{\srdun}{\mathcal{SR}^{2}_{1,n}}
\newcommand{\srddn}{\mathcal{SR}^{2}_{2,n}}
\newcommand{\srddk}{\mathcal{SR}^{2}_{2,k}}
\newcommand{\eigendn}{\mathcal{F}_{n}^{2}}
\newcommand{\sgn}{\operatorname{sgn}}
\newcommand{\norm}[1]{{\left\|{#1}\right\|}}    
   \newcommand{\scal}[1]{{\left\langle{#1}\right\rangle}}
    \newcommand{\bz}{\overline{z}}  \newcommand{\bw}{\overline{w}}
\newcommand{\Boxu}{\Box} \newcommand{\Boxd}{\Delta}
\begin{document}
\title[S-polyregular Bargmann spaces]{S-polyregular Bargmann spaces}

\thanks{The research work of A.G. was partially supported by a grant from the Simons Foundation.}

\author{A. Benahmadi}
\email{abdelhadi.benahmadi@gmail.com}
\author{A. El Hamyani}
\email{amalelhamyani@gmail.com}
\author{A. Ghanmi}
\email{allal.ghanmi@um5.ac.ma}

\address{\quad \newline A.E.D.P.G.S., CeReMAR, 
          Department of Mathematics, \newline P.O. Box 1014,  Faculty of Sciences, \newline
          Mohammed V University in Rabat, Morocco}

\begin{abstract}
We introduce two classes of right quaternionic Hilbert spaces in the context of slice polyregular functions, generalizing the so-called slice and full hyperholomorphic Bargmann  spaces.
Their basic properties are discussed, the explicit formulas of their reproducing kernels are given and associated Segal--Bargmann transforms are also introduced and studied.
The spectral description as special subspaces of $L^2$-eigenspaces of a second order differential operator involving the slice derivative is investigated.
\end{abstract}
\maketitle

\section{Introduction}

The classical Bargmann functional space $\Fc$ is defined as the phase space on the complex plane consisting of all $e^{-|z|^2} dxdy$-square integrable entire functions.
It is known to be unitarily isomorphic to the quantum mechanical configuration space $L^2(\R;dt)$ by means of the classical Segal--Bargmann transform (see for examples \cite{Bargmann1961,Folland1989,Zhu2012}).
As special generalizations, in the context of polyanalytic functions, are the generalized Bargmann spaces $\Fcn$ of level $n=0,1,2, \cdots$, (see for example \cite{AbreuBalazsGossonMouayn2015,AbreuFeichtinger2014,GI-JMP2005,Vasilevski2000}), so that $\mathcal{F}^{2}_0=\Fc$.
The corresponding theory has found remarkable applications in time-frequency analysis, analysis of the higher Landau levels and in the multiplexing of signals 
(see \cite{AbreuFeichtinger2014} and the references therein).

A quaternionic analogue $\mathcal{F}^{2}_{slice}$ of $\Fc$ was introduced in \cite{AlpayColomboSabadiniSalomon2014} involving special slice regular functions $\mathcal{SR}$ on the quaternion algebra $\Hq$, i.e., $\Hq$-valued real differentiable functions $f$ on $\Hq\equiv \R^4$ such that
\begin{align}\label{dbar}
\overline{\partial_I} f(x+Iy) := 
\dfrac{1}{2}\left(\frac{\partial }{\partial x}+I\frac{\partial }{\partial y}\right)f|_{\C_I}(x+yI)
\end{align}
vanishes identically on $\Hq$ for every $I\in \Sq=\{q\in{\Hq};q^2=-1\}$. Above and hereafter, $f|_{\C_I}$ denotes the restriction of $f$ to the slice $\C_{I} := \mathbb{R}+\mathbb{R}I$ in $\Hq$. More precisely,
  \begin{align}\label{SHBFspace}
 \mathcal{F}^{2}_{slice} &= \left\{ f(q) =\sum_{j=0}^{+\infty} q^{j}  c_j; \, c_j\in \Hq
 , \quad
  \sum_{j=0}^{+\infty} j!|c_j|^{2}<+\infty \right\}.
   \end{align}
It is shown in \cite{AlpayColomboSabadiniSalomon2014} that $\mathcal{F}^{2}_{slice}$ is independent of $I$ and is a reproducing kernel quaternionic Hilbert space.
The related quaternionic Segal--Bargmann transform is studied in \cite{DG2017}. It connects $\mathcal{F}^{2}_{slice}$ to the $L^2$-Hilbert space of quaternionic-valued functions on the real line.

Motivated by the works \cite{AbreuBalazsGossonMouayn2015,AbreuFeichtinger2014,Balk1991,GI-JMP2005,Vasilevski2000} studying and characterizing the polyanaliticity in the complex setting as well as by Brackx' works \cite{Brackx1976a,Brackx1976b} studying the $k$-monogenic functions with respect to the Fueter operator, our aim in \cite{ElHamyaniG2017} was the study of possible generalizations of $\mathcal{F}^{2}_{slice}$ and its associated Segal--Bargmann transform to the context of slice $n$-polyregular ($\srn$) functions with respect to the slice derivative.
The concrete description of these spaces invoke the quaternionic Hermite polynomials
\begin{equation}\label{20}
H^Q_{m,n}(q,\overline{q})=m!n! \sum_{j=0}^{\min(m,n)}\dfrac{(-1)^{j}}{j!} \frac{q^{m-j}\overline{q}^{n-j}}{(m-j)!(n-j)!}
\end{equation}
for which we have provide an accurate systematic study in \cite{ElHamyani2018}. Such polynomials are the quaternionic analogues of the polyanalytic Hermite polynomials $H_{m,n}(z,\bz)$ (\cite{Gh13ITSF,Ito1952}) that play a crucial role in studying some basic properties of polyanalytic functions \cite{AbreuFeichtinger2014}.

The present paper is in fact an improved version of \cite{ElHamyaniG2017}. We consider two kinds of such generalizations. These spaces will be called here S-polyregular Bargmann space of level $n$ of first and second kind and we will denote them by $\srdun$ and $\srddn$, respectively.
	It should be noted that $\mathcal{SR}^2_{1,0} =\mathcal{SR}^2_{2,0}$ and reduce further to $ \mathcal{F}^{2}_{slice}$ in \eqref{SHBFspace} (see Section \ref{gbs1}).
Both $\srdun$ and $\srddn$ are natural extension of $\mathcal{F}^{2}_{slice}$ to the setting of S-polyregular functions (see Definition \ref{SPR}) and appear as special subspaces of
the Hilbert space
$$ \srdn := \srn \cap L^2(\C_I; e^{-|\xi|^2} d\lambda_I),$$
the space of all S-polyregular functions $f:\Hq \longrightarrow \Hq$ subject to the norm boundedness $\norm{f}_{\C_I}<+\infty$,
where $\norm{\cdot}_{\C_I}$ is the norm induced by the inner product
\begin{equation}\label{spfg}
 \scal{f,g}_{\C_I} = \int_{\C_I}\overline{f|_{\C_I}(q)} g|_{\C_I}(q)e^{-\vert{q}\vert^2} d\lambda_I(q).
\end{equation}

Our main aim is to give a concrete description of these spaces.  We prove that $\srdun$ and $\srddn$ are reproducing kernel quaternionic Hilbert spaces whose reproducing kernels are given explicitly in terms of Laguerre polynomials (see Theorem \ref{thm:RepKernel}).
The proof is based essentially on a weak version of the Identity Principle for S-polyregular functions that we establish in Subsection 2.2 (Proposition \ref{SpolyIP}) and on a natural extension of the left star product for S-polyregular functions. Moreover, a hilbertian decomposition of $L^2(\C_I; e^{-|\xi|^2} d\lambda_I)$ in terms of $\srddn$ is also given (Theorem \ref{HilbDecomp}).

Associated Segal--Bargmann transforms $\mathcal{B}_{\ell,n}$, $\ell=1,2$, are then introduced and studied in some details  (see Theorems \ref{thm:SBTn} and \ref{thm:isometrySBTm}).
They are defined on $L^2_{\Hq}(\R;dt)$, the $L^2$-Hilbert space of left quaternionic-valued functions on the real line. Their kernel functions involve the Hermite polynomials extended to the quaternions.
It should be noted here that for $n=0$, the transform $\mathcal{B}_{\ell,0}$ is equal to the one considered in \cite{DG2017}.

Another task of the present paper is to show that the constructed spaces are closely connected to the concrete $L^2$-spectral analysis of the semi-elliptic (slice) second-order differential operator
\begin{equation}\label{Laplaceian}
\Boxu_{q}= - \partial_s \overline{\partial_s} + \bq \overline{\partial_s},
\end{equation}
where
\begin{equation}\label{120}
\overline{\partial_s} f(q)=\left\{
\begin{array}{ll}
\overline{\partial_{I_q}} f 
(x+I_q y)
, & \hbox{if } q=x+I_{q} y \in \Hq \setminus\R; \\
\dfrac{df}{dx}(x), & \hbox{if }  q=x\in  \R ,
\end{array}
\right.
\end{equation}
which can seen as the conjugate of the left slice derivative $\partial_{s}$ that we can define in a similar way in terms of $\partial_{I_q}$.
In fact, such spaces are realized as special subspaces of the $L^2$-eigenspaces
\begin{equation}\label{37r}
\eigendn= \left\{ f \in L^{2}(\Hq;e^{-\mid q\mid^{2}}d\lambda); \, \Boxu_{q}f=n f  \right\},
\end{equation}
where $n=0,1,2, \cdots$, and $d\lambda$ denotes the Lebesgue measure on $\Hq\simeq \R^4$.
The $L^2$-spectral description of $\Boxu_{q}f=n f $ was possible by dealing first with the $\mathcal{C}^\infty$ right-eigenvalue problem
$\Boxu_{q}f= f\mu $ on $\Hqr:=\mathbb{H}\setminus{\R}$ 
and then by extending appropriately the obtained explicit solutions to the whole $\Hq$ (Theorem \ref{thm:CInftydescription}).
Thereby, by manipulating the asymptotic behavior of such eigenfunctions, we show that the spectrum of $\Boxu_{q}$ is purely discrete and consists of the eigenvalues $\mu= n$ 
which occur with infinite degeneracy (see Theorem \ref{thm:description}).
The spaces $\mathcal{SR}^2_{\ell,n}$, $\ell=1,2$, are then the specific subspaces of $\eigendn$ described by Theorem \ref{thm:description}.
 This becomes clear in the discussion provided in the last section.

Added to this introductory section, the rest of the paper is structured as follows. We devote Section 2 to some elementary and needed properties satisfied by the S-polyregular functions. We describe in Section 3 the
S-polyregular Bargmann spaces $\mathcal{SR}^2_{1,n}$ and $\mathcal{SR}^2_{2,n}$ and we give the explicit formulas for their reproducing kernels. The associated Segal--Bargmann transforms are introduced and studied in Section 4. While, we have reserved Section 5 to the spectral realization of these S-polyregular Bargmann spaces. Some concluding remarks on the full S-polyregular Bargmann spaces constitute the content of Section 6.

\section{S-polyregular functions}
\subsection{The real skew algebra of quaternions.} 

 The elements of the division algebra of quaternions $\Hq$ are $4$-component extended complex numbers of the form
 $q=x_0+x_1\mathbf{i}+x_2\mathbf{j}+x_3\mathbf{k} \in{\Hq}$, where  $x_0,x_1,x_2,x_3\in{\mathbb{R}}$ and the imaginary components $\mathbf{i}$, $\mathbf{j}$, $\mathbf{k}$ satisfy the Hamiltonian computation rules
 $\mathbf{i}^2=\mathbf{j}^2=\mathbf{k}^2=\mathbf{i}\mathbf{j}\mathbf{k}=-1$;
$\mathbf{k}\mathbf{i}=-\mathbf{i}\mathbf{k}=\mathbf{j}.$
   According to this algebraic representation, the quaternionic conjugate is defined to be $x_0-x_1i-x_2j-x_3k =\Re(q)-\Im (q)$, where $\Re(q)=x_0$ and $\Im (q)=x_1i+x_2j+x_3k$ . Here and elsewhere in the paper $\overline{q}$ denotes the algebraic conjugate of the quaternion $q\in\Hq$.
   Then, we have $\overline{ pq }= \overline{q}\, \overline{p}$ for $p,q\in \Hq$, and the modulus of $q$ is defined to be
 $|q|=\sqrt{q\overline{q}}$.  The  polar representation is given by
$ q=re^{I\theta},$ 
where $r=|q| \geq 0$, $\theta \in [0,2\pi[$,
and $I$ belongs to the set of imaginary units $\mathbb{S}$,
 which can be identified with the unit sphere $S^2=\lbrace{q\in{\Im \Hq}; \vert{\Im (q)}\vert=1}\rbrace$ in $\Im \Hq=\R \mathbf{i}+\R\mathbf{j}+\R\mathbf{k}$.
The representation $q=re^{I\theta}$ is not unique unless $q$ is not real.
Another interesting representation of $q\in \Hq$ is given by $q=x+I y$ for some real numbers $x$ and $y$ and imaginary unit $I\in \mathbb{S}$. It is unique for any $q\in \Hqr=\Hq\setminus \R$ with $y>0$. Thus, $\Hq$ can be seen as the infinite union of the slices $\C_{I} := \mathbb{R}+\mathbb{R}I$.
The last representation was crucial in developing the theory of quaternionic slice hyperholomorphic functions that has been introduced
by Gentili and Struppa in their seminal work \cite{GentiliStruppa07}. 
Since then, they  have been object of intensive research and the corresponding hypercomplex analysis has been developed. It has found many interesting applications in operator theory, quantum physics, Schur analysis and different branches of differential geometry. See for instance \cite{
	Altavilla2018,AltavilladeFabritiis2018-1,
	GentiliStoppatoStruppa2013,
	GentiliStruppa2010} and the references therein. 

\subsection{S-polyregular functions and first properties. }

The solution of the Cauchy--Riemann equation $\overline{\partial_I} f|_{\C_I}=0$ on $\Hq$, involving the derivative in \eqref{dbar}, leads to the left power series
\begin{align} \label{remSliceCoef}
\varphi(x+Iy) = \sum_{j=0}^{+\infty} (x+Iy)^j \alpha_j(I),
\end{align}
with infinite convergent radius, where $\alpha_j$ are seen as functions $\alpha_j:I \longmapsto \alpha_j(I)$ on $\Sq$ with values in $\Hq$. If in addition $\alpha_j(I)$ are constants on $\Sq$,  we recover the standard space of slice regular functions
\cite{
	GentiliStoppatoStruppa2013,GentiliStruppa07}.
A natural generalization is that of S-polyregular functions.

\begin{definition}[\cite{ElHamyani2018,AlpayDikiSabadini2019-3}]\label{SPR}
	A quaternionic-valued function $f$ on a domain $\Omega \subset \Hq$ such that $\Omega\cap \R \ne \emptyset$ is said to be (left) slice polyregular (S-polyregular) of level $n$ (order $n+1$), if it is a real differentiable in $\Omega$ and its restriction $f_{\Omega_I}$ is polyanalytic in $\Omega_I:=\Omega\cap \C_I$ for every $I\in \mathbb{S}$, in the sense that
	the function $\overline{\partial_I}^{n+1} f : \Omega_I \longrightarrow \Hq$  vanishes identically on $\Omega_I$.
	We denote by $\mathcal{SR}_n(\Omega)$ the corresponding right quaternionic vector  space.
\end{definition}

Topologically, the space $\mathcal{SR}_n(\Omega)$ is endowed with the natural topology of uniform convergence on compact sets in $\Omega$, so that it turns out to be a right vector space over the non-commutative field $\Hq$. We provide below some of their basic properties that we need to develop the rest of the paper for the case $\Omega=\Hq$. Thus, one can easily prove the following elementary characterization for the elements in $\mathcal{SR}_n :=\mathcal{SR}_n(\Hq)$ in terms of the elements of $\mathcal{SR}$.

\begin{proposition}[\cite{AlpayDikiSabadini2019-3}]\label{propSPDecom}
	 For every $f\in \mathcal{SR}_n$, there exist some  $\varphi_k \in \mathcal{SR} $, $k=0,1,\cdots, n$, such that
	$$f(q,\overline{q})=\sum_{k=0}^{n}\overline{q}^k\varphi_k(q).$$
\end{proposition}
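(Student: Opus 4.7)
The plan is to prove the decomposition by induction on the level $n$, exploiting two facts: (i) the slice derivative $\overline{\partial_s}$ lowers the S-polyregular order by one, and (ii) against powers of $\bq$ it obeys a clean product rule that makes the coefficients $\varphi_k$ recoverable from the derivative. The base case $n=0$ is immediate since $\mathcal{SR}_0 = \mathcal{SR}$ by Definition \ref{SPR}, and one takes $\varphi_0 := f$.

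For the inductive step, assume the result for level $n-1$ and take $f \in \srn$. Setting $h := \overline{\partial_s} f$, on each slice $\C_I$
$$(\overline{\partial_I})^n h|_{\C_I} = (\overline{\partial_I})^{n+1} f|_{\C_I} = 0,$$
so $h \in \mathcal{SR}_{n-1}$, and the induction hypothesis gives
$$h = \sum_{k=0}^{n-1} \bq^k \psi_k(q), \qquad \psi_k \in \mathcal{SR}.$$
Define $\varphi_k := \tfrac{1}{k}\psi_{k-1}$ for $k=1,\ldots,n$ (all slice regular) and
$$g := f - \sum_{k=1}^{n} \bq^k \varphi_k.$$
Using the product rule below, $\overline{\partial_s} g = h - \sum_{k=1}^{n} k\bq^{k-1}\varphi_k = 0$, so $g \in \mathcal{SR}$, and setting $\varphi_0 := g$ yields the announced decomposition.

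The key algebraic step is the identity $\overline{\partial_s}(\bq^k \varphi) = k\bq^{k-1}\varphi$ whenever $\varphi$ is slice regular and $k\ge 1$. On each slice $\C_I$, one computes $\partial_x \bq^k = k\bq^{k-1}$ and $\partial_y \bq^k = -kI\bq^{k-1}$ using $\bq = x-Iy$, so $\overline{\partial_I}\bq^k = k\bq^{k-1}$. Leibniz then gives
$$\overline{\partial_I}(\bq^k \varphi) = k\bq^{k-1}\varphi + \bq^k \overline{\partial_I}\varphi = k\bq^{k-1}\varphi,$$
the last equality by slice regularity of $\varphi$. The Leibniz step requires $I\bq^k = \bq^k I$, which holds because both factors lie in the commutative subfield $\C_I$.

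The main obstacle will be to justify that $h = \overline{\partial_s} f$ belongs to $\mathcal{SR}_{n-1}$ as a function on the whole of $\Hq$, not just slicewise, since $\overline{\partial_s}$ is defined piecewise between $\Hq\setminus\R$ and $\R$ by \eqref{120}. One must verify that the two definitions agree continuously at the real axis (where $q=\bq$) and that $h$ inherits real differentiability from $f$; once this compatibility is in place, the slicewise vanishing transfers directly from $f$ to $h$ and the induction runs without further difficulty.
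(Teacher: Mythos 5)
Your strategy is genuinely different from the paper's: the paper treats Proposition \ref{propSPDecom} as essentially immediate from the classical decomposition of complex polyanalytic functions applied on each slice (citing \cite{AlpayDikiSabadini2019-3}), whereas you run the induction directly in the quaternionic setting using the lowering operator $\overline{\partial_s}$. The slicewise computations are correct: $\overline{\partial_I}\,\bq^k=k\bq^{k-1}$, the Leibniz rule is legitimate because $\bq^k$ commutes with $I$ on $\C_I$, and the telescoped function $g=f-\sum_{k\geq 1}\bq^k\varphi_k$ does land in the kernel of $\overline{\partial_I}$ on each slice. This is the standard proof of the complex polyanalytic decomposition transplanted to slices, and it has the merit of producing explicit formulas for the components, in the spirit of \eqref{component} and Theorem \ref{thmSPDecom2}.

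However, the obstacle you flag at the end is not a routine compatibility check, and as written your induction does not close. With the definition \eqref{120}, the two branches of $\overline{\partial_s}$ do \emph{not} agree at the real axis: for $f(q)=\bq\,\varphi(q)$ with $\varphi$ slice regular, the slicewise derivative $\overline{\partial_I}f$ tends to $\varphi(x)$ as $y\to 0$, while the real branch gives $\frac{d}{dx}\bigl(x\varphi(x)\bigr)=\varphi(x)+x\varphi'(x)$. Hence $h=\overline{\partial_s}f$ is in general discontinuous across $\R$, is not real differentiable on $\Hq$, and therefore does not belong to $\mathcal{SR}_{n-1}$ in the sense of Definition \ref{SPR}; the induction hypothesis cannot be applied to it. The natural repair is to run the induction slice by slice with $\overline{\partial_I}$, but this only yields slice-dependent components $\varphi_k^I$ holomorphic on each $\C_I$; one must then still prove that these assemble into globally defined slice regular functions $\varphi_k$ on $\Hq$. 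This gluing is nontrivial already for $n=0$, where it amounts to the Representation Formula for slice regular functions, and it is precisely what the paper's appeal to the complex polyanalytic characterization together with sliceness delivers. That step is the real content of the proposition and is missing from your argument.
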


	Whose proof is immediate and lies essentially on the characterization of polyanalytic functions in complex setting \cite{AbreuFeichtinger2014,Balk1991}. 
The following result is a second characterization of S-polyregular functions.

\begin{theorem}\label{thmSPDecom2}
	A function $f $ belongs to $\mathcal{SR}_n$ if and only if there exists $\varphi_0 \in \mathcal{SR}$ such that
	$$f(q,\overline{q})=\varphi_0(q) + \sum_{j=1}^{n} \sum_{k=0}^{n-j}  (-1)^k \frac{\overline{q}^{j+k}}{j!k!} \overline{\partial_s}^{j+k}f(q).$$
\end{theorem}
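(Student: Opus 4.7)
The plan is to first recast the stated identity into a simpler, equivalent Almansi-type form, and then establish each direction separately by direct manipulation with $\overline{\partial_s}$. Regrouping the double sum by setting $m=j+k$, and using the elementary combinatorial identity $\sum_{j=1}^{m}\frac{(-1)^{m-j}}{j!(m-j)!}=\frac{(-1)^{m+1}}{m!}$ (a consequence of $\sum_{j=0}^{m}\binom{m}{j}(-1)^{j}=0$ for $m\geq 1$), the identity to be proved collapses to
$$\sum_{m=0}^{n}\frac{(-1)^m}{m!}\,\overline{q}^{\,m}\,\overline{\partial_s}^{\,m}f(q)=\varphi_0(q). \qquad (\ast)$$
So it is enough to show that $f\in\mathcal{SR}_n$ if and only if there exists $\varphi_0\in\mathcal{SR}$ satisfying $(\ast)$.

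For the forward implication I would invoke Proposition~\ref{propSPDecom} to write $f=\sum_{k=0}^{n}\overline{q}^{\,k}\varphi_k$ with $\varphi_k\in\mathcal{SR}$, so that $\overline{\partial_s}\varphi_k=0$. Iterated slice-wise Leibniz differentiation, using $\overline{\partial_s}\,\overline{q}=1$ on each slice, then yields $\overline{\partial_s}^{\,m}f=\sum_{k=m}^{n}\frac{k!}{(k-m)!}\,\overline{q}^{\,k-m}\varphi_k$. Inserting this into the left-hand side of $(\ast)$ and interchanging the two summations, the coefficient of $\overline{q}^{\,k}\varphi_k$ reduces to $\sum_{m=0}^{k}(-1)^{m}\binom{k}{m}=\delta_{k,0}$, so only the $k=0$ term survives and it equals $\varphi_0$. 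This proves $(\ast)$.

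For the converse, which I expect to be the more delicate part, the idea is to apply $\overline{\partial_s}$ once to $(\ast)$. The right-hand side vanishes by the slice regularity of $\varphi_0$. On the left, the Leibniz rule $\overline{\partial_s}(\overline{q}^{\,m}g)=m\,\overline{q}^{\,m-1}g+\overline{q}^{\,m}\,\overline{\partial_s}g$ splits each summand in two; after reindexing $m\mapsto m-1$ in the first part, the two sums telescope and cancel, leaving only the boundary contribution $\frac{(-1)^n}{n!}\,\overline{q}^{\,n}\,\overline{\partial_s}^{\,n+1}f$. Consequently $\overline{q}^{\,n}\,\overline{\partial_s}^{\,n+1}f\equiv 0$ on each slice; since $\overline{q}^{\,n}$ vanishes only at the origin of the slice, continuity forces $\overline{\partial_s}^{\,n+1}f\equiv 0$, i.e.\ $f\in\mathcal{SR}_n$. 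The most sensitive issue will be justifying the slice-wise Leibniz rule for $\overline{\partial_s}$ uniformly on $\Hq$, since at points of the real axis $\overline{\partial_s}$ is defined as $d/dx$ by \eqref{120} rather than as a Cauchy--Riemann operator; this has to be reconciled via a continuity argument based on the real differentiability stipulated in Definition~\ref{SPR}.
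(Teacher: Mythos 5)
Your argument is correct, and it takes a genuinely different route from the paper's. The paper works directly with the component decomposition $f=\sum_{k=0}^{n}\overline{q}^{\,k}\varphi_{k}$ of Proposition~\ref{propSPDecom}: it computes $\overline{\partial_s}^{\,k}f=\sum_{j\geq k}\frac{j!}{(j-k)!}\overline{q}^{\,j-k}\varphi_{j}$, inverts this triangular system by an induction (only sketched, after inspecting the cases $k=n,n-1,n-2,n-3$) to obtain $\varphi_{j}=\frac{1}{j!}\sum_{s=0}^{n-j}(-1)^{s}\frac{\overline{q}^{\,s}}{s!}\overline{\partial_s}^{\,j+s}f$, and substitutes back --- which establishes only the ``only if'' half of the equivalence. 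You instead collapse the double sum to the single-sum identity $(\ast)$, namely $\sum_{m=0}^{n}\frac{(-1)^{m}}{m!}\overline{q}^{\,m}\overline{\partial_s}^{\,m}f=\varphi_{0}$ (your reindexing and the alternating binomial evaluation $\sum_{j=1}^{m}\frac{(-1)^{m-j}}{j!(m-j)!}=\frac{(-1)^{m+1}}{m!}$ both check out), prove the forward direction by the clean computation $\sum_{m=0}^{k}(-1)^{m}\binom{k}{m}=\delta_{k,0}$, and --- importantly --- supply a proof of the converse via the telescoping application of $\overline{\partial_s}$, a direction the paper's proof does not address at all even though the statement is an ``if and only if''. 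What your approach buys is precisely this missing half, plus a closed form that avoids the unproved induction; what it costs is the extra care you already flag: the converse needs $f$ regular enough that $\overline{\partial_s}^{\,n+1}f$ exists (one derivative beyond what the displayed formula itself uses), and the identification of $\overline{\partial_s}$ with $\overline{\partial_I}$ at real points, where \eqref{120} switches to $d/dx$, must be settled slice by slice; both are benign under the smoothness implicitly assumed throughout the paper, and the slice-wise Leibniz rule $\overline{\partial_I}(\overline{q}^{\,m}g)=m\overline{q}^{\,m-1}g+\overline{q}^{\,m}\overline{\partial_I}g$ survives the noncommutativity because $\overline{q}$ and $I$ commute on $\C_I$.
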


\begin{proof}
		By Proposition \ref{propSPDecom}, any $f\in \mathcal{SR}_n$ is of the form
	$f(q,\overline{q})=\sum\limits_{k=0}^{n}\overline{q}^k\varphi_k(q)$ for some  $\varphi_k \in \mathcal{SR} $, $k=0,1,\cdots, n$.
	Therefore, $\overline{\partial_s}^k f =0$ whenever $k>n$, and
	$$ \overline{\partial_s}^k f
	=  \sum_{j=0}^n \overline{\partial_s}^k(\overline{q}^j) \varphi_j
	=    \sum_{j= k}^n \frac{j!}{(j-k)!} \overline{q}^{j-k} \varphi_j $$
	when $k\leq n$. By considering the particular cases $k=n$, $k=n-1$, $k=n-2$ and $k=n-3$, one proposes the following
	$$(n-k)! \varphi_{n-k} = \sum_{s=0}^{k}   (-1)^s \frac{\overline{q}^{s}}{s!} \overline{\partial_s}^{n-k+s} f$$
	for $k<n$, which can be proved by induction. Equivalently, we write
		\begin{align}\label{component}
		\varphi_{j} = \frac{1}{j!} \sum_{s=0}^{n-j}   (-1)^s \frac{\overline{q}^{s}}{s!} \overline{\partial_s}^{j+s} f(q); \quad j\geq 1.
		\end{align}
		Therefore, the expression of $f$ becomes
			$$f(q,\overline{q})=\varphi_0(q) + \sum_{j=1}^{n} \sum_{k=0}^{n-j}  (-1)^k \frac{\overline{q}^{j+k}}{j!k!} \overline{\partial_s}^{j+k} f(q).$$
\end{proof}

\begin{remark}
	The component functions in Proposition \ref{propSPDecom}, of a given S-polyregular function $f$, are given in terms of $f$ and its successive derivatives (see Equation \eqref{component}).
\end{remark}

Thanks to these characterizations (Proposition \ref{propSPDecom} and Theorem \ref{thmSPDecom2}) many interesting analytic properties of S-polyregular functions can be derived from their analogues of the slice regular functions. However, one must be careful since (as is the case for complex polyanalytic functions) several known properties for $\mathcal{SR}$ prove false when applied to $\mathcal{SR}_n$.
For example, S-polyregular functions may even vanish on an accumulation set. This is the case of $1-q\overline{q}$ which is a nonzero S-polyregular on $\mathbb{H}$ but vanishes on the closed set $\{q\in \mathbb{H}, |q|=1\} $.

Similarly to the complex setting, the first order differential operator $\partial_s -  \overline{q}$, will play a crucial rule in this theory.
By considering the differential transformation
$$[\mathcal{H}_{n} (F)](q) := (\partial_s- \overline{q})^n(F)(q),$$
one proves the following.

\begin{theorem}\label{HermiteOp0}
	Let $F$ be a given S-regular function. Then, the functions $\mathcal{H}_{n} (F)$, $n=0,1,2, \cdots$, are S-polyregular  and form an orthogonal system in $L^2(\C_I;e^{-|q|^2}d\lambda_I)$.
\end{theorem}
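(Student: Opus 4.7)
The plan is to reduce everything to the corresponding complex analytic statement on each slice $\C_I$ and then exploit the known orthogonality of the quaternionic Hermite polynomials $H^Q_{m,n}$ from \eqref{20}.

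First I would verify the S-polyregularity of $\mathcal{H}_n(F)$. Fix a slice $\C_I$; on $\C_I$ the operators $\partial_s$ and $\overline{\partial_s}$ coincide with the complex Wirtinger derivatives $\partial_z$ and $\partial_{\bz}$ for $z=x+Iy$, so that $(\partial_s-\bq)$ becomes the standard creation-type operator. A direct check gives the commutator identity
$$ [\overline{\partial_s}, \partial_s - \bq] = -1 $$
on $\C_I$ (since $[\partial_{\bz},\bz]=1$). Starting from $\overline{\partial_s}F=0$, an induction on $n$ then yields
$$ \overline{\partial_s}\,(\partial_s-\bq)^n F = -n\,(\partial_s-\bq)^{n-1} F, $$
so that $\overline{\partial_s}^{\,n+1}\mathcal{H}_n(F)\equiv 0$ on every $\C_I$. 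Equivalently, the Rodrigues-type identity $\mathcal{H}_n(F)(q)=e^{|q|^2}\partial_s^{\,n}\bigl(e^{-|q|^2}F(q)\bigr)$ (which follows from $\partial_s(e^{-|q|^2}\,\cdot\,)=e^{-|q|^2}(\partial_s-\bq)(\,\cdot\,)$ on $\C_I$) makes the $(n+1)$-th application of $\overline{\partial_s}$ transparent. Since this holds for every $I\in\Sq$, we conclude $\mathcal{H}_n(F)\in \srn$.

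Next I would obtain an explicit formula for $\mathcal{H}_n(F)$ in terms of the quaternionic Hermite polynomials. Using the slice power-series expansion $F(q)=\sum_{j\geq 0} q^j a_j$ of the S-regular function $F$, and noting that $\mathcal{H}_n$ is right $\Hq$-linear (because $\partial_s$ and multiplication by $\bq$ commute with right multiplication by quaternion constants), the Rodrigues formula and a binomial expansion give, on $\C_I$,
$$ (\partial_s-\bq)^n(q^j)=\sum_{k=0}^{\min(j,n)}\binom{n}{k}\frac{j!\,(-1)^{n-k}}{(j-k)!}\,q^{\,j-k}\,\bq^{\,n-k} = (-1)^n H^Q_{j,n}(q,\bq), $$
after the change of index $k\leftrightarrow j-k$ and comparison with \eqref{20}. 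Therefore
$$ \mathcal{H}_n(F)\big|_{\C_I}(q) \,=\, (-1)^n \sum_{j\geq 0} H^Q_{j,n}(q,\bq)\,a_j. $$

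Finally I would invoke the orthogonality relations for the quaternionic Hermite polynomials proved in \cite{ElHamyani2018}, namely
$$ \int_{\C_I} \overline{H^Q_{m,n}(q,\bq)}\,H^Q_{p,r}(q,\bq)\,e^{-|q|^2}d\lambda_I(q) \,=\, \pi\, m!\,n!\,\delta_{mp}\,\delta_{nr}, $$
together with the symmetry $\overline{H^Q_{j,n}(q,\bq)}=H^Q_{n,j}(q,\bq)$ valid on the slice (the coefficients of $H^Q$ are real and $q,\bq\in\C_I$ commute). Expanding $\langle\mathcal{H}_n(F),\mathcal{H}_m(F)\rangle_{\C_I}$ through the formula above yields
$$ \scal{\mathcal{H}_n(F),\mathcal{H}_m(F)}_{\C_I} \,=\, (-1)^{n+m}\,\pi\, n!\,\delta_{nm}\sum_{j\geq 0}j!\,|a_j|^2, $$
which is zero whenever $n\ne m$, proving the orthogonality. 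The main delicate point is the second step, where one must be careful with the non-commutativity when expanding $\mathcal{H}_n$ on a slice — but the right-linearity of $\mathcal{H}_n$ and the fact that $q,\bq\in\C_I$ commute make the reduction to the classical complex Hermite framework work smoothly.
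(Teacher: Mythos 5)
Your proof is correct, and while the polyregularity step is essentially the paper's (you derive $\overline{\partial_s}\,\mathcal{H}_n(F)=-n\,\mathcal{H}_{n-1}(F)$ from the commutator $[\overline{\partial_s},\partial_s-\bq]=-1$, whereas the paper reads off $\overline{\partial_s}^{\,n}\mathcal{H}_n(F)=(-1)^n n!\,F$ from the binomial expansion $\mathcal{H}_n(F)=\sum_j(-1)^j\binom{n}{j}\bq^j\partial_s^{n-j}F$ — two ways of saying the same thing), your orthogonality argument is genuinely different. The paper never expands $F$: it uses only that $\partial_s-\bq$ is the formal adjoint of $\overline{\partial_s}$ on $L^2(\C_I;e^{-|q|^2}d\lambda_I)$, so that for $n>m$,
$$\scal{\mathcal{H}_n(F),\mathcal{H}_m(F)}=\scal{F,\overline{\partial_s}^{\,n}\mathcal{H}_m(F)}=(-1)^m m!\,\scal{F,\overline{\partial_s}^{\,n-m}F}=0,$$
a three-line integration-by-parts argument with no convergence issues and no reliance on the basis $H^Q_{j,k}$. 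You instead expand $F(q)=\sum_j q^j a_j$, identify $\mathcal{H}_n(q^j)=(-1)^nH^Q_{j,n}$, and import the orthogonality of the quaternionic Hermite polynomials from \cite{ElHamyani2018}; this is valid but less self-contained (the cited orthogonality is itself usually proved by the very adjointness the paper uses directly), and it forces you to justify the interchange of the infinite sum with the integral, which silently requires $\sum_j j!|a_j|^2<\infty$ — harmless here, since that is equivalent to $\mathcal{H}_n(F)\in L^2$ and hence implicit in the statement, but worth flagging. What your route buys in exchange is the explicit identity $\mathcal{H}_n(F)=(-1)^n\sum_j H^Q_{j,n}\,a_j$ together with $\norm{\mathcal{H}_n(F)}^2_{\C_I}=\pi\,n!\sum_j j!|a_j|^2$, which the paper only extracts later (in Theorem \ref{thmHermiteOp} and the remark identifying $\mathcal{H}_n(F_m)$ with $H^Q_{m,n}$ — whose sign, incidentally, your computation corrects: it should be $(-1)^nH^Q_{m,n}$).
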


\begin{proof}
	Notice first that
	\begin{align}\label{spolBinom}
	\mathcal{H}_{n} (F) = \sum_{j=0}^n (-1)^j\binom{n}{j} \overline{q}^j \partial_s^{n-j} F.
	\end{align}
	Therefore, we have
	$$\overline{\partial}_s^n\mathcal{H}_{n} F =  (-1)^n n! F.$$
	Hence, $\mathcal{H}_{n}F$  is clearly S-polyregular of order $n$, for $F$ being slice regular. 	
	This is also clear from \eqref{spolBinom} according to Proposition \ref{propSPDecom}.
	Consequently, using the fact that $\partial_s -  \overline{q}$ is the formal adjoint operator of  $\overline{\partial}_s$ when acting on the Hilbert space $L^2(\C_I;e^{-|q|^2}d\lambda_I)$, one can
	prove that $(\partial_s- \overline{q})F$ is orthogonal to $F$ when $F$ is slice regular in $L^2(\C_I;e^{-|q|^2}d\lambda_I)$. More generally, if $n>m$, we have $\overline{\partial}_s^{n-m} (F)=0$ and therefore
	 $$ \scal{\mathcal{H}_{n} (F),\mathcal{H}_{m} (F)} =  \scal{F,\overline{\partial}_s^n\mathcal{H}_{m} (F)} = (-1)^m m! \scal{F,\overline{\partial}_s^{n-m} (F)} =0.$$
	 Thus, $\mathcal{H}_{n} (F)$, $n=0,1,2, \cdots$, form an orthogonal system in $L^2(\C_I;e^{-|q|^2}d\lambda_I)$.
\end{proof}

\begin{remark}  By specifying $F(q)=F_m(q)=q^m$, we recover the quaternionic Hermite polynomials $H^Q_{m,n}$. Indeed,
	$$[\mathcal{H}_{n} (F_m)](q) = (-1)^m e^{|q|^2} \partial^n_s(e^{-|q|^2}q^m)= H^Q_{m,n}(q,\overline{q}).$$
\end{remark}

\begin{theorem}\label{thmHermiteOp}  The following assertions hold true.
	\begin{enumerate}
		\item[(i)] The space $\mathcal{SR}_n^2:= \mathcal{SR}_n  \cap L^2(\C_I;e^{-|q|^2}d\lambda_I)$
is spanned by the polynomials $ H^Q_{j,n}$, $j=0,1,2,\cdots$. Moreover, we have
$$ \mathcal{SR}_n^2 =	\sum_{k=0}^n\mathcal{H}_{k} (\mathcal{SR}_0^2).$$

	\item[(ii)] A function $f$ belongs to $\mathcal{SR}_n^2\cap Ker(\Box_q-nId)$ if and only if there exists some $F\in \mathcal{SR}_0^2$ such that $f=\mathcal{H}_{n} (F)$.
		\end{enumerate}
\end{theorem}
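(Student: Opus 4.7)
The plan is to leverage the structural decomposition of S-polyregular functions from Proposition \ref{propSPDecom} together with a single key computation showing that each creation operator $\mathcal{H}_k$ produces eigenfunctions of $\Box_q$ with eigenvalue $k$.

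For assertion (i), I would first exploit the identity $\mathcal{H}_k(q^m) = H^Q_{m,k}$ recorded in the remark: since $\{q^m\}_{m \geq 0}$ is an orthogonal basis of $\mathcal{SR}_0^2 = \mathcal{F}^2_{slice}$, the image family $\{H^Q_{m,k}\}_{m \geq 0}$ serves as an orthogonal basis of $\mathcal{H}_k(\mathcal{SR}_0^2)$ once boundedness of $\mathcal{H}_k \colon \mathcal{SR}_0^2 \to L^2(\C_I;e^{-|q|^2}d\lambda_I)$ is established using the explicit norms $\|H^Q_{m,k}\|^2 \sim m!\,k!$ from \cite{ElHamyani2018}. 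To obtain the decomposition $\mathcal{SR}_n^2 = \sum_{k=0}^n \mathcal{H}_k(\mathcal{SR}_0^2)$, I would start from the representation $f = \sum_{k=0}^n \bq^k \varphi_k$ given by Proposition \ref{propSPDecom} and substitute the binomial expansion \eqref{spolBinom} into the ansatz $\sum_{k=0}^n \mathcal{H}_k(F_k)$; matching coefficients of $\bq^j$ yields a triangular linear system relating $\{\varphi_j\}$ to $\{F_k\}$ that is solvable recursively from $F_n = (-1)^n \varphi_n$ downwards. The pairwise orthogonality of the subspaces $\mathcal{H}_k(\mathcal{SR}_0^2)$ established in Theorem \ref{HermiteOp0}, combined with the two-sided $L^2$-boundedness of $\mathcal{H}_k$ on monomials, then guarantees that the recovered components $F_k$ inherit square integrability from $f$.

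For assertion (ii), the key calculation is the eigenvalue identity $\Box_q \mathcal{H}_k(F) = k\,\mathcal{H}_k(F)$ for every slice regular $F$. Applying $\overline{\partial_s}$ to the binomial expansion \eqref{spolBinom}, using $\overline{\partial_s} F = 0$, and invoking $\binom{k}{i+1}(i+1) = k\binom{k-1}{i}$, one first obtains the shift formula $\overline{\partial_s} \mathcal{H}_k(F) = -k\,\mathcal{H}_{k-1}(F)$; plugging this into $\Box_q = -\partial_s \overline{\partial_s} + \bq \overline{\partial_s}$ yields $\Box_q \mathcal{H}_k(F) = k(\partial_s - \bq)\mathcal{H}_{k-1}(F) = k\,\mathcal{H}_k(F)$. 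Now given $f \in \mathcal{SR}_n^2 \cap \mathrm{Ker}(\Box_q - n\,\mathrm{Id})$, part (i) provides a decomposition $f = \sum_{k=0}^n \mathcal{H}_k(F_k)$ with $F_k \in \mathcal{SR}_0^2$, so that $(\Box_q - n)f = \sum_{k=0}^n (k-n)\mathcal{H}_k(F_k) = 0$; orthogonality of the summands forces $(k-n)\mathcal{H}_k(F_k) = 0$ for each $k$, and injectivity of $\mathcal{H}_k$ on $\mathcal{SR}_0^2$ (a consequence of $\overline{\partial_s}^k \mathcal{H}_k(F) = (-1)^k k!\,F$) gives $F_k = 0$ for every $k < n$. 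Hence $f = \mathcal{H}_n(F_n)$. The converse is immediate from the eigenvalue identity and the inclusion $\mathcal{H}_n(\mathcal{SR}_0^2) \subset \mathcal{SR}_n^2$ obtained in part (i).

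The main obstacle I anticipate lies in the $L^2$ bookkeeping of part (i): the triangular inversion produces candidate components $F_k$ that a priori lie only in $\mathcal{SR}$, and promoting them to $\mathcal{SR}_0^2$ requires combining the pairwise orthogonality of Theorem \ref{HermiteOp0} with the explicit norm formula for $H^Q_{m,k}$ recorded in \cite{ElHamyani2018}. Once this quantitative control is secured, assertion (ii) follows almost mechanically from the eigenvalue identity and the injectivity of $\mathcal{H}_k$.
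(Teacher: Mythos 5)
Your proposal is correct, and it reaches the theorem by a genuinely different route. For (i), the paper expands $f|_{\C_I}$ directly in the two-index orthogonal basis $H^Q_{j,k}$ of $L^2(\C_I;e^{-|q|^2}d\lambda_I)$, observes that $\alpha_{jk}(I)=0$ for $k>n$ because $f$ is a polynomial of degree $n$ in $\overline{q}$, and regroups the surviving terms as $\sum_{k=0}^n\mathcal{H}_k(F_k)$ with $F_k=\sum_j q^j\alpha_{jk}(I)$; with this route the square-integrability of each $F_k$ is automatic from Parseval, which is precisely the point your triangular inversion of Proposition \ref{propSPDecom} leaves open. Your inversion does produce the same $F_k$ (and is a legitimate alternative), but the cleanest way to close the $L^2$ gap you flag is to expand $f$ in the $H^Q_{j,k}$ basis and read off $\sum_j j!\,|\alpha_{jk}|^2<\infty$ for each fixed $k$ --- i.e.\ to fall back on the paper's argument --- rather than to argue through boundedness of $\mathcal{H}_k$. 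For (ii), the paper simply cites $\Box_q H^Q_{j,k}=k\,H^Q_{j,k}$ from the reference and identifies coefficients in the basis expansion, whereas your ladder identity $\overline{\partial_s}\,\mathcal{H}_k(F)=-k\,\mathcal{H}_{k-1}(F)$, hence $\Box_q\mathcal{H}_k(F)=k\,\mathcal{H}_k(F)$, is a self-contained derivation that recovers the cited eigenvalue property upon taking $F=q^j$; this is arguably an improvement. Your injectivity argument via $\overline{\partial_s}^{\,k}\mathcal{H}_k(F)=(-1)^k k!\,F$ is sound. One small point to make explicit: the orthogonality you invoke between $\mathcal{H}_k(F_k)$ and $\mathcal{H}_{k'}(F_{k'})$ for \emph{different} inputs $F_k\neq F_{k'}$ is slightly stronger than Theorem \ref{HermiteOp0} as stated (which takes the same $F$); it does hold, either by the same adjointness argument or directly from the orthogonality relations \eqref{2}, but it should be recorded.
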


\begin{proof}
Let $f \in \mathcal{SR}_n^2 $ and recall that $H^Q_{j,k}(x+Iy,x-Iy)$ is an orthogonal basis of $L^2(\C_I;e^{-|q|^2}d\lambda_I)$ (see \cite[Theorem 4.2]{ElHamyani2018}). Thus, we can expand $f|_{\C_I}$ as
$$ f|_{\C_I} (x+Iy)= \sum_{j=0}^\infty  \sum_{k=0}^\infty  H^Q_{j,k}(x+Iy,x-Iy)\alpha_{jk}(I) $$
for some quaternionic sequence $\alpha_{jk}(I) \in \Hq$ satisfying the growth condition
$$ \sum_{j=0}^\infty  \sum_{k=0}^\infty  j!k! |\alpha_{jk}(I) |^2 <+\infty. $$
Now since $f$ is a polynomial in $\bq$ of degree $n$ (for $f$ being in $\mathcal{SR}_n$), we conclude that $\alpha_{jk}(I)=0$ for every $k>n$, so that
\begin{align} \label{idVCI}
f|_{\C_I} = \sum_{k=0}^n  \sum_{j=0}^\infty  H^Q_{j,k}(q,\bq)\alpha_{jk}(I) .
\end{align}
Therefore,
\begin{align*}
 f|_{\C_I} & = \sum_{k=0}^n  \sum_{j=0}^\infty  \mathcal{H}_{k} (q^j) \alpha_{jk}(I)
 = \sum_{k=0}^n  \mathcal{H}_{k} \left( \sum_{j=0}^\infty  q^j \alpha_{jk}(I)\right)
 = \sum_{k=0}^n  \mathcal{H}_{k} ( F_k) ,
\end{align*}
where $F_k$ stands for $F_k= \sum\limits_{j=0}^\infty  q^j \alpha_{jk}(I)$, which clearly belongs to $\mathcal{SR}_0^2$.
This completes the proof of $(i)$. To prove $(ii)$, we need only to establish the "only if". Thus, for $f\in \mathcal{SR}_n^2\cap Ker(\Box_q-n)$, we assert that $\Box_q f= nf$ is equivalent to have
$$\sum_{k=0}^n  \sum_{j=0}^\infty  k H^Q_{j,k}(q,\bq)\alpha_{jk}(I) = \sum_{k=0}^n  \sum_{j=0}^\infty  n H^Q_{j,k}(q,\bq)\alpha_{jk}(I)$$
thanks to \eqref{idVCI} combined with $\Box_q H^Q_{j,k}(q,\bq) = k H^Q_{j,k}(q,\bq)$ (see \cite{ElHamyani2018}). By identification, we get $\alpha_{jk}(I)=0$ for every $k\ne n$.
This completes the proof.
	\end{proof}

The following result is a Splitting Lemma for the S-polyregular functions generalizing the standard one for the slice regular functions (see \cite{AlpayDikiSabadini2019-3}).

\begin{proposition}[Splitting lemma for S-polyregular functions]\label{SpLem}
	If $f$ is a S-polyregular function, then for every
	$I\in \mathbb{S}$, and every $J \in \mathbb{S}$ perpendicular to $I$, there are two polyanalytic functions $F$, $G$ : $ \mathbb{C}_I \longrightarrow \mathbb{C}_I$ such that for any $q = x + I y$, we have
	$$ f|_{\C_I} (q) = F(q) + G(q)J .$$
\end{proposition}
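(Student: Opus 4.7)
The plan is to reduce the statement to the already-known splitting lemma for slice regular functions by exploiting the decomposition in Proposition \ref{propSPDecom}. Concretely, given $f\in \mathcal{SR}_n$, I would first write
$$ f(q,\overline{q}) = \sum_{k=0}^{n} \overline{q}^{\,k} \varphi_k(q), $$
with $\varphi_k\in \mathcal{SR}$ for $k=0,1,\ldots,n$. For each fixed $I\in \mathbb{S}$ and each $J\in \mathbb{S}$ perpendicular to $I$, the classical splitting lemma for slice regular functions (invoked here exactly as in \cite{AlpayDikiSabadini2019-3}) then yields, on the slice $\C_I$, a pair of $\C_I$-valued holomorphic functions $F_k,G_k:\C_I\longrightarrow \C_I$ such that
$$ \varphi_k|_{\C_I}(q) = F_k(q) + G_k(q) J. $$

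Next I would restrict the decomposition of $f$ to $\C_I$ and regroup. The key observation is that on $\C_I$ one has $\overline{q}=x-Iy\in \C_I$, and therefore each product $\overline{q}^{\,k} F_k(q)$ and $\overline{q}^{\,k} G_k(q)$ again lies in the commutative subfield $\C_I$. In particular, the noncommutativity issue only shows up through the right factor $J$, and the associativity rule gives $\overline{q}^{\,k}(G_k(q) J) = (\overline{q}^{\,k} G_k(q)) J$. This allows me to set
$$ F(q) := \sum_{k=0}^{n} \overline{q}^{\,k} F_k(q), \qquad G(q) := \sum_{k=0}^{n} \overline{q}^{\,k} G_k(q), $$
and verify directly the identity $f|_{\C_I}(q)= F(q)+G(q) J$.

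Finally I would check that $F$ and $G$ have the desired properties. Both take values in $\C_I$, since they are $\C_I$-linear combinations (with coefficients $\overline{q}^{\,k}\in \C_I$) of $\C_I$-valued holomorphic functions. Polyanalyticity of order $n+1$ is immediate: since $\overline{\partial_I}F_k=\overline{\partial_I}G_k=0$ and $\overline{\partial_I}\,\overline{q}=1$ on $\C_I$, one computes by induction
$$ \overline{\partial_I}^{\,n+1} F = \sum_{k=0}^{n} \frac{k!}{(k-(n+1))!}\, \overline{q}^{\,k-(n+1)} F_k = 0, $$
with the analogous vanishing for $G$, simply because every $k\le n$ produces a trivial factor in the falling factorial.

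I expect no serious obstacle here: the content of the lemma is essentially a packaging statement, and all the analytic work has already been done in proving Proposition \ref{propSPDecom} and in the classical slice splitting lemma. The only point where one has to be a bit careful is the non-commutativity, namely that the factor $J$ must always sit on the right so that the rearrangement $\overline{q}^{\,k}G_k(q)J=(\overline{q}^{\,k}G_k(q))J$ is legal; this is guaranteed by the fact that $\overline{q}^{\,k}G_k(q)\in \C_I$ and that both $F_k$ and $G_k$ are $\C_I$-valued by the classical splitting result.
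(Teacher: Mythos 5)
Your proposal is correct and follows exactly the route the paper indicates: decompose $f=\sum_{k=0}^{n}\overline{q}^{\,k}\varphi_k$ via Proposition \ref{propSPDecom}, apply the classical splitting lemma to each $\varphi_k$, and regroup using that $\overline{q}^{\,k}\in\C_I$ on the slice. The extra care you take with the right-placement of $J$ and the vanishing of $\overline{\partial_I}^{\,n+1}$ on each term is a sound filling-in of details the paper leaves to the reader.
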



\begin{remark} The proof of Proposition \eqref{SpLem} readily follows from Proposition \ref{propSPDecom} and the standard Splitting Lemma (\cite{
		GentiliStoppatoStruppa2013}) for the slice regular functions applied to each component function $\varphi_k$.
It can also be handled using sliceness, as pointed out to us by one of the referees. In
	fact, each slice function $f$ on $\Hq$ (not necessarily regular) can be split as $f|_{\C_I} (x+Iy) = F(x+Iy)+G(x+Iy)J$,
	where $J \perp I$ (see e.g. \cite{GhiloniPerotti2011-5}). Then, $f$ is polyregular of order $n$ if and only if $F$ and $G$ are polyanalytic of order
	$n_F$ and $n_G$, respectively, with $n = \max \{n_F,n_G\}$.
\end{remark}

An analogue of the Identity Principle for the S-polyregular functions can also be obtained. To this end, we begin by recalling the standard one for the slice regular functions on slice domains.

 \begin{definition}[\cite{
 		GentiliStoppatoStruppa2013}]
 	A domain $U \subset \Hq$ such that $U\cap{\mathbb{R}}\ne \emptyset$ is said to be slice, if for every arbitrary $I\in{\mathbb{S}}$ the set $U_I:=U\cap{L_I}$ is a domain of the complex plane $\C_I:=\mathbb{R}+\mathbb{R}I$.
 	\end{definition}

\begin{lemma}[\cite{
		GentiliStoppatoStruppa2013}] \label{StandIP}
	Let $f : U \longrightarrow \mathbb{H}$ be a slice regular function on
	a slice domain $U$. Denote by $Z_f = \{q\in U ; f(q)= 0 \}$ the zero set of $f$. If there
	exists $I \in \mathbb{S}$ such that $\mathbb{C}_I \cap Z_f$ has an accumulation point, then $f\equiv 0$ on $U$.
\end{lemma}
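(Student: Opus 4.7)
The plan is to combine the classical Splitting Lemma for slice regular functions (the $n=0$ case of the splitting provided by Proposition \ref{SpLem}) with the ordinary Identity Principle for holomorphic functions of one complex variable, and then to propagate the resulting vanishing from a single slice to all of $U$ using the slice domain hypothesis.

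First, I would fix the $I \in \Sq$ given by the assumption, so that $\C_I \cap Z_f$ has an accumulation point in $U_I$, and pick any $J \in \Sq$ orthogonal to $I$. The Splitting Lemma produces holomorphic functions $F, G : U_I \to \C_I$ with $f|_{U_I}(q) = F(q) + G(q) J$. Since the decomposition $\Hq = \C_I \oplus \C_I J$ is direct, $f|_{U_I}$ vanishes at a point exactly when both $F$ and $G$ do. By the slice domain hypothesis, $U_I$ is a domain in $\C_I$, so $F$ and $G$ are holomorphic on a connected open set with a zero set having an accumulation point; the one-variable Identity Principle then forces $F \equiv G \equiv 0$, and hence $f \equiv 0$ on $U_I$.

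Second, I would extend this vanishing across all slices. Because $U$ is a slice domain, $U \cap \R$ is a nonempty open subset of $\R$ and therefore contains an open interval $(a,b) \subset U_I$; the previous step gives $f \equiv 0$ on $(a,b)$. For an arbitrary $K \in \Sq$, the slice $U_K$ is again a domain of $\C_K$ that contains $(a,b)$, so $\C_K \cap Z_f \supseteq (a,b)$ has accumulation points in $U_K$. Replaying the splitting-plus-identity argument on the slice $\C_K$ yields $f \equiv 0$ on $U_K$, and taking the union $U = \bigcup_{K \in \Sq} U_K$ delivers $f \equiv 0$ on $U$.

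The main obstacle is conceptual rather than computational: each step relies on a connectedness ingredient supplied by the slice domain hypothesis, namely connectedness of every $U_I$ (needed to bootstrap the holomorphic identity principle) and the existence of a shared real interval in $U$ (needed to link distinct slices). Without either ingredient the zero-propagation fails; indeed, one can produce slice regular functions with rich zero sets on non-slice open sets, so the proof is in essence a careful unpacking of the slice domain definition.
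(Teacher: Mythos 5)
Your argument is correct: it is the standard proof of the Identity Principle for slice regular functions, exactly as in the cited reference \cite{GentiliStoppatoStruppa2013} (split on the distinguished slice, apply the one-variable identity principle to $F$ and $G$, then propagate through the nonempty real trace $U\cap\R$ to every other slice). The paper itself gives no proof of Lemma \ref{StandIP} --- it is quoted as a known result --- so there is nothing to compare against beyond noting that your write-up faithfully reproduces the textbook argument, including the two places where the slice domain hypothesis is genuinely used.
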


This principle is no longer valid for S-polyregular functions as shown by the counterexample $1 -q\bq$.  However, we can provide a weak version of such uniqueness theorem.

\begin{proposition}[Identity Principle for S-polyregular functions]\label{SpolyIP}
	Let $f$ be a S-polyregular function in $\mathcal{SR}_n$ such that $f$ is identically zero on a subdomain $\Omega\subset\C_I$ for some $I\in\Sq$. Then  $f$ is identically zero on the whole $\Hq$.
\end{proposition}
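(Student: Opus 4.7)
The plan is to reduce the statement to the standard Identity Principle for slice regular functions (Lemma \ref{StandIP}) via the polyregular decomposition of Proposition \ref{propSPDecom}, by stripping off the $\bar q$-powers one at a time using the slice Cauchy--Riemann operator $\overline{\partial_I}$.

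Concretely, first I would write $f(q,\bar q)=\sum_{k=0}^{n}\bar q^{k}\varphi_k(q)$ with $\varphi_k\in\mathcal{SR}$ as given by Proposition \ref{propSPDecom}, and restrict to the chosen slice $\C_I$. On $\C_I$, a direct computation with the operator in \eqref{dbar} gives $\overline{\partial_I}\bar z=1$ and $\overline{\partial_I} z=0$ for $z=x+Iy$, and, since each $\varphi_k|_{\C_I}$ is holomorphic with respect to $\overline{\partial_I}$, Leibniz yields
\begin{equation*}
\overline{\partial_I}^{\,n}\bigl(\bar z^{k}\varphi_k|_{\C_I}\bigr)=\frac{k!}{(k-n)!}\,\bar z^{k-n}\varphi_k|_{\C_I}\quad (k\geq n),\qquad =0\quad (k<n).
\end{equation*}
Applying $\overline{\partial_I}^{\,n}$ to the sum therefore collapses everything to the top term and gives $\overline{\partial_I}^{\,n}(f|_{\C_I})=n!\,\varphi_n|_{\C_I}$.

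By hypothesis $f|_{\C_I}\equiv 0$ on the subdomain $\Omega\subset\C_I$, so $\varphi_n|_{\C_I}\equiv 0$ on $\Omega$. Since $\Omega$ is open in $\C_I$, it has accumulation points, and $\varphi_n$ is slice regular on the slice domain $\Hq$; Lemma \ref{StandIP} then forces $\varphi_n\equiv 0$ on all of $\Hq$. Replacing $f$ by $f-\bar q^{n}\varphi_n=\sum_{k=0}^{n-1}\bar q^{k}\varphi_k(q)$, which still vanishes on $\Omega$ and lies in $\mathcal{SR}_{n-1}$, I iterate the argument (applying $\overline{\partial_I}^{\,n-1}$ to extract $\varphi_{n-1}$, then $\overline{\partial_I}^{\,n-2}$, and so on). By a finite descent on the polyregularity order, all the component functions $\varphi_0,\dots,\varphi_n$ vanish identically on $\Hq$, whence $f\equiv 0$ on $\Hq$.

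The main technical point is purely bookkeeping: verifying that $\overline{\partial_I}$ differentiates $\bar z^k\varphi_k|_{\C_I}$ cleanly despite the $\Hq$-valuedness of $\varphi_k|_{\C_I}$. This is automatic because $\overline{\partial_I}$ is a real-linear differential operator acting from the left, so it commutes with the right-multiplication encoded in the coefficients, and the slice regularity of each $\varphi_k$ guarantees $\overline{\partial_I}\varphi_k|_{\C_I}=0$. No appeal to the Splitting Lemma is actually needed; the only external inputs are Proposition \ref{propSPDecom} and Lemma \ref{StandIP}.
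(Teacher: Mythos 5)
Your proposal is correct and follows essentially the same route as the paper's own proof: decompose $f=\sum_{k=0}^{n}\bar q^{k}\varphi_k$ via Proposition \ref{propSPDecom}, apply $\overline{\partial_I}^{\,n}$ on the slice to extract $n!\,\varphi_n$ on $\Omega$, descend through the lower components, and invoke Lemma \ref{StandIP} to globalize each $\varphi_k$. The only (immaterial) difference is bookkeeping order — you globalize and subtract each $\varphi_k$ as you go, whereas the paper first shows all $\varphi_k|_{\Omega}\equiv 0$ and then applies Lemma \ref{StandIP} to each at the end.
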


\begin{proof}
	By Proposition \ref{propSPDecom}, we can write $ f\in \mathcal{SR}_n$ as  $\displaystyle f(q) =\sum_{k=0}^{n}\overline{q}^k\varphi_k(q)$ with $\varphi_k \in \mathcal{SR}$.
	Now, by the assumption that $f|_{\Omega} \equiv 0$ with  $\Omega$ is a subdomain of some slice $\C_I$, we obtain
	$$
	n! \varphi_n|_{\Omega}(x+Iy) = \overline{\partial_I}^{n}
    \left(\sum_{k=0}^{n}(x-Iy)^k\varphi_k|_{\Omega}(x+Iy)\right) \equiv 0 .
	$$
	Repeating this procedure, we conclude that $\varphi_k|_{\Omega} \equiv 0$ for every $k=n,n-1, \cdots,1,0$.
	Therefore, $\varphi_k \equiv 0$ on the whole $\Hq$ by Lemma \ref{StandIP}. This implies that $f\equiv 0$ on $\Hq$.
\end{proof}

\begin{remark}
	Although Proposition \ref{SpolyIP} is Theorem 3.8  in \cite{AlpayDikiSabadini2019-3}, the proof we provide here is different.
\end{remark}

\begin{remark}
	Other powerful uniqueness theorems as well as additional properties for the $S$-polyregular functions can be obtained. They will be the subject of a forthcoming investigation.
\end{remark}

\subsection{Star product for S-polyregular functions.}
The authors of \cite{AlpayDikiSabadini2019-3} have introduced a star product for S-polyregular functions (for completness) without further results on it. In the sequel, we will review this notion and establish some related results. To this end, recall first that the left $\star^L_s$-product for left slice regular functions is defined by
\begin{equation}\label{starProduct}
 (f\star^L_s g) (q) = \sum_{n=0}^\infty q^n \left(\sum_{k=0}^n a_k b_{n-k}\right)
 \end{equation}
for given convergent series $\displaystyle  f (q) = \sum_{n=0}^\infty q^n a_n$ and $\displaystyle  g (q) = \sum_{n=0}^\infty q^n b_n $ on $\Hq$. This is in fact the product of two formal series with coefficients in a ring \cite{Fliess1974}.
The performed series in \eqref{starProduct} is convergent on $\Hq$ and is a slice regular function \cite{GentiliStoppato08}.
This product is introduced to overcome the fact that the pointwise product of  left slice regular functions is not necessarily a left slice regular function, but it is a S-polyregular function under further assumptions (see \cite{Ghanmi2019} for details).
For interesting results on the left $\star^L_s$-product, one can refer to \cite{AltavilladeFabritiis2019-2,
	GentiliStoppatoStruppa2013} and the references therein.  To solve analogue problem in the context of left S-polyregular functions, a natural extension of the $\star^L_s$-product can be defined by considering
\begin{equation}\label{starSpProductL}
 (f\star^L_{sp} g) (q,\bq) = \sum_{j=0,1, \cdots ,m \atop  k=0,1,\cdots,n} \bq^{j+k} (\varphi_j \star^L_s \psi_k) (q)
  \end{equation}
for given $\displaystyle  f (q,\bq) = \sum_{j=0}^m \bq^j \varphi_j(q) \in \srm$ and $\displaystyle  g (q,\bq) = \sum_{k=0}^n \bq^k \psi_k(q) \in \srn $.
We define in a similar way the right star product for right S-polyregular functions
$\displaystyle  f (q,\bq) = \sum_{j=0}^m \varphi_j(q) \bq^j $ and $\displaystyle  g (q,\bq) = \sum_{k=0}^n \psi_k(q) \bq^k$ as follows
\begin{equation}\label{starSpProductR}
 (f\star^R_{sp} g) (q,\bq) = \sum_{j=0,1, \cdots ,m \atop  k=0,1,\cdots,n} ( \varphi_j \star^R_s \psi_k) (q) q^{j+k} .
 \end{equation}
Thus, one can easily check the following

\begin{lemma}\label{commstarp}
	 For every $f\in \srm$ and $g \in \srn $, we have
	\begin{enumerate}
		\item[(i)] $\overline{f\star^L_{sp} g} =  \overline{g} \star^R_{sp} \overline{f}$, where $\overline{f}(q) = \overline{f(q)}$ denotes the algebraic conjugation.
		\item[(ii)] $f\star^L_{sp} g = g\star^L_{sp} f$ if the coefficients of any components slice regular functions $\varphi_j$ and $\psi_k$ commute.
	\end{enumerate}
\end{lemma}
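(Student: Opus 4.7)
Both parts follow by reducing to the analogous identities for the slice-regular $\star^L_s$- and $\star^R_s$-products and then exploiting the bilinearity of the $\bq$-expansions that characterize S-polyregular functions.

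For $(i)$, the plan is to apply the quaternionic involution term-by-term to the series defining $f\star^L_{sp} g$. Using $\overline{ab}=\overline{b}\,\overline{a}$, each summand $\bq^{j+k}(\varphi_j\star^L_s\psi_k)(q)$ becomes $\overline{(\varphi_j\star^L_s\psi_k)(q)}\,q^{j+k}$, so
\[
\overline{f\star^L_{sp} g}\;=\;\sum_{j,k}\overline{(\varphi_j\star^L_s\psi_k)(q)}\,q^{j+k}.
\]
The crux is the slice-regular identity $\overline{\varphi\star^L_s \psi}=\overline{\psi}\star^R_s\overline{\varphi}$ for left slice regular $\varphi(q)=\sum q^n a_n$ and $\psi(q)=\sum q^n b_n$, which is a direct coefficient comparison: the coefficient of $\bq^n$ on the left is $\sum_\ell \overline{a_\ell b_{n-\ell}}=\sum_\ell \overline{b_{n-\ell}}\,\overline{a_\ell}$, and this is precisely the Cauchy coefficient of $\bq^n$ in $\overline{\psi}\star^R_s\overline{\varphi}$. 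Substituting, and observing that $\overline{g}=\sum_k \overline{\psi_k}(q)\,q^k$ and $\overline{f}=\sum_j \overline{\varphi_j}(q)\,q^j$ are exactly the expansions of the conjugates in the form required by the definition of $\star^R_{sp}$, one matches term by term to obtain $\overline{f\star^L_{sp} g} = \overline{g}\star^R_{sp}\overline{f}$.

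For $(ii)$, I would start from the Cauchy expansion $(\varphi_j\star^L_s\psi_k)(q)=\sum_n q^n\sum_\ell a_{j\ell}\,b_{k,n-\ell}$, where $\varphi_j=\sum q^i a_{ji}$ and $\psi_k=\sum q^i b_{ki}$. Under the assumed commutativity of the coefficients $a_{ji}$ and $b_{ki}$, each inner scalar sum is symmetric under swapping the roles of $\varphi_j$ and $\psi_k$, so $\varphi_j\star^L_s\psi_k=\psi_k\star^L_s\varphi_j$. Since $\bq^{j+k}=\bq^{k+j}$, a reindexing of the outer double sum in the definition of $\star^L_{sp}$ then yields $f\star^L_{sp} g=g\star^L_{sp} f$.

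The main subtlety, more bookkeeping than genuine obstacle, lies in $(i)$: one must check that conjugating a component $\psi_k$ of a left S-polyregular function produces the series $\sum_i \overline{b_{ki}}\,\bq^i$ with scalars on the left and $\bq^i$ on the right, and that this is the correct input for the $\star^R_s$-product appearing inside the definition of $\star^R_{sp}$. Once this interpretation is in place, both sides of the identity are assembled from the same Cauchy-convoluted coefficients, and the equality is essentially immediate.
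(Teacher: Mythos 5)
Your proposal is correct and follows the same route as the paper: for (i) the paper likewise conjugates \eqref{starSpProductL} term by term and invokes the slice-regular identity $\overline{\varphi_j \star^L_s \psi_k}=\overline{\psi_k}\star^R_s\overline{\varphi_j}$, and for (ii) it compares the two products directly via the Cauchy coefficients. You merely spell out the coefficient bookkeeping (including the placement of $\bq^i$ in the conjugated components) that the paper leaves implicit.
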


\begin{proof}
	Assertion $(i)$ follows by taking the algebraic conjugate in \eqref{starSpProductL}
and next using the well-established fact $\overline{\varphi_j \star^L_s \psi_k}=\overline{\psi_k} \star^R_s \overline{\varphi_j}$ for slice regular functions $\varphi_j$ and $\psi_k$.
	The second assertion is immediate by comparing $f\star^L_{sp} g$ and $g\star^L_{sp} f$.
\end{proof}
	
	A characterization for two S-polyregular functions to commute with respect to the $\star^L_{sp}$-product can be obtained, generalizing the one given in
	\cite{AltavilladeFabritiis2019-2} for $\C_J$-preserving slice regular functions.

	\begin{definition}[\cite{ColomboGonzGlez-CervantesSabadini2012,AltavilladeFabritiis2019-2}]
		 Let  $J\in \Sq$. A slice regular function $\varphi$ is said to be $\C_J$-preserving if both $F$ and $G$, in its stem function
		 $\varphi =\mathcal{I}(F+i G)$, are $\C_J$-valued.
	\end{definition}

	\begin{definition}	
			A S-polyregular function $f(q,\bq) = \sum\limits_{k=0}^n \bq^k \varphi_k(q) $ is said to be $\C_J$-preserving, for given  $J\in \Sq$, if their components slice regular functions $\varphi_k$ are $\C_J$-preserving.
	\end{definition}

\begin{lemma}\label{CJpreserving}
If $f$ and $g$ are two S-polyregular $\C_J$-preserving functions for given $J\in \Sq$, then $f\star^L_{sp} g = g\star^L_{sp} f$.
\end{lemma}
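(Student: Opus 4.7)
The plan is to reduce the claim to Lemma \ref{commstarp}(ii), which asserts that $f\star^L_{sp} g = g\star^L_{sp} f$ as soon as the Taylor coefficients of the slice regular components of $f$ and $g$ commute pairwise. So it suffices to show that the $\C_J$-preserving hypothesis forces all these coefficients to lie in the commutative slice $\C_J$.

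To this end, I would first establish the basic fact that a slice regular function $\varphi(q) = \sum_{n\ge 0} q^n a_n$ on $\Hq$ is $\C_J$-preserving if and only if $a_n \in \C_J$ for every $n$. The ``if'' direction is immediate: each partial sum lies in $\C_J$ whenever $q \in \C_J$, since $\C_J$ is a commutative subfield of $\Hq$ closed under addition and multiplication, and $\C_J$ is topologically closed. The ``only if'' direction follows by noting that $a_n = \frac{1}{n!}\partial_s^n\varphi(0)$, where $\partial_s\varphi|_{\C_J}$ coincides with the usual holomorphic derivative of $\varphi|_{\C_J}:\C_J\to\C_J$; iterating shows $a_n\in\C_J$. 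Alternatively, this can be read off from the stem-function definition: if $\varphi = \mathcal{I}(F+iG)$ with $F,G$ taking values in $\C_J$, then the Taylor expansion of $F+iG$ at $0$ has coefficients in $\C_J$, and pulling back through $\mathcal{I}$ shows that the $a_n$ are in $\C_J$.

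Given this characterization, in the decompositions $f = \sum_{j=0}^m \bq^j \varphi_j$ and $g = \sum_{k=0}^n \bq^k \psi_k$ each component $\varphi_j,\psi_k$ is $\C_J$-preserving by definition, so all their Taylor coefficients sit in $\C_J\cong\C$; in particular any two of them commute. The hypothesis of Lemma \ref{commstarp}(ii) is therefore satisfied, and the conclusion $f\star^L_{sp} g = g\star^L_{sp} f$ follows. I anticipate that the only point requiring care is the equivalence between the candidate definitions of $\C_J$-preservation (via $\varphi(\C_J)\subseteq\C_J$, via the stem function having $\C_J$-valued components, and via Taylor coefficients in $\C_J$); this is however already standard in the slice regular literature \cite{AltavilladeFabritiis2019-2,ColomboGonzGlez-CervantesSabadini2012}, so no genuinely new analytic argument is needed and the lemma reduces to a transparent application of Lemma \ref{commstarp}(ii).
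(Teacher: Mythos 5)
Your proposal is correct, and its skeleton coincides with the paper's: everything is reduced to the commutation of the slice regular components under the $\star^L_{s}$-product, which then propagates to $\star^L_{sp}$ because $(f\star^L_{sp}g)(q,\bq)=\sum_{j,k}\bq^{j+k}(\varphi_j\star^L_s\psi_k)(q)$ is symmetric in $(f,g)$ once $\varphi_j\star^L_s\psi_k=\psi_k\star^L_s\varphi_j$. The difference is how that component-level commutation is justified. The paper simply cites the known fact that $\C_J$-preserving slice regular functions $\star^L_s$-commute (from \cite{AltavilladeFabritiis2019-2}) and leaves the propagation to $\star^L_{sp}$ implicit; you instead unpack that fact by showing that a $\C_J$-preserving entire slice regular function has all its Taylor coefficients in the commutative subfield $\C_J$ (via $a_n=\tfrac1{n!}\partial_s^n\varphi(0)$, or by reading the stem function on the real axis), and then invoke Lemma \ref{commstarp}(ii). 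Your route is more self-contained and makes the mechanism transparent (the only thing that matters is pairwise commutation of the coefficients, for which membership in $\C_J$ is sufficient but not necessary), at the modest cost of having to verify the equivalence between the stem-function definition of $\C_J$-preservation used in the paper and the Taylor-coefficient characterization; for entire functions this is standard and your sketch of it is sound.
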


\begin{proof} 	
	The proof follows by making use of the fact that for $\C_J$-preserving functions $\varphi$ and $\psi$, the $\star^L_{s}$-product satisfies $\varphi\star^L_{s} \psi= \psi\star^L_{s} \varphi$ (see \cite
	{AltavilladeFabritiis2019-2}).
\end{proof}

As basic example of computation with such $\star^L_{sp}$-product, we explicit the one of the following function
$$ S_k(\bp,p;q,\bq) : = \left( |p-q|^2_{\stackrel{L}{{\star}_{sp}}} \right)^{k \star^L_{sp}},$$
with $|p-q|^2_{\stackrel{L}{{\star}_{sp}}}:= (p-q) \star^L_{sp} \overline{(p-q)}= h_q(p) \star^L_{sp} \overline{h_q(p)}$, where we have set $h_q(p)=p-q$. Namely, we assert the following.

 \begin{lemma}\label{Slaguerre1}
	For every $k=1,2,\cdots $, and $p,q\in\Hq$, we have
	\begin{align}\label{form1} S_k(\bp,p;q,\bq)  = \sum_{j=0}^k (-1)^j \binom{k}{j} \bp^{k-j} h_q^{k\star^L_{s}}(p) \bq^j .
	\end{align}
\end{lemma}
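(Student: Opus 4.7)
The plan is to recognize $S_k$ as a $\star^L_{sp}$-power of a product of two factors that happen to commute under $\star^L_{sp}$, and then expand. First, unwinding the definition of $S_1$ together with \eqref{starSpProductL} gives
$$S_1 = (p-q) \star^L_{sp} (\bp-\bq) = \bp(p-q) - (p-q)\bq,$$
which is exactly the base case $k=1$ of \eqref{form1}, since $h_q^{1\star^L_s}(p)=p-q$.

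Next, I would verify by direct application of \eqref{starSpProductL} that the slice regular factor $(p-q)$ and the polyregular factor $(\bp-\bq)$ commute under $\star^L_{sp}$. The only nontrivial check is that the two expansions $(p-q) \star^L_s (-\bq)$ and $(-\bq) \star^L_s (p-q)$ both produce $|q|^2 - p\bq$, which follows from \eqref{starProduct} together with $q\bq = \bq q = |q|^2$; the remaining terms in the $\star^L_{sp}$ expansion trivially agree because $1 \star^L_s(p-q) = (p-q)\star^L_s 1 = p-q$. Note that Lemma \ref{CJpreserving} does not apply here, since $-q$ is not $\C_J$-preserving for generic $q$, so the commutation is a structural feature of this specific pair that must be checked by hand.

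With this commutation in place, and the associativity of $\star^L_{sp}$ (which, via \eqref{starSpProductL}, is just formal polynomial multiplication in $\bp$ with coefficients multiplied by $\star^L_s$), one obtains the factorization
$$S_k = \bigl[(p-q) \star^L_{sp} (\bp-\bq)\bigr]^{k\star^L_{sp}} = (p-q)^{k\star^L_{sp}} \star^L_{sp} (\bp-\bq)^{k\star^L_{sp}}.$$
On the slice regular side, $\star^L_{sp}$ reduces to $\star^L_s$, so the first factor equals $h_q^{k\star^L_s}(p)$. In the second factor the coefficients of $\bp-\bq$ are constants in $\Hq$, whence its $\star^L_{sp}$-power collapses to the ordinary binomial expansion
$$(\bp-\bq)^{k\star^L_{sp}} = \sum_{j=0}^k (-1)^j \binom{k}{j}\bp^{k-j}\bq^j.$$

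Finally, I would assemble the two factors via \eqref{starSpProductL}, using that right multiplication by the constant $\bq^j$ coincides with $h_q^{k\star^L_s}(p) \star^L_s \bq^j$, to obtain \eqref{form1}. The main obstacle in this plan is the commutativity step: $(p-q)$ and $(\bp-\bq)$ are neither both slice regular nor simultaneously $\C_J$-preserving for generic $q$, so one cannot outsource the argument to a general commutation lemma. An alternative route would be a direct induction on $k$, in which the same bookkeeping reappears through the Pascal identity $\binom{k}{l}+\binom{k}{l-1} = \binom{k+1}{l}$ applied to the promotion $h_q^{k\star^L_s} \to h_q^{(k+1)\star^L_s}$.
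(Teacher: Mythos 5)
Your argument is correct, but it follows a genuinely different route from the paper. The paper proves \eqref{form1} by direct induction on $k$: it computes $S_1$ and $S_2$ explicitly, then expands $S_{k+1}=S_k\star^L_{sp}S_1$ and reindexes using the Pascal identity $\binom{k}{j}+\binom{k}{j-1}=\binom{k+1}{j}$ --- exactly the ``alternative route'' you mention in your last sentence. Your factorization $S_k=(p-q)^{k\star^L_{sp}}\star^L_{sp}(\bp-\bq)^{k\star^L_{sp}}$ is more structural: it isolates where the binomial coefficients come from (the ordinary binomial theorem for the commuting pair $\bp$, $-\bq$ inside the star algebra) at the price of having to justify associativity of $\star^L_{sp}$ and the commutation $(p-q)\star^L_{sp}(\bp-\bq)=(\bp-\bq)\star^L_{sp}(p-q)$, both of which you correctly identify as the nontrivial checks and both of which do hold (I verified your computation that the cross terms each equal $|q|^2-p\bq$). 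One small quibble: your remark that Lemma \ref{CJpreserving} is unavailable is not quite right --- for each fixed $q$ one may take $J=I_q$, and then both $p-q$ and $\bp-\bq$ are $\C_{I_q}$-preserving, so the commutation could in fact be outsourced to that lemma; but since you verify it by hand anyway, this does not affect the validity of your proof. The paper's induction is more self-contained and avoids invoking associativity; yours makes the combinatorial structure of \eqref{form1} transparent and would generalize immediately to star-powers of any product of commuting factors.
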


\begin{proof} The proof can be handled by induction. Indeed, direct computation shows that for $k=1,2$, we have
	$$ S_1(\bp,p;q,\bq) = \bp (p-q) - (p-q)\bq = \bp h_q(p) - h_q(p)\bq $$
and
	\begin{align*}
	S_2(\bp,p;q,\bq) &= \left( \bp h_q(p) - h_q(p)\bq\right) \star^L_{sp} \left( \bp h_q(p) - h_q(p)\bq\right) \\
&= \bp^2 h_q^{2\star^L_{s}}(p)  - \bp h_q^{2\star^L_{s}}(p) \bq - \bp h_q^{2\star^L_{s}}(p)\bq  + h_q^{2\star^L_{s}}(p)\bq^2 \\
	&= \bp^2 h_q^{2\star^L_{s}}(p) - 2 \bp h_q^{2\star^L_{s}}(p) \bq + h_q^{2\star^L_{s}}(p) \bq^2.
	\end{align*}
    Now, assume that \eqref{Slaguerre1} holds true for fixed $k$. Then, we have
		\begin{align*}
		S_{k+1}(\bp,p;q,\bq) &= S_k(\bp,p;q,\bq) \star^L_{sp} S_1(\bp,p;q,\bq)
		\\ &= \sum_{j=0}^k (-1)^j \binom{k}{j}  \left( \bp^{k-j} h_q^{k\star^L_{s}}(p) \bq^j \right)  \star^L_{sp} \left( \bp h_q(p) - h_q(p)\bq\right)
		\\ &= \sum_{j=0}^k (-1)^j \binom{k}{j}   \bp^{k+1-j} h_q^{(k+1)\star^L_{s}}(p) \bq^j
			\\ & \qquad - \sum_{j=0}^k (-1)^j \binom{k}{j}  \bp^{k-j} h_q^{(k+1)\star^L_{s}}(p) \bq^{j+1}.
				\end{align*}
	Making the change of indices in the second sum in the right-hand side and using the well-known identity $\binom{k}{j}  + \binom{k}{j-1}= \binom{k+1}{j}$, we get
	\begin{align*}
S_{k+1}(\bp,p;q,\bq) & =	 \bp^{k+1} h_q^{(k+1)\star^L_{s}}(p)+	(-1)^{k+1}  h_q^{(k+1)\star^L_{s}}(p) \bq^{k+1}
\\ & \qquad +\sum_{j=1}^{k} (-1)^{j}  \left( \binom{k}{j}   + \binom{k}{j-1} \right)  \bp^{k+1-j} h_q^{(k+1)\star^L_{s}}(p) \bq^{j}	
\\& =	\sum_{j=0}^{k+1} (-1)^{j} \binom{k+1}{j}  \bp^{k+1-j} h_q^{(k+1)\star^L_{s}}(p) \bq^{j} 		  .
	\end{align*}
	This competes the proof.
\end{proof}

 Accordingly, it is clear that the following assertions hold true.
	\begin{enumerate}
	\item[(i)] The function $p \longmapsto S_k(\bp,p;q,\bq) $ is left S-polyregular for every fixed $q$.
	\item[(ii)] The function $q \longmapsto S_k(\bp,p;q,\bq) $ is right S-polyregular for every fixed $p$.
	\item[(iii)] We have $ \overline{S_k(\bp,p;q,\bq)} = S_k(q,\bq;\bp,p)$ for every $p,q\in \Hq$.
\end{enumerate}

The next result concerns the function on $\Hq\times \Hq$ defined by
\begin{align}\label{Laguerrestar}
L_{\star n}^{(\gamma, S_1)} (p,q):= L_{\star n}^{(\gamma)}( S_1(\bp,p;q,\bq))=L_{\star n}^{(\gamma)}\left(|p-q|^2_{\stackrel{L}{{\star}_{sp}}}\right),
\end{align}
 where $ L_{\star n}^{(\gamma)}$ is essentially the generalized Laguerre polynomial $ L_{n}^{(\gamma)}$
 (\cite{Rainville71}) but with respect to the $\star^L_{sp}$-product. It will be used to obtain the explicit expression of the reproducing kernels for the S-polyregular Bargmann spaces (see Section \ref{gbs1}).

 \begin{lemma}\label{Slaguerre1}
	The function $L_{\star n}^{(\gamma, S_1)}$ in \eqref{Laguerrestar}
    satisfies the properties $(i)$, $(ii)$ and $(iii)$ above.
\end{lemma}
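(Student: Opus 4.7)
The plan is to reduce everything to the previous lemma (the explicit formula for $S_k$) by viewing $L_{\star n}^{(\gamma,S_1)}$ as a finite real linear combination of the functions $S_k$, and then invoke linearity. More precisely, by the very definition of the generalized Laguerre polynomial and the convention on $\star^L_{sp}$-powers of $S_1$, one has
\begin{equation*}
L_{\star n}^{(\gamma,S_1)}(p,q) \;=\; \sum_{k=0}^{n} \binom{n+\gamma}{n-k} \frac{(-1)^{k}}{k!} \, S_k(\bp,p;q,\bq),
\end{equation*}
since $S_k=(S_1)^{k\star^L_{sp}}$ by definition of $S_k$ in the preceding lemma. Note that the coefficients appearing here are real, hence they commute with any quaternion and, in particular, pass freely through both $\overline{\partial_{I_p}}$, $\overline{\partial_{I_q}}$ and the algebraic conjugation.

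For assertion $(i)$, I would fix $q\in\Hq$. Each $S_k(\bp,p;q,\bq)$ is, by the preceding lemma, left S-polyregular in $p$ (of order at most $k$). Since $\mathcal{SR}_{n}$ is a right $\Hq$-vector space (in particular closed under real linear combinations) and contains $\mathcal{SR}_{k}$ for $k\leq n$, the finite sum above belongs to $\mathcal{SR}_n$ in the variable $p$. Assertion $(ii)$ is proved in exactly the same way by fixing $p$ and using that each $q\mapsto S_k(\bp,p;q,\bq)$ is right S-polyregular.

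For assertion $(iii)$, I would apply the algebraic conjugation termwise, using that the real coefficients $\binom{n+\gamma}{n-k}(-1)^{k}/k!$ are fixed by conjugation, together with the identity $\overline{S_k(\bp,p;q,\bq)}=S_k(q,\bq;\bp,p)$ already established:
\begin{equation*}
\overline{L_{\star n}^{(\gamma,S_1)}(p,q)} = \sum_{k=0}^{n} \binom{n+\gamma}{n-k} \frac{(-1)^{k}}{k!}\, S_k(q,\bq;\bp,p) = L_{\star n}^{(\gamma,S_1)}(q,p).
\end{equation*}
There is no substantial obstacle in the argument; the only point requiring care is the reality of the scalar coefficients, which is what allows them to commute with conjugation and with the slice (poly)regularity operators in each variable separately.
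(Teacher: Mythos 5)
Your proof is correct and follows essentially the same route as the paper: both expand $L_{\star n}^{(\gamma,S_1)}$ as a finite linear combination of the $S_k$'s with real coefficients (your $\binom{n+\gamma}{n-k}$ equals the paper's $\Gamma(\gamma+n+1)/\bigl(\Gamma(n-k+1)\Gamma(\gamma+k+1)\bigr)$) and then transfer properties $(i)$--$(iii)$ termwise from the preceding lemma. The only difference is that you spell out the linearity and conjugation steps that the paper leaves implicit.
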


\begin{proof} The proof readily follows since  $ L_{\star n}^{(\gamma)}$ is a finite linear expansion of the functions $S_k(\bp,p;q,\bq)$ with real coefficients.
	More precisely, we have
	$$
	L_{\star n}^{(\gamma)}( S_1(\bp,p;q,\bq)) =\sum _{k=0}^{n} \frac{\Gamma(\gamma+n+1)}{\Gamma(n-k+1)\Gamma(\gamma+k+1)} \frac{(-1)^{k}}{k!} S_k(\bp,p;q,\bq), $$
	where $\Gamma$ denotes the classical gamma function.
\end{proof}


In the next section, we introduce two classes of infinite dimensional right quaternionic reproducing kernels Hilbert spaces that will be considered as the quaternionic analogue of complex polyanalytic Bargmann spaces.

\section{S-polyregular Bargmann spaces} \label{gbs1}

The well-known analytic Hilbert spaces on the complex plane have been generalized to various contexts such as the quaternion setting (see for example \cite{
	AlpayColomboSabadiniSalomon2014,deFabritiisGentiliSarfatti2018,
	GhiloniPerotti2011-5}). Thus, the idea of generalizing the true-polyanalytic Bargmann spaces (\cite{AbreuFeichtinger2014,GI-JMP2005,Vasilevski2000}) to the slice polyregular case is rather natural.
This is the aim of the present section. To this end, let $\srdun$ denote the space of all convergent series
$$
f(q,\bq) = \sum_{k=0}^n \sum_{j=0}^\infty \bq^k q^j \alpha_{jk}; \quad \alpha_{j,k}\in\Hq ,
$$
on $\Hq$, belonging to the right $\Hq$-vector space
$ \srdn : =  \mathcal{SR}_n \cap L^2(\C_{I_0}, e^{-|\xi|^2}d\lambda)$, for some $I_0\in \Sq$.

The particular case of $n=0$ gives rise to the slice Bargmann space $\mathcal{F}^{2}_{slice}$ considered in \cite{AlpayColomboSabadiniSalomon2014}, for which the monomials $e_m(q):=q^m$ constitute an orthogonal basis. In contrast to what one can think, the monomials $e_{j,k}(q,\bq):=q^j\bq^k$ does not form an orthogonal system in $\srdn$ as showed by
$$\scal{e_{j,0},e_{j+k,k}}_{\C_I}=\norm{e_{j+k}}^2_{\C_I}=\pi (j+k)!.$$
 Thus, motivated by Theorem \ref{thmHermiteOp}, we will make use of the univariate quaternionic Hermite polynomials $ H^Q_{j,k}$, instead of monomials $e_{j,k}$, to describe $\srdn$.

\begin{proposition} A function $f$ belongs to $\srdun$ if and only it can be expanded as follows
	$$ f(q,\bq) = \sum_{k=0}^n\sum_{j=0}^{+\infty} H^Q_{j,k}(q,\bq)\alpha_{j,k}$$
	for some quaternionic constants $\alpha_{j,k}$ satisfying the growth condition
	$$ \sum_{j=0}^{+\infty} j!|\alpha_{j,k}|^2 <+\infty$$
	for every $k=0,1,\cdots,n$.
\end{proposition}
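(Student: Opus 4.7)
The plan is to derive both directions from the orthogonality of the quaternionic Hermite polynomials $H^Q_{j,k}$ in $L^2(\C_I;e^{-|q|^2}d\lambda_I)$, together with the structural decomposition already obtained in Proposition \ref{propSPDecom}. Recall from \cite{ElHamyani2018} (and as invoked in the proof of Theorem \ref{thmHermiteOp}) that $\{H^Q_{j,k}\}_{j,k\ge 0}$ is a complete orthogonal system with $\|H^Q_{j,k}\|^2_{\C_I} = \pi\, j!\,k!$, so $L^2$-convergence is controlled exactly by the growth condition to be established.

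For the sufficiency (``if'') direction, I would start from the given Hermite expansion and invoke Parseval to get
$$\|f\|^2_{\C_I} = \pi \sum_{k=0}^{n} k! \sum_{j=0}^{\infty} j! |\alpha_{j,k}|^2 <+\infty,$$
so $f \in L^2(\C_I;e^{-|q|^2}d\lambda_I)$. Each $H^Q_{j,k}(q,\bq)$ is, by formula \eqref{20}, a polynomial in $\bq$ of degree exactly $k\le n$, hence lies in $\mathcal{SR}_n$ (it is annihilated by $\overline{\partial_s}^{\,n+1}$), and the space $\mathcal{SR}_n$ is closed under $L^2$-limits, so $f\in \srdn$. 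To obtain the specific $\srdun$ form, I would substitute \eqref{20} into the expansion, use the commutativity of $q$ and $\bq$ within the slice $\C_{I_q}$ to rewrite each $q^{m-j}\bq^{n-j}$ as $\bq^{n-j}q^{m-j}$, and rearrange the resulting finite linear combinations of monomials $\bq^k q^j$ to obtain a double series of the prescribed form $\sum_{k=0}^n\sum_{j\ge 0} \bq^k q^j a_{j,k}$.

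For the necessity (``only if'') direction, given $f\in\srdun$, I would expand $f|_{\C_I}$ in the orthogonal basis $\{H^Q_{j,k}\}$ to obtain
$$f|_{\C_I}(q) = \sum_{k=0}^{\infty}\sum_{j=0}^{\infty} H^Q_{j,k}(q,\bq)\,\alpha_{j,k}.$$
Since $f \in \mathcal{SR}_n$, Proposition \ref{propSPDecom} shows $f$ is a polynomial in $\bq$ of degree at most $n$, so comparing $\bq$-degrees (or applying $\overline{\partial_s}^{\,n+1}$ and using the known action of $\overline{\partial_s}$ on $H^Q_{j,k}$) forces $\alpha_{j,k}=0$ for every $k>n$. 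Parseval's identity then yields the growth condition
$$\sum_{j=0}^{\infty} j!|\alpha_{j,k}|^2 \le \frac{1}{\pi\, k!}\|f\|^2_{\C_I} <+\infty$$
for each fixed $k=0,1,\dots,n$.

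The one delicate point, which I would handle explicitly, is showing that the Hermite coefficients $\alpha_{j,k}$ are genuine quaternionic constants rather than $I$-dependent, since membership in $\srdun$ requires a global expansion with constants. This follows from the fact that the change of basis between $\{\bq^k q^j\}$ and $\{H^Q_{j,k}(q,\bq)\}$ is triangular with rational coefficients (read off from \eqref{20}), so the $\alpha_{j,k}$ are explicit rational combinations of the given constants $a_{j,k}$ in the $\srdun$-representation, hence $I$-independent; alternatively, Proposition \ref{SpolyIP} forces uniqueness of any S-polyregular expansion agreeing on one slice. This is the main subtlety; the rest of the argument is a bookkeeping exercise balancing the orthogonality of the Hermite system against the polyanalytic degree constraint.
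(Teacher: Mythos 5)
Your proof is correct and follows essentially the same route as the paper: both directions rest on the explicit change of basis between the monomials $\bq^k q^j$ and the quaternionic Hermite polynomials $H^Q_{j,k}$ (formula \eqref{20} and its inverse, the linearization \eqref{linearize}), combined with the orthogonality relations \eqref{2} and Parseval. The only organizational difference is that the paper proves the ``only if'' direction via the linearization formula and delegates the ``if'' direction to Theorem \ref{thmHermiteOp}, whereas you argue both directions directly; your explicit remark on the $I$-independence of the Hermite coefficients is a worthwhile point that the paper leaves implicit.
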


\begin{proof}
	The direct implication follows making use of \cite[Proposition 3.8]{ElHamyani2018},
	expressing the monomials $\bq^kq^j$ in terms of $H^Q_{r,s}$,
	\begin{align}\label{linearize}
	q^{m}\bq^{n}&= m!n! \sum_{k=0}^{min(m,n)}  \frac{ H^Q_{m-k,n-k}(q,\bq ) }{k!(m-k)!(n-k)!} .
	\end{align}
	The orthogonality
     \begin{align}\label{2}
	\scal{H^Q_{m,n},H^Q_{j,k}}_{\C_I}= \pi m!n! \delta_{m,j}\delta_{n,k}
	\end{align}
	of $H^Q_{r,s}$ shows that the condition $\norm{f}_{\C_I}<+\infty$ becomes equivalent to
	\begin{align*} \norm{f}^2_{\C_I} &= \sum_{k,k'=0}^n\sum_{j,j'=0}^{+\infty} \overline{\alpha_{j,k}} \scal{H^Q_{j,k}, H^Q_{j',k'} }_{\C_I} \alpha_{j',k'}
	\\&= \sum_{k=0}^n\sum_{j=0}^{+\infty} |\alpha_{j,k}|^2 \norm{H^Q_{j,k}}^2.
	\end{align*}
	The argument for obtaining the inverse implication is Theorem \ref{thmHermiteOp}.
\end{proof}

\begin{definition} \label{SRun}
	The right quaternionic vector space $\srdun$, generalizing the slice hyperholomorphic Bargmann space $\mathcal{F}^{2}_{slice}$,
	is called S-polyregular Bargmann space of  first kind and level $n$.
\end{definition}

Another interesting subspace to deal with is the following
$$\srddk:= \left\{  \sum_{j=0}^{+\infty} H^Q_{j,k}(q,\bq) c_j ;  c_j \in \Hq, \quad \sum_{j=0}^{+\infty} j!|c_j|^2 <+\infty
\right\} .$$

\begin{definition} \label{SRdn}
	The right quaternionic vector space $\srddk$ is called here S-polyregular Bargmann space of second kind and (exact) level $k$.
\end{definition}

\begin{theorem}\label{SFntBasis}
	The spaces $\srdun$ and $\srddk$ are Hilbert spaces with orthogonal basis $\{H^Q_{j,k}; k=0,1,\cdots,n; \, j=0,1,\cdots\}$ and  $\{H^Q_{j,k}; , j=0,1,\cdots\}$, respectively. Moreover, we have
	\begin{equation}\label{decomp}
	\srdun = \bigoplus_{k=0}^n \srddk.
	\end{equation}
\end{theorem}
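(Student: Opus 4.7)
The plan is to leverage the orthogonality relation \eqref{2} together with the series characterization established in the preceding proposition, thereby reducing everything to a standard weighted-$\ell^{2}$ argument.

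First, I would observe that \eqref{2} immediately guarantees pairwise orthogonality of the family $\{H^{Q}_{j,k}\}$ in the inner product $\scal{\cdot,\cdot}_{\C_{I}}$. The preceding proposition already supplies, for every $f\in\srdun$, a series expansion $f=\sum_{k=0}^{n}\sum_{j=0}^{\infty}H^{Q}_{j,k}\alpha_{j,k}$ with the quantitative identity
\begin{equation*}
\norm{f}^{2}_{\C_{I}} \;=\; \pi\sum_{k=0}^{n}\sum_{j=0}^{\infty} j!\,|\alpha_{j,k}|^{2}.
\end{equation*}
This identification realizes $\srdun$ as isometrically isomorphic (as a right $\Hq$-module) to the weighted sequence space $\ell^{2}(\{0,\dots,n\}\times\N,\pi j!)$, which is complete. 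Since the coefficient map $f\mapsto(\alpha_{j,k})$ is bijective onto this $\ell^{2}$-space and isometric, completeness of $\srdun$ follows, so it is a quaternionic Hilbert space with orthogonal basis $\{H^{Q}_{j,k}:\,0\le k\le n,\,j\ge 0\}$.

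Next, for $\srddk$, the definition is already phrased in terms of expansions $\sum_{j} H^{Q}_{j,k}c_{j}$ with $\sum_{j}j!|c_{j}|^{2}<\infty$, so the same argument (applied for a single fixed $k$) shows that $\srddk$ is a Hilbert space with orthogonal basis $\{H^{Q}_{j,k}\}_{j\ge 0}$. I should briefly verify that $\srddk\subset\srdn$, which is immediate because each $H^{Q}_{j,k}$ is a polynomial of degree $k$ in $\bq$ (hence S-polyregular of level $k$, and a fortiori of level $n$ for any $n\ge k$ in the sum context), and convergence of the series in $L^{2}(\C_{I};e^{-|q|^{2}}d\lambda_{I})$ is given.

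For the direct sum decomposition \eqref{decomp}, I would combine two observations: on the one hand, \eqref{2} shows that $\srddk$ and $\srddk[k']$ are mutually orthogonal whenever $k\ne k'$ (since all basis elements pair to zero); on the other hand, the series expansion of any $f\in\srdun$ may be regrouped as $f=\sum_{k=0}^{n} f_{k}$, with $f_{k}:=\sum_{j=0}^{\infty} H^{Q}_{j,k}\alpha_{j,k}\in\srddk$. The norm identity above becomes the Pythagorean relation $\norm{f}^{2}_{\C_{I}}=\sum_{k=0}^{n}\norm{f_{k}}^{2}_{\C_{I}}$, which simultaneously proves that the decomposition is unique and that the orthogonal sum on the right is a closed subspace equal to $\srdun$.

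The only delicate point I anticipate is to be careful about the side on which scalars act, since $\Hq$ is noncommutative: orthogonality of $H^{Q}_{j,k}\alpha_{j,k}$ and $H^{Q}_{j',k'}\alpha_{j',k'}$ for $(j,k)\ne(j',k')$ must be checked with the inner product \eqref{spfg}, which is conjugate-linear on the left and right-linear on the right, giving $\overline{\alpha_{j,k}}\scal{H^{Q}_{j,k},H^{Q}_{j',k'}}_{\C_{I}}\alpha_{j',k'}=0$; this is exactly what is needed for both the Parseval identity and the orthogonality of the summands $\srddk$, so no genuine obstacle arises.
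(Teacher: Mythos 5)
Your proposal is correct and follows essentially the same route as the paper: orthogonality of the $H^Q_{j,k}$ from \eqref{2}, the series characterization from the preceding proposition, and the Pythagorean identity \eqref{Pythagor} to obtain the orthogonal decomposition \eqref{decomp}. The only cosmetic difference is that you justify completeness via the isometric identification with a weighted $\ell^2$ sequence space, whereas the paper asserts that the spaces are closed subspaces of $L^{2}(\C_I;e^{-|q|^{2}}d\lambda_I)$; both arguments rest on the same (lightly treated) fact that the relevant $L^2$-limits remain S-polyregular.
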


\begin{proof} As for $n=0$, it is not difficult to see that the considered spaces are closed subspaces of the Hilbert space $L^2(\C_{I};e^{-|q|^2}d\lambda_I)$, and therefore they are right quaternionic Hilbert spaces. Now, for fixed nonnegative integer $k$, the polynomials $H^Q_{j,k}$,  $j=0,1,2,\cdots$, form an orthogonal system with respect to the gaussian measure and generate $\srddk$.
	Their linear independence is equivalent to their completion.
	In fact, for a given $ g = \sum\limits_{j=0}^{+\infty} H^Q_{j,k}  c_j \in \srddk$, the condition that $\scal{f,H^Q_{\ell,k}}=0$, for every $\ell=0,1,2,\cdots$, implies
	that $c_\ell=0$  and therefore $g$ is identically zero on $\Hq$, for
	$	\scal{f,H^Q_{\ell,k}}_{\C_I} = \overline{c_j} \norm{ H^Q_{\ell,k}}^2_{\C_I} .$
	Thus, $\{H^Q_{j,k} , j=0,1,\cdots\}$ is an orthogonal basis of $\srddk$. The assertion that $\{H^Q_{j,k}, k=0,1,\cdots,n, \, j=0,1,\cdots\}$ form  an orthogonal basis of $\srdun$ follows in a similar way. It is also an immediate consequence of \eqref{decomp}.
	The decomposition \eqref{decomp} readily follows since for given $f\in \srdun$, we have $$f = \sum_{k=0}^n\sum_{j=0}^{+\infty} H^Q_{j,k} \alpha_{j,k} = \sum_{k=0}^ng_k ,$$
	where
	$$  g_k:= \sum_{j=0}^{+\infty} H^Q_{j,k} \alpha_{j,k}.$$
	Then, it is clear that $g_k \in \srddk$. In addition, the family $\{g_k, k=0,1,\cdots,n\}$ is orthogonal, since for $k\ne k'$, we have
	$$\scal{g_k,g_{k'}}_{\C_I} =\left( \sum_{j,j'=0}^{+\infty}
	\overline{\alpha_{j,k}}\alpha_{j',k'} \delta_{j,j'}  \norm{H^Q_{j,k}}^2_{\C_I}\right) \delta_{k,k'} =0.$$
	Accordingly, we have $\displaystyle\srdun = \bigoplus_{k=0}^n \srddk$. Moreover,
	\begin{align}\label{Pythagor} \norm{f}^2_{\C_I} = \sum_{k=0}^n\norm{g_k}^2_{\C_I} = \pi \sum_{k=0}^n \sum_{j=0}^{+\infty}
	j!k! |\alpha_{j,k}|^2 .
	\end{align}
\end{proof}

In order to show that the considered Hilbert spaces $\srdun$ and $\srddk$ possess reproducing kernels, we need the following.

\begin{lemma}
	For every fixed $q\in \Hq$, the evaluation map $\delta_{q}f=f(q,\bq)$ is a continuous linear form on the Hilbert spaces $\srdun$ and $\srddk$.
	Moreover, we have
	\begin{equation}\label{ineq}
	|f(q,\bq)|  \leq  \frac{1}{\sqrt{\pi}} e^{ \frac{|q|^2}{2}} \norm{f}_{\C_I} .
	\end{equation}
	for every $f\in \srdun$ and therefore for every   $f  \in\srddk$.
\end{lemma}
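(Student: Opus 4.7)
The plan is to establish the pointwise bound \eqref{ineq}, from which both the (right) linearity (immediate from the definition of $\delta_q$) and the continuity of $\delta_q$ follow at once. The strategy rests on the orthogonal basis expansion given by Theorem \ref{SFntBasis} together with a Mehler-type pointwise evaluation of the diagonal sum $\sum_{j}|H^Q_{j,k}(q,\bq)|^2/(j!k!)$.

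First I would expand any $f\in\srdun$ as $f(q,\bq)=\sum_{k=0}^{n}\sum_{j\geq 0} H^Q_{j,k}(q,\bq)\alpha_{j,k}$ and record from \eqref{Pythagor} that $\pi\sum_{j,k} j!k!|\alpha_{j,k}|^{2}=\norm{f}^{2}_{\C_I}$. Applying the triangle inequality pointwise and then the Cauchy--Schwarz inequality to the double series yields
$$|f(q,\bq)|^{2} \leq \left(\sum_{k=0}^{n}\sum_{j\geq 0}\frac{|H^Q_{j,k}(q,\bq)|^{2}}{j!k!}\right)\cdot\frac{\norm{f}^{2}_{\C_I}}{\pi}.$$
For $f\in\srddk$ the same estimate, with the outer sum restricted to a single index $k$, is what is needed.

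The heart of the proof is therefore the closed-form identity
$$\sum_{j=0}^{+\infty}\frac{|H^Q_{j,k}(q,\bq)|^{2}}{j!k!}=e^{|q|^{2}}$$
for every fixed $k\geq 0$. I would establish it slicewise: formula \eqref{20} shows that on the slice $\C_{I_q}$ containing $q$ the polynomial $H^Q_{j,k}(q,\bq)$ coincides with the complex polyanalytic Hermite polynomial $H_{j,k}(z,\bar{z})$ at $z=x+I_q y$. The identity then reduces to the diagonal evaluation of the classical reproducing kernel $\pi^{-1}e^{z\bar{w}}L^{(0)}_{k}(|z-w|^{2})$ of the true-polyanalytic Bargmann space of level $k$, combined with $L^{(0)}_{k}(0)=1$; this can be imported from \cite{AbreuFeichtinger2014,GI-JMP2005,Vasilevski2000}. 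Alternatively, a direct verification is feasible starting from \eqref{20} and the orthogonality \eqref{2}, by recognising the series as a Mehler-type generating sum and summing.

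The main obstacle is this Mehler-type identity on $\Hq$; once in place, substituting it into the Cauchy--Schwarz estimate and taking square roots delivers \eqref{ineq} and hence the continuity of $\delta_q$ on both spaces. The bound for $\srddk$ is the specialisation of the argument to a single level $k$, and is also a direct consequence of the inclusion $\srddk\subset\mathcal{SR}^{2}_{1,k}$ together with the bound for the first-kind space.
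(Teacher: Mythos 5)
Your overall strategy is exactly the paper's: expand in the orthogonal basis $\{H^Q_{j,k}\}$, apply Cauchy--Schwarz, and invoke the diagonal identity $\sum_{j\geq 0}|H^Q_{j,k}(q,\bq)|^2/(\pi j!k!)=e^{|q|^2}/\pi$ (this is \eqref{sumAbs}, which the paper imports from \cite{BenahmadiG2019itsf}; your slicewise reduction to the diagonal of the true-polyanalytic Bargmann kernel is a legitimate way to obtain it, since on the slice $\C_{I_q}$ the polynomial $H^Q_{j,k}$ coincides with the complex polyanalytic Hermite polynomial). For $\srddk$ this closes the proof and yields \eqref{ineq} with the constant $1/\sqrt{\pi}$ exactly as claimed.

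For $\srdun$, however, your final step does not deliver \eqref{ineq}: after Cauchy--Schwarz over the double sum the bracket equals $\sum_{k=0}^{n}e^{|q|^2}=(n+1)e^{|q|^2}$, so what you actually obtain is $|f(q,\bq)|\leq\sqrt{(n+1)/\pi}\,e^{|q|^2/2}\norm{f}_{\C_I}$. The extra $\sqrt{n+1}$ cannot be removed by a better argument: the sharp constant for point evaluation is $\sqrt{\mathcal{K}_{1,n}(q,q)}$, and by Theorem \ref{thm:RepKernel} together with $L_k(0)=1$ one has $\mathcal{K}_{1,n}(q,q)=\sum_{k=0}^n\mathcal{K}_{2,k}(q,q)=(n+1)e^{|q|^2}/\pi$. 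Concretely, at $q=0$ the function $f=\sum_{k=0}^n(-1)^k(k!)^{-1}H^Q_{k,k}$ satisfies $f(0)=n+1$ and $\norm{f}^2_{\C_I}=\pi(n+1)$, so $|f(0)|=\sqrt{(n+1)/\pi}\,\norm{f}_{\C_I}$, violating \eqref{ineq} for $n\geq 1$. Thus the inequality as stated is correct only for the second-kind spaces (and for $n=0$); for $\srdun$ it must carry the factor $\sqrt{n+1}$. You are in good company here: the paper's own proof passes from $f=\sum_k g_k$ to $|f|^2\leq\sum_k|g_k|^2$, which is the same unjustified suppression of the factor $n+1$. The qualitative conclusion --- continuity of $\delta_q$ on both spaces --- is of course unaffected.
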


\begin{proof}
	Let $g\in \srddk$ such that  $g= \sum\limits_{j=0}^{+\infty}  H^Q_{j,k}  c_{j} $.
	Using the Cauchy-Schwartz inequality and the expression of the square norm of $g$, $\displaystyle \norm{g}^{2}_{\C_I}= \pi k! \sum_{j=0}^{+\infty} j!|c_{j}|^2$, we get
	\begin{equation}\label{inequality}
	|g(q,\bq)|
	\leq \left(\sum_{j=0}^{+\infty} \frac{| H^Q_{j,k}(q,\overline{q}) |^2}{\pi j!k!}\right)^{\frac{1}{2}} \norm{g}_{\C_I} .
	\end{equation}
	The series in the right-hand side of \eqref{inequality} is absolutely convergent on $B(0,r_0)$ for every fixed $r_0$ and is independent of $g$. This follows readily making use of
	the following upper bound (see \cite[Corollary 4.3]{ElHamyani2018}):
	\begin{align}\label{estimate}
	\left| H^Q_{n+k,n}(q,\bq )\right|\leq \dfrac{(n+k)!}{k!}\left| q\right|^{k}  e^{\frac{|q|^2}{2}}.
	\end{align}
	More explicitly, by means of \cite[Eq. (3.8)]{BenahmadiG2019itsf}, we have
	\begin{align}\label{sumAbs}
	\sum_{j=0}^{+\infty} \frac{| H^Q_{j,k}(q,\bq) |^2}{\pi j!k!} = \frac{ e^{ |q|^2 }}{\pi} .
	\end{align}
	This proves that
	\begin{equation}\label{Esimateg}
	|g(q,\bq)|
	\leq \frac{1}{\sqrt{\pi}}  e^{\frac{|q|^2 }{2}} \norm{g}_{\C_I} .
	\end{equation}
	Now, for
	$\displaystyle f\in \srdun$, we have $\displaystyle f=\sum_{k=0}^ng_k$ with
	$g_k \in \srddk$, and therefore, we obtain
	$$ |f(q,\bq)|^2\leq \sum_{k=0}^n|g_k(q,\bq)|^2
	\leq \sum_{k=0}^n \frac{1}{\pi} e^{|q|^2 } \norm{g_k}^2
	\leq \frac{1}{\pi} e^{|q|^2 } \norm{f}^2_{\C_I} $$
	by means of \eqref{Pythagor} and \eqref{Esimateg}. This completes the proof.
\end{proof}

The previous Lemma combined with the quaternionic version of the Riesz' representation theorem \cite[Theorem 1]{TobarMandic2014} ensures the existence of the reproducing kernels  for $\srdun$ and $\srddk$.
The next result gives their explicit expressions in terms of
the Laguerre polynomial $L^{(\gamma)}_{\star n}$ and the special convergent series
$$e_{*}^{[a,b]}:=\sum_{n=0}^{+\infty}\dfrac{a^{k}b^{k}}{k!}.$$

\begin{theorem}\label{thm:RepKernel}
	The reproducing kernels of $\srdun$ and $\srddk$ are given respectively by
	\begin{align}\label{ExpRepKer1}
	\mathcal{K}_{1,n}(p,q)&= \dfrac{1}{\pi} e_{*}^{[\overline{p},q]} {\quad\stackrel{L}{{\star}_{sp}}\quad} L^{(1)}_{\star n}(|p - q|_{\stackrel{L}{{\star}_{sp}}}^2 )
	\end{align}
	and
	\begin{align}\label{ExpRepKer2}
	\mathcal{K}_{2,k}(p,q)&= \dfrac{1}{\pi}e_{*}^{[\overline{p},q]} {\quad\stackrel{L}{{\star}_{sp}}\quad} L_{\star k}(|p - q|_{\stackrel{L}{{\star}_{sp}}}^2 ) .
	\end{align} 	
	Shortly, we have
	$$f(p,\bp) = \scal{\mathcal{K}_{1,n}(p,\cdot),f}_{\C_I}  \quad \mbox{ and } \quad g(p,\bp)= \scal{\mathcal{K}_{2,k}(p,\cdot),g}_{\C_I} $$
	for every $f\in \srdun$ and $g\in \srddk$.
\end{theorem}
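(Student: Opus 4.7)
The plan is to exploit the orthogonal bases built in Theorem \ref{SFntBasis} and write each reproducing kernel as a bilinear series in the quaternionic Hermite basis, then recognise the sum in closed form. Since $\{H^Q_{j,k};\, j\ge 0\}$ is an orthogonal basis of $\srddk$ with squared norm $\pi j!k!$ by \eqref{2}, the standard construction in a right quaternionic reproducing kernel Hilbert space yields the candidate
$$ \mathcal{K}_{2,k}(p,q) = \sum_{j=0}^\infty \frac{H^Q_{j,k}(q,\bq) \, \overline{H^Q_{j,k}(p,\bp)}}{\pi\, j!\, k!}, $$
and the Hilbertian decomposition $\srdun = \bigoplus_{k=0}^n \srddk$ forces $\mathcal{K}_{1,n}(p,q) = \sum_{k=0}^n \mathcal{K}_{2,k}(p,q)$.

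First I would check absolute convergence of the series on compact subsets of $\Hq\times \Hq$: this follows immediately from the upper bound \eqref{estimate} combined with the summation formula \eqref{sumAbs} already exploited in the proof of \eqref{ineq}. Next I would verify the reproducing property $g(p,\bp) = \scal{\mathcal{K}_{2,k}(p,\cdot), g}_{\C_I}$ by expanding $g\in \srddk$ in the basis and applying the orthogonality \eqref{2} term by term; the continuity of the evaluation functional \eqref{ineq} combined with the quaternionic Riesz representation theorem guarantees uniqueness of the kernel, so it is enough to check the identity on the basis elements.

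The main obstacle is identifying the closed form \eqref{ExpRepKer2}. The target is the quaternionic analogue of the classical polyanalytic Fock kernel identity $\sum_j H_{j,k}(z,\bz) \overline{H_{j,k}(w,\bw)} / (j! k!) = e^{z\bw} L_k(|z-w|^2)$. In the present non-commutative setting pointwise products have to be replaced by $\star^L_{sp}$-products: expanding $H^Q_{j,k}$ via \eqref{20}, grouping monomials in the form \eqref{form1} produced in Lemma \ref{Slaguerre1}, and reordering the $\bp$ and $q$ factors with the help of Lemmas \ref{commstarp} and \ref{CJpreserving}, should yield precisely $\tfrac{1}{\pi} e_{*}^{[\overline{p},q]} \star^L_{sp} L_{\star k}(|p-q|^2_{\star^L_{sp}})$. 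This combinatorial-algebraic rearrangement, driven by a star-product Mehler-type identity for the $H^Q_{j,k}$, is the central technical step.

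Finally, \eqref{ExpRepKer1} follows from \eqref{ExpRepKer2} via $\mathcal{K}_{1,n} = \sum_{k=0}^n \mathcal{K}_{2,k}$ and the classical Laguerre identity $\sum_{k=0}^n L_k(x) = L_n^{(1)}(x)$; since the Laguerre coefficients are real and the star-product is $\R$-linear on its second factor, the identity transfers verbatim to its $\star^L_{sp}$-analogue, completing the derivation.
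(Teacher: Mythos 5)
Your setup agrees with the paper in every routine respect: the candidate bilinear series $\mathcal{K}_{2,k}(p,q)=\frac{1}{\pi k!}\sum_{j}H^Q_{j,k}(q,\bq)\,\overline{H^Q_{j,k}(p,\bp)}/j!$, its convergence via \eqref{estimate} and \eqref{sumAbs}, the reproducing property checked on basis elements using \eqref{2}, and the reduction of \eqref{ExpRepKer1} to \eqref{ExpRepKer2} through $\sum_{k=0}^n L_k^{(\gamma)}=L_n^{(\gamma+1)}$. The problem is the step you yourself identify as central: you assert that expanding the $H^Q_{j,k}$ via \eqref{20}, regrouping through \eqref{form1}, and ``reordering the $\bp$ and $q$ factors'' with Lemmas \ref{commstarp} and \ref{CJpreserving} ``should yield'' the closed form, but you never carry this out, and the cited lemmas do not cover it: Lemma \ref{commstarp}(ii) requires commuting coefficients and Lemma \ref{CJpreserving} requires both factors to be $\C_J$-preserving for a common $J$, whereas here the right coefficients of the series in $p$ involve an arbitrary quaternion $q$ and its conjugate, and the sum over $j$ is infinite. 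A direct non-commutative Mehler-type rearrangement is precisely the difficulty the theorem is built around, so leaving it at ``should yield'' is a genuine gap rather than a routine verification.

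The paper avoids this computation altogether. It first establishes the closed form when $p$ and $q$ lie in a common slice $\C_I$ (or $q$ is real), where everything collapses to the known complex bilinear generating identity $\sum_{j}H_{j,k}(z,\bz)H_{k,j}(w,\bw)/(\pi j!k!)=\frac{1}{\pi}e^{\bz w}L_k(|z-w|^2)$ from \cite{BenahmadiG2019itsf}. Then, for fixed non-real $q$, it observes that both $p\longmapsto \mathcal{K}_{2,k}(p,q)$ and $p\longmapsto \frac{1}{\pi}\,e_{*}^{[\bp,q]}\star^L_{sp}L_{\star k}\bigl(|p-q|^2_{\star^L_{sp}}\bigr)$ are left S-polyregular in $p$ and coincide on the slice $\C_{I_q}$, and invokes the weak Identity Principle for S-polyregular functions (Proposition \ref{SpolyIP}) to conclude that they agree on all of $\Hq$. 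This slice-restriction-plus-identity-principle mechanism is the key idea of the proof --- the introduction flags Proposition \ref{SpolyIP} as the essential ingredient --- and it is exactly what your proposal is missing; to keep your route you would have to actually prove the star-product Mehler identity, which is substantially harder than the extension argument the paper uses.
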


\begin{proof}
	For $\srddk$, the computation of $\mathcal{K}_{2,k}(p,q)$ can be done by performing
	$$\mathcal{K}_{2,k}(p,q)=\dfrac{1}{\pi k!}\sum_{j=0}^{+\infty}\dfrac{H^Q_{j,k}(q,\overline{q})H^Q_{k,j}(p,\overline{p})}{j!},$$
	since  $\{H^Q_{j,k}; j=0,1,\cdots\}$ is an orthogonal basis of $\srddk$ (see Theorem \ref{SFntBasis}) and $\overline{H^Q_{k,j}} =H^Q_{j,k}$.
	For real $q=x$ or for $p,q$ belonging to the same slice $\C_I$, the result follows by means of
	\begin{align}\dfrac{1}{\pi j!}\sum\limits_{k=0}^{+\infty} \frac{ H^Q_{j,k}(z,\bz ) H^Q_{k,j}(w,\bw ) }{ k!  }   & =   \dfrac{(-1)^j}{\pi j!} e^{\bz w}   H^Q_{j,j}( z -w, \bz - \bw) \nonumber
	\\&=   \dfrac{1}{\pi}e^{ \bz  w }   L_j(|z -w|^2) \label{sameslice}
	\end{align}
	which is an immediate consequence of Theorem 2.3 in \cite{BenahmadiG2019itsf}, stating that
	$$
	\sum\limits_{j=0}^{+\infty} \frac{ t^j }{j!}  H_{k,j}(z,\bz ) H_{j,k'}(w,\bw )  =
	(-t)^{k'}   H_{kk'} ( z -tw, \bz - \overline{t}\bw) e^{ t \bz w}
	$$
	is true for every $|t|=1$ and $z,w\in \C$, 
	combined with the fact that $ H^Q_{k,k}(\xi,\bar\xi) =  (-1)^k k! L_k(|\xi|^2)$, where $L_k=L_k^{(1)}$ is the classical Laguerre polynomial of degree $k$.
	
	Now, for given fixed non-real $q$, let $I_q$ be in $\Sq$ such that $q\in \C_{I_q}$. The functions
	\begin{align}\label{fcts1} \varphi: p\longmapsto \mathcal{K}_{2,k}(p,q)
	\end{align}
	and
		\begin{align}\label{fcts2}  \psi: p\longmapsto
	\dfrac{1}{\pi}  e_{*}^{[\overline{p},q]}  {\quad\stackrel{L}{{\star}_{sp}}\quad} L_{\star k}(|p - q|_{\stackrel{L}{{\star}_{sp}}}^2 )
	\end{align}
	are clearly S-polyregular by the definition of the ${\star}_{sp}$-product (see \eqref{starSpProductL}) and Lemma \ref{Slaguerre1}. Moreover, they coincide on the slice $\C_{I_q}$ by means of \eqref{sameslice}.
Thus, by invoking the Identity Principle for S-polyregular functions (Proposition \ref{SpolyIP}), we conclude that $\phi\equiv \psi$ on the whole $\Hq$. This holds for arbitrary $q\in\Hq$.
	Therefore, we have
	\begin{align*}
	\mathcal{K}_{2,k}(p,q) =  \dfrac{1}{\pi}e_{*}^{[\bp,q]} {\quad\stackrel{L}{{\star}_{sp}}\quad} L_{\star k}(|p - q|_{\stackrel{L}{{\star}_{sp}}}^2 )
	\end{align*}
	for $p,q\in \Hq$. This completes our check for \eqref{ExpRepKer2}.
	
	To conclude for Theorem \ref{thm:RepKernel}, it suffices to observe that since $\displaystyle\srdun = \bigoplus_{k=0}^n \srddk$, we have
	$$ \mathcal{K}_{1,n}(p,q) = \sum_{k=0}^n  \mathcal{K}_{2,k}(p,q) .$$
	Hence, in virtue of $\displaystyle \sum_{k=0}^n  L^{(\gamma)}_k(x)= L^{(\gamma+1)}_n(x)$ (see \cite[Eq. 12, p. 203]{Rainville71}), we get
	\begin{align*}
	\mathcal{K}_{1,n}(p,q) &= \sum_{k=0}^n  \dfrac{1}{\pi}e_{*}^{[\overline{p},q]}  {\quad\stackrel{L}{{\star}_{sp}}\quad} L_{\star k}(|p - q|_{\stackrel{L}{{\star}_{sp}}}^2 )
\\&	=\dfrac{1}{\pi}e_{*}^{[\overline{p},q]} {\quad\stackrel{L}{{\star}_{sp}}\quad} L^{(1)}_{\star n}(|p - q|_{\stackrel{L}{{\star}_{sp}}}^2 ) .
	\end{align*} 	
\end{proof}

	\begin{remark} 	The restriction of $\mathcal{K}_{2,k}$ to $\C_i\times \C_i$ coincides with the reproducing kernel of the true-polyanalytic Bargmann space \cite{AIM2000JMP,BenahmadiG2019itsf,GI-JMP2005}. Indeed, we have
	\begin{align*}
	\mathcal{K}_{2,k}|_{\C_i\times \C_i}(z,w) =  \dfrac{1}{\pi}e^{\bz w}  L_{k}(|z-w|^2),
	\end{align*}
	so that for $k=0$, we recover the one of the classical Bargmann space $\displaystyle \dfrac{1}{\pi}e^{\bz w} $.
\end{remark}

\begin{remark} The expression of $\mathcal{K}_{1,n}(p,q)$ can be rewritten in the equivalent form
		\begin{align}\label{ExpRepKer1equiv}
	\mathcal{K}_{1,n}(p,q)&= \dfrac{1}{\pi}    L^{(1)}_{\star n}(|p - q|_{\stackrel{L}{{\star}_{sp}}}^2 ) {\quad\stackrel{L}{{\star}_{sp}}\quad} e_{*}^{[\overline{p},q]},
	\end{align}
	thanks to $(ii)$ of Lemma \ref{commstarp}. The same observation holds true for
	$\mathcal{K}_{2,k}(p,q)$.
\end{remark}

\begin{remark} The operator $f \longmapsto P_kf$
	given by $P_kf(p,\bp) = \scal{\mathcal{K}_{2,k}(p,\cdot) , f}_{\C_I}$, defined on the whole $\Hq$, defines a sort of extended orthogonal projection of $L^{2}(\C_I;e^{-|q|^{2}}d\lambda_I)$ onto $\srddk$. More explicitly, it reads
	\begin{align}
	P_kf(p,\bp) &= \dfrac{1}{\pi} \int_{\C_I} \overline{e_{*}^{[\bp,q]} {\quad \stackrel{L}{{\star}_{sp}}\quad } L_{\star k}(|p - q|_{ \stackrel{L}{{\star}_{sp}}}^2 ) } f(q,\bq) e^{-|q|^2}d\lambda_I(q) \label{OrthProj}
	\end{align}
for arbitrary $p\in \Hq$, which we can rewrite also as
		\begin{align}
	P_kf(p,\bp) &= \dfrac{1}{\pi} \int_{\C_I}   L_{\star k}(|p - q|_{\stackrel{R}{{\star}_{sp}}}^2 )  {\quad \stackrel{R}{{\star}_{sp}}\quad } \overline{e_{*}^{[\bq,p]}} f(q,\bq) e^{-|q|^2}d\lambda_I(q) \label{OrthProj}
	\end{align}
	by means of $(ii)$ in Lemma \ref{commstarp}.
	\end{remark}

We conclude this section with the following result giving an orthogonal hilbertian decomposition of the Hilbert space $L^{2}(\C_I;e^{-|q|^{2}}d\lambda_I)$.

\begin{theorem}\label{HilbDecomp}
	We have the following hilbertian decomposition
	\begin{align*}
	L^{2}(\C_I;e^{-|q|^{2}}d\lambda_I)=\bigoplus_{k= 0}^{+\infty} \srddk .
	\end{align*}
\end{theorem}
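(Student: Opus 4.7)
The plan is to use the fact, recalled already in the proof of Theorem \ref{thmHermiteOp}, that the quaternionic Hermite polynomials $\{H^Q_{j,k}(q,\bq);\, j,k=0,1,2,\dots\}$ form an orthogonal basis of the full Hilbert space $L^{2}(\C_I;e^{-|q|^{2}}d\lambda_I)$ (see \cite[Theorem 4.2]{ElHamyani2018}), together with the orthogonality relations \eqref{2}, namely $\scal{H^Q_{j,k},H^Q_{j',k'}}_{\C_I}=\pi j!k!\,\delta_{j,j'}\delta_{k,k'}$.

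First I would observe that, by Theorem \ref{SFntBasis}, each $\srddk$ is a closed right $\Hq$-subspace of $L^{2}(\C_I;e^{-|q|^{2}}d\lambda_I)$ admitting the orthogonal basis $\{H^Q_{j,k};\, j=0,1,\dots\}$. Then, thanks to the orthogonality relation above, the subspaces $\srddk$ and $\mathcal{SR}^2_{2,k'}$ are mutually orthogonal whenever $k\neq k'$, since every basis element of the former is orthogonal to every basis element of the latter. This gives the direct sum $\bigoplus_{k=0}^{+\infty}\srddk$ as an orthogonal subspace of $L^{2}(\C_I;e^{-|q|^{2}}d\lambda_I)$.

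Next I would establish that this direct sum exhausts the whole $L^2$ space. For any $f\in L^{2}(\C_I;e^{-|q|^{2}}d\lambda_I)$, the completeness of the basis $\{H^Q_{j,k}\}_{j,k}$ yields an expansion
\begin{align*}
f|_{\C_I}(x+Iy)=\sum_{k=0}^{+\infty}\sum_{j=0}^{+\infty} H^Q_{j,k}(q,\bq)\,\alpha_{jk}
\end{align*}
converging in the $L^2$ norm, with $\sum_{j,k}\pi j!k!|\alpha_{jk}|^2=\norm{f}^2_{\C_I}<+\infty$. Setting $g_k:=\sum_{j=0}^{+\infty}H^Q_{j,k}(q,\bq)\,\alpha_{jk}$, the partial Bessel estimate $\sum_{j}\pi j!k!|\alpha_{jk}|^2\leq\norm{f}^2_{\C_I}$ ensures that $g_k\in\srddk$ for every $k\geq 0$, and the Pythagorean identity $\norm{f}^2_{\C_I}=\sum_{k=0}^{+\infty}\norm{g_k}^2_{\C_I}$ together with the $L^2$ convergence gives $f=\sum_{k=0}^{+\infty}g_k$.

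The only step requiring some care is ensuring that each $g_k$ actually lies in $\srddk$ rather than merely in its $L^2$-closure; this is handled by Theorem \ref{SFntBasis}, which already identifies $\srddk$ with the closed span of the $H^Q_{j,k}$ under the stated summability condition on the coefficients. Combining orthogonality and completeness then yields the claimed hilbertian decomposition.
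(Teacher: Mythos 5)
Your proof is correct, but it follows a genuinely different route from the paper's. You reduce everything to the completeness of the system $\{H^Q_{j,k}\}_{j,k}$ in $L^{2}(\C_I;e^{-|q|^{2}}d\lambda_I)$ (the result of \cite[Theorem 4.2]{ElHamyani2018} that the paper itself already invokes in the proof of Theorem \ref{thmHermiteOp}), and then simply regroup the basis expansion of an arbitrary $f$ by the index $k$; the mutual orthogonality of the blocks $\srddk$ and the Pythagorean identity do the rest. The paper instead argues by showing that the orthogonal complement of $\bigoplus_{k\geq 0}\srddk$ is trivial: for $f$ in that complement it forms the series $\sum_k t^k\scal{\mathcal{K}_{2,k}(q,\cdot),f}$, sums it in closed form via the generating function of the Laguerre polynomials, and lets $t\to 1^-$ so that the resulting kernel concentrates to a Dirac mass at $q$, forcing $f(q)=0$. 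Your argument is the more elementary and, as written, the more rigorous of the two, since the paper's limiting step producing the $\delta$-function is only sketched (no justification is given for interchanging the sum with the integral for a general $L^2$ function, nor for the approximate-identity limit). What the paper's approach buys is a proof that does not lean on the external completeness theorem at this point and that produces, along the way, an explicit one-parameter family of kernels interpolating the projections $P_k$. The one step you flag yourself --- that each $g_k$ lies in $\srddk$ and not merely in its closure --- is indeed handled by the definition of $\srddk$ together with the coefficient bound $\sum_j j!\,|\alpha_{jk}|^2<+\infty$ inherited from $\norm{f}^2_{\C_I}<+\infty$, so there is no gap there.
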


\begin{proof} Notice first that such decomposition is equivalent to the orthogonal complement of $\displaystyle \bigoplus\limits_{k\geq 0} \srddk$
	in $L^{2}(\C_I;e^{-|q|^{2}}d\lambda_I)$ is $\{0\}$. To this end, we claim that
	\begin{align}\label{iddeltasrk}
	T(t|q):=\int_{\C_I}  \dfrac{1}{(1-t)} \overline{e_{*}^{[\bq,\xi]} {\quad\stackrel{L}{{\star}_{sp}}\quad} \exp\left(-\dfrac{t}{1-t} |q-\xi|_{\stackrel{L}{{\star}_{sp}}}^{2}\right) } f(\xi) e^{-|\xi|^{2}}d\lambda(\xi)=0
	\end{align}
	holds for every $\displaystyle f\in \left(\bigoplus\limits_{k\geq 0} \srddk\right)^\perp$, every  $t\in ]0,1[$ and every fixed $q\in \Hq$.
	In fact, this follows readily making use of the generating function for the Laguerre polynomials (\cite[Eq. (14), p. 135]{Rainville71})
	\begin{align*}
	\sum_{k=0}^\infty  t^{k} L^{(\alpha)}_{k}(\xi) = \frac{1}{(1-t)^{\alpha +1}} \exp\left(\frac{t\xi}{t - 1} \right).
	\end{align*}
	Indeed,
	\begin{align*}
	T(t|q) & =  \int_{\C_I} \overline{e_{*}^{[\bq,\xi]} {\quad\stackrel{L}{{\star}_{sp}}\quad} \left(\sum_{k=0}^{+\infty}  t^{k} L_{\star k}(|w-q|_{\stackrel{L}{{\star}_{sp}}}^{2}) \right)} f(\xi)e^{-| \xi|^{2}}d\lambda(\xi)
	\\ & =	\sum_{k=0}^{\infty} t^{k} \int_{\C_I}  \overline{ e_{*}^{[\bq,\xi]} {\quad\stackrel{L}{{\star}_{sp}}\quad}  L_{\star k}(|\xi-q|_{\stackrel{L}{{\star}_{sp}}}^{2})e^{-|\xi|^{2}}} f(\xi) d\lambda(\xi)
	\\& =	0.
	\end{align*}
	The limit $t\longrightarrow 1^{-}$ in \eqref{iddeltasrk} yields an integral involving the Dirac $\delta$-function at the point $q\in \Hq$.
	From that, the left-hand side of \eqref{iddeltasrk} reduces further to $  e_{*}^{[\bq,\xi]} f(\xi)e^{-|\xi|^{2}}$. Therefore, $f(q)=0$ for every $q\in \Hq$.
\end{proof}

\section{Segal--Bargmann transforms for S-polyregular Bargmann spaces}

In this section, we introduce a family of suitable Bargmann's type transforms defined on the right quaternionic Hilbert space $L^{2}_{\Hq}(\R;dt)$, consisting of all square integrable $\Hq$-valued functions with respect to the inner product
\begin{align*}
\scal{ f,g }_{\R}:=\int_{\R} \overline{f(t)} g(t) dt.
\end{align*}
Their images will be the S-polyregular Bargmann spaces defined and studied in the previous section.
To this end, we define the kernel functions $ B_{\ell,n}(x;q)$, $\ell=1,2$, on $\R\times \Hq$ to be the bilinear generating functions
\begin{align}\label{kernel}
B_{2,k}(x;q) &= \sum_{j=0}^{+\infty}\dfrac{h_{j}(t)\overline{H^Q_{j,k}(q,\bq)}}{\norm{h_{j}}_{\R}\norm{H^Q_{j,k}}_{\C_I}} ,
\end{align}
and
\begin{align}\label{kernel2}
B_{1,n}(x;q) &= \sum_{k=0}^{n} B_{2,k}(x;q) ,
\end{align}
where $h_{j}(t)$ denotes the $j$-{th} real Hermite function \cite{Rainville71} associated to the real Hermite polynomial $H_{j}(t)$,
\begin{align}\label{6}
h_{j}(t)=(-1)^{j}e^{\frac{t^{2}}{2}}\dfrac{d^{j}}{dt^{j}}(e^{-t^{2}}) = e^{-\frac{t^{2}}{2}} H_{j}(t).
\end{align}
We recall that they form an orthogonal basis of $L^2_{\Hq}(\R;dt)$, with square norm given by
\begin{align}\label{1}
\norm{h_{j}}^{2}_{\R}=2^{j}j!\sqrt{\pi}.
\end{align}
Thus, we have

\begin{theorem} \label{thm:SBTn}
	For every $t\in \R$ and $q \in \Hq$, we have
	\begin{align} \label{kernelB2}
	 B_{2,k}(t;q)&:= \left(\dfrac{1}{\pi}\right)^{\frac 34}  \frac{1}{\sqrt{2^{k} k!}}
	\exp\left(-\frac{t^{2}+\bq^2}{2} +\sqrt{2}\bq t\right) H_{k}\left(\frac{q+\bq}{\sqrt{2}}-t\right).
	\end{align}
	Moreover, the function $ B_{2,k;q}:t\longmapsto  B_{2,k}(t;q)$
	belongs to $L^2_{\Hq}(\R;dt)$ for every  fixed $q\in \Hq$, and we have
	\begin{align}\label{5}
	\norm{  B_{2,k;q}}_{\R}=\dfrac{1}{\sqrt{\pi}}e^{\frac{|q|^{2}}{2}}.
	\end{align}
\end{theorem}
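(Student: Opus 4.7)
My approach is to reduce the bilinear series \eqref{kernel} defining $B_{2,k}(t;q)$ to a Gaussian integral over the real line, evaluate it to recover \eqref{kernelB2}, and then integrate the modulus squared to obtain \eqref{5}. Using the norm formulas \eqref{1} and \eqref{2} together with the conjugation identity $\overline{H^Q_{j,k}(q,\overline{q})} = H^Q_{k,j}(q,\overline{q})$ (immediate from \eqref{20}), the kernel rewrites as
\begin{equation*}
B_{2,k}(t;q) = \frac{e^{-t^2/2}}{\pi^{3/4}\sqrt{k!}} \sum_{j=0}^\infty \frac{H_j(t)\,H^Q_{k,j}(q,\overline{q})}{2^{j/2}\,j!}.
\end{equation*}
For fixed $q\in \Hq$, the subalgebra generated by $q$ and $\overline{q}$ is isomorphic to the commutative plane $\C_{I_q}$, and I carry out all remaining computations there.

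Rearranging the double sum in \eqref{20} yields the one-variable generating formula
\begin{equation*}
\sum_{j=0}^\infty \frac{H^Q_{k,j}(q,\overline{q})}{j!}\,w^j = (q-w)^k\, e^{\overline{q}\, w},\qquad w\in\C_{I_q}.
\end{equation*}
Combined with the classical integral representation $H_j(t) = \frac{2^j}{\sqrt{\pi}}\int_\R (t+I_q s)^j e^{-s^2}\,ds$, valid for any imaginary unit $I_q$ since the $I_q$-part of $(t+I_q s)^j$ is odd in $s$, a Fubini-type exchange gives
\begin{equation*}
B_{2,k}(t;q) = \frac{e^{-t^2/2}}{\pi^{5/4}\sqrt{k!}} \int_\R \bigl(q - \sqrt{2}\,(t+I_q s)\bigr)^k\, e^{\sqrt{2}\,\overline{q}\,(t+I_q s)}\, e^{-s^2}\, ds.
\end{equation*}
Completing the square in $s$ produces $-s^2 + \sqrt{2}\,I_q s\,\overline{q} = -(s - I_q\overline{q}/\sqrt{2})^2 - \overline{q}^2/2$; the substitution $u = s - I_q\overline{q}/\sqrt{2}$, together with a contour shift back to $\R$ (justified by analyticity in $\C_{I_q}$ and Gaussian decay) and the identity $-I_q^2 = 1$, collapses the polynomial factor to $2^{k/2}(\alpha - I_q u)^k$ with $\alpha := (q+\overline{q})/\sqrt{2} - t \in \R$. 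A binomial expansion and $\int_\R u^{2m} e^{-u^2}du = \sqrt{\pi}\,(2m)!/(4^m m!)$ identify the remaining integral with $\sqrt{\pi}\,2^{-k}\,H_k(\alpha)$, and collecting prefactors reproduces \eqref{kernelB2}.

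For the norm, since $\alpha\in\R$ the factor $H_k(\alpha)$ is real; using $q+\overline{q} = 2\,\mathrm{Re}(q)$ and $q^2+\overline{q}^2 = 2\,\mathrm{Re}(q^2)$ yields
\begin{equation*}
|B_{2,k}(t;q)|^2 = \frac{1}{\pi^{3/2}\,2^k k!}\, e^{-t^2 - \mathrm{Re}(q^2) + 2\sqrt{2}\,\mathrm{Re}(q)\,t}\,H_k(\alpha)^2.
\end{equation*}
The substitution $u = \sqrt{2}\,\mathrm{Re}(q) - t$ collapses the $t$-dependent exponent to $|q|^2 - u^2$, so the classical orthogonality $\int_\R H_k(u)^2 e^{-u^2}du = 2^k k!\sqrt{\pi}$ gives $\|B_{2,k;q}\|^2_\R = \pi^{-1} e^{|q|^2}$, which is \eqref{5}. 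The main technical obstacle is the middle stage: one must track the $I_q$-powers through the contour shift and verify that they cancel against the $\sqrt{2}$-factors so that the resulting Hermite polynomial has a real argument $\alpha$; the remaining steps reduce to standard complex-variable bookkeeping.
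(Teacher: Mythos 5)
Your proof is correct, and it genuinely departs from the paper's on the first half of the statement. For the closed form \eqref{kernelB2} the paper gives no argument at all: it simply invokes Theorem~5.7 of \cite{ElHamyani2018}. You instead derive it from the defining bilinear series \eqref{kernel}: after reducing to $\sum_j H_j(t)H^Q_{k,j}(q,\bq)/(2^{j/2}j!)$ you combine the generating identity $\sum_j H^Q_{k,j}(q,\bq)w^j/j!=(q-w)^k e^{\bq w}$ (valid for $w\in\C_{I_q}$, where all factors commute) with the Gaussian integral representation of $H_j$, complete the square, shift the contour inside $\C_{I_q}$, and identify the residual moment integral with $\sqrt{\pi}\,2^{-k}H_k(\alpha)$. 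I checked the bookkeeping --- $(I_q\bq)^2=-\bq^2$, the collapse $q-\sqrt{2}(t+I_qs)=\sqrt{2}(\alpha-I_qu)$ after the shift, and the even-moment evaluation --- and the constants come out exactly as in \eqref{kernelB2}; the only points left implicit are the sum/integral interchange and the legitimacy of the contour shift, both routine given the Gaussian decay and the bound \eqref{estimate}. Your route therefore makes the theorem self-contained where the paper's is not, at the cost of a page of computation. For the norm identity \eqref{5} your argument coincides with the paper's: observe that $H_k$ is evaluated at the real argument $\alpha=\sqrt{2}\,\Re(q)-t$, take moduli, complete the square in $t$ to produce $e^{|q|^2-u^2}$, and apply $\int_\R H_k(u)^2e^{-u^2}\,du=2^kk!\sqrt{\pi}$.
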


\begin{proof}
	The explicit expression of the kernel function $ B_{2,k}(t;q)$ can be obtained by \cite[Theorem 5.7]{ElHamyani2018}.
For the second assertion, fix $q=x+Iy$ in $\Hq$ and write the modulus of the kernel function $ B_{2,k}(t;q)$ as
\begin{align*}
|  B_{2,k}(t;q)|^{2}&=\left(\dfrac{1}{\pi}\right)^{\frac 32}  \frac{1}{2^{k}k!}
\left|e^{-\frac{t^{2}}{2}-\frac{x^{2}}{2}+\frac{y^{2}}{2}+Ixy+\sqrt{2} t\bq }\right|^{2}
\left|H_{k}(\sqrt{2}  x -t)\right|^{2}\\
&=\left(\dfrac{1}{\pi}\right)^{\frac 32}  \frac{1}{2^{k}k!} e^{-t^{2}-x^{2}+y^{2}+2\sqrt{2}xt} \left|H_{k}(\sqrt{2}x-t)\right|^{2}.
\end{align*}
Therefore, it follows that
\begin{align*}
\norm{B_{2,k;q}}^{2}_{\R}&=\left(\dfrac{1}{\pi}\right)^{\frac 32}  \frac{1}{2^{k}k!} e^{x^{2}+y^{2}} \int_{\R}e^{-(t-\sqrt{2}x)^{2}}|H_{k}(t-\sqrt{2}x)|^{2}dt
\\&=\left(\dfrac{1}{\pi}\right)^{\frac 32}  \frac{1}{2^{k}k!} e^{|q|^{2}}\int_{\R}e^{-u^{2}}| H_{k}(u)|^{2}du
\\&=\left(\dfrac{1}{\pi}\right)^{\frac 32}  \frac{1}{2^{k}k!} e^{|q|^{2}}\int_{\R} | h_{k}(u)|^{2}du
\\&=\left(\dfrac{1}{\pi}\right)^{\frac 32}  \frac{1}{2^{k}k!} e^{|q|^{2}}\norm{h_{k}}^2_{\R}
\\&= \frac{1}{\pi}e^{|q|^{2}}.
\end{align*}
\end{proof}

\begin{remark}
By comparing \eqref{5} to \eqref{sumAbs}, we conclude that $\norm{ B_{2,k;q}}_{\R}= \sqrt{K_k(q,q)}$ for every $q\in\Hq$.
\end{remark}

Associated to the kernel function $ B_{2,k}$ given through \eqref{kernel} (or also \eqref{kernelB2}), we are able to introduce a unitary integral transform (of Bargmann type) mapping isometrically the configuration space $L^2_{\Hq}(\R;dt)$ onto the constructed S-polyregular Bargmann space $\srddk$.
In fact, we have to consider 
$$[\mathcal{B}_{2,k}\phi](q):= \scal{B_{2,k}(\cdot;q),\phi}_{\R} .$$
 More explicitly,
\begin{align*}
[\mathcal{B}_{2,k}\phi](q)&:  = \left(\dfrac{1}{\pi}\right)^{\frac 34}  \frac{1}{\sqrt{2^{k} k!}}
\int_{\R}e^{-\frac{t^{2}+q^{2}}{2}+\sqrt{2} q t}
H_{k}\left(\frac{q+\overline{q}}{\sqrt{2}}-t\right)\phi(t)dt
\end{align*}
for a given function  $\phi:\R\rightarrow \Hq$, provided that the integral exists.
The following result shows that $\mathcal{B}_{2,k}$ is well-defined on $L^2_{\Hq}(\R;dt)$. Namely, we have

\begin{lemma} \label{thm:isometrySBTm}
For every quaternion $q\in \Hq$ and every $\phi \in L^2_{\Hq}(\R;dt) $, we have
\begin{align*}
\left| [\mathcal{B}_{2,k}\phi](q,\bq)\right|  &\leq \dfrac{1}{\sqrt{\pi}}e^{\frac{|q|^{2}}{2}}\norm{ \phi }_{\R}.
\end{align*}
\end{lemma}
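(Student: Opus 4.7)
The bound is a pointwise estimate for the integral transform $\mathcal{B}_{2,k}$, so the natural strategy is to combine the Cauchy--Schwarz inequality for the inner product $\scal{\cdot,\cdot}_{\R}$ with the explicit computation of $\norm{B_{2,k;q}}_{\R}$ already carried out in Theorem \ref{thm:SBTn}.

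My plan is as follows. First, I would recall that by definition
$$[\mathcal{B}_{2,k}\phi](q) = \scal{B_{2,k}(\cdot;q), \phi}_{\R} = \int_{\R} \overline{B_{2,k}(t;q)}\, \phi(t)\, dt,$$
so that the quantity in question is the quaternionic inner product of the function $B_{2,k;q}: t \longmapsto B_{2,k}(t;q)$ with $\phi$. Applying the Cauchy--Schwarz inequality valid in the right quaternionic Hilbert space $L^2_{\Hq}(\R;dt)$, one obtains
$$\abs{[\mathcal{B}_{2,k}\phi](q,\bq)} = \abs{\scal{B_{2,k;q}, \phi}_{\R}} \leq \norm{B_{2,k;q}}_{\R}\, \norm{\phi}_{\R}.$$
Then substituting the identity $\norm{B_{2,k;q}}_{\R} = \frac{1}{\sqrt{\pi}} e^{|q|^2/2}$ from Theorem \ref{thm:SBTn} (equation \eqref{5}) yields exactly the desired bound. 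As an immediate byproduct, $\mathcal{B}_{2,k}\phi$ is a well-defined quaternionic-valued function on all of $\Hq$.

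The only nontrivial point worth verifying is that the Cauchy--Schwarz inequality is available in this noncommutative setting; this is standard for right quaternionic Hilbert spaces (see e.g. the references in the paper used to justify Riesz' representation theorem), but one may mention it explicitly since the inner product is $\Hq$-valued rather than $\C$-valued. No further calculation is required; the $L^2$-norm of the kernel has already been established in Theorem \ref{thm:SBTn}, so the Lemma follows as a one-line consequence of these two ingredients.
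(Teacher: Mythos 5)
Your proposal is correct and follows essentially the same route as the paper: Cauchy--Schwarz together with the norm computation $\norm{B_{2,k;q}}_{\R}=\pi^{-1/2}e^{|q|^2/2}$ from Theorem \ref{thm:SBTn}. The only cosmetic difference is that the paper first bounds $\abs{\int_{\R}\overline{B_{2,k}(t;q)}\phi(t)\,dt}\leq\int_{\R}\abs{B_{2,k}(t;q)}\abs{\phi(t)}\,dt$ and then applies the scalar Cauchy--Schwarz inequality to the real-valued moduli, which sidesteps the quaternionic-inner-product subtlety you rightly flag as the one point needing justification.
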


\begin{proof}
The proof readily follows by applying the Cauchy-Schwartz inequality. In fact, we obtain
\begin{align}\label{4}
\left| \mathcal{B}_{2,k}\phi(q,\bq)\right|  &\leq  \int_{\R}| B_{2,k}(t;q)|  |\phi(t)| dt\leq \norm{ B_{2,k;q}}_{\R} \norm{\phi}_{\R}.
\end{align}
In view of \eqref{5}, the inequality \eqref{4} reduces further to
\begin{align*}
\left| \mathcal{B}_{2,k}\phi(q,\bq)\right|  &\leq \dfrac{e^{ \frac{|q|^{2}}{2}}}{\sqrt{\pi}} \norm{ \phi }_{\R}.
\end{align*}
\end{proof}

\begin{theorem} \label{thm:isometrySBTm}
The transform $\mathcal{B}_{2,k}$ defines a  Hilbert space isomorphism from $L^2_{\Hq}(\R;dt)$ onto $\srddk$.
\end{theorem}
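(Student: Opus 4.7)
The plan is to exploit the fact that both spaces carry natural orthonormal bases put into one-to-one correspondence by the very definition of the kernel $B_{2,k}$. On the source side, $\{h_j/\norm{h_j}_\R\}_{j\geq 0}$ is an orthonormal basis of $L^2_\Hq(\R;dt)$ by \eqref{1}. On the target side, $\{H^Q_{j,k}/\norm{H^Q_{j,k}}_{\C_I}\}_{j\geq 0}$ is an orthonormal basis of $\srddk$ by Theorem \ref{SFntBasis}. The series expression \eqref{kernel} is precisely the ``correspondence kernel'' between these two bases, so the proof should reduce to applications of Parseval's identity on both sides.

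First I would compute the image of each basis vector. Combining \eqref{kernel}, the reality of the real Hermite functions, and the orthogonality $\scal{h_j,h_m}_\R = \norm{h_j}_\R^2 \delta_{jm}$, a direct term-by-term evaluation yields
$$\mathcal{B}_{2,k}\!\left(\frac{h_m}{\norm{h_m}_\R}\right)(q) = \frac{H^Q_{m,k}(q,\bq)}{\norm{H^Q_{m,k}}_{\C_I}},$$
so $\mathcal{B}_{2,k}$ sends the first orthonormal basis bijectively onto the second. Next, for a general $\phi = \sum_j (h_j/\norm{h_j}_\R)\, c_j \in L^2_\Hq(\R;dt)$ with $c_j \in \Hq$ and $\sum_j |c_j|^2 < \infty$, extending by right $\Hq$-linearity and continuity (the continuity of the evaluation $\phi \mapsto \mathcal{B}_{2,k}\phi(q)$ being provided by the preceding lemma) yields
$$\mathcal{B}_{2,k}\phi(q) = \sum_{j=0}^{+\infty}\frac{H^Q_{j,k}(q,\bq)}{\norm{H^Q_{j,k}}_{\C_I}}\, c_j.$$
Parseval's identity in $\srddk$ then gives $\norm{\mathcal{B}_{2,k}\phi}_{\C_I}^2 = \sum_j |c_j|^2 = \norm{\phi}_\R^2$, proving that $\mathcal{B}_{2,k}$ is an isometry into $\srddk$.

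Surjectivity is then obtained by running the argument backwards: any $f = \sum_j (H^Q_{j,k}/\norm{H^Q_{j,k}}_{\C_I})\, d_j \in \srddk$ is the image under $\mathcal{B}_{2,k}$ of $\phi := \sum_j (h_j/\norm{h_j}_\R)\, d_j$, which lies in $L^2_\Hq(\R;dt)$ precisely because the $\ell^2$-summability of the coefficients $(d_j)$ transfers from one basis to the other. Combined with the isometry just established, this shows that $\mathcal{B}_{2,k}$ is a Hilbert space isomorphism from $L^2_\Hq(\R;dt)$ onto $\srddk$.

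The only non-routine step is the justification of the exchange of sum and integral used to diagonalize $\mathcal{B}_{2,k}\phi$ in the target basis. This is however mild: for each fixed $q$, the defining series \eqref{kernel} converges in $L^2(\R;dt)$ as a function of $t$ — its square norm being $e^{|q|^2}/\pi$ by Theorem \ref{thm:SBTn} combined with \eqref{sumAbs} — so the inner product $\scal{\cdot,\phi}_\R$ can be commuted with the summation by continuity alone, with no appeal to any dominated convergence argument.
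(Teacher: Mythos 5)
Your proposal is correct and follows essentially the same route as the paper: the paper's proof likewise computes $[\mathcal{B}_{2,k}(h_j)](q,\bq)=\left(\tfrac{1}{\pi}\right)^{1/4}\tfrac{\sqrt{2^j}}{\sqrt{k!}}H^Q_{j,k}(q,\bq)$, observes that the orthogonal basis $(h_j)_j$ of $L^2_{\Hq}(\R;dt)$ is carried onto the orthogonal basis $(H^Q_{j,k})_j$ of $\srddk$, and concludes via the continuity supplied by the preceding lemma. You merely make explicit the Parseval/density bookkeeping that the paper leaves implicit.
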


\begin{proof}
	Notice first that the Segal--Bargmann transform $\mathcal{B}_{2,k}$
	maps the orthogonal basis $h_{j}$ of $L^2_{\Hq}(\R;dt)$ to the orthogonal basis $H^Q_{j,k}(q,\bq)$ of the S-polyregular Bargmann space $\srddk$. More precisely, we have
	$$[\mathcal{B}_{2,k}(h_{j})](q,\bq)= \left( \dfrac{1}{\pi}\right)^{\frac 14} \frac{\sqrt{2^j}}{\sqrt{k!}} H^Q_{j,k}(q,\bq).$$
	Then, one can conclude since $\mathcal{B}_{2,k}$ is continuous (by Lemma \ref{thm:isometrySBTm}).
\end{proof}

\section{Spectral realization of the S-polyregular Bargmann spaces}

\subsection{Discussion.}
In this section, we show that the S-polyregular Bargmann space $\srddn$ (and therefore $\srdun$) is closely connected to the concrete $L^2$-spectral analysis of the slice differential operator $\Boxu_{q}$ in \eqref{Laplaceian}.
To this end, we begin by considering the $\mathcal{C}^\infty$-spectral properties of $\Boxu_{q}$ which requires to solve two problems.
The first one is connected to the uniqueness problem of the polar representation $q=re^{I\theta}$ of the slice representation $q=x+I y$, of given $q\in \Hq$.
This can be resolved by restricting $q$ to $\Hqr=\Hq\setminus \R$ and next extend, somehow, the obtained results to the whole $\Hq$.
The second problem is related to the notion of the slice derivative given by \eqref{120} which makes $\Boxu_{q}$ not necessarily elliptic.
To see this, notice that $\partial_s$ can be rewritten in the following unified form
\begin{equation}\label{sliceerUnified}
\partial_s = \frac{1}{2}  \left(\left(1+\chi_{\R}(q)\right)\frac{\partial}{\partial x}
- \left(1-\chi_{\R}(q)\right) I_q \frac{\partial}{\partial y}\right),
\end{equation}
so that the operator $\Boxu_{q}$  reads
\begin{align}\label{LaprealcoordHUnified}
\Boxu_{q} = -\frac{1}{4}& \left\{\left(1+\chi_{\R}(q)\right)^2 \frac{\partial^2}{\partial x^2}
+ \left(1-\chi_{\R}(q)\right)^2 \frac{\partial^2}{\partial y^2} \right\}
\\& +\frac{1}{2} \left(1+\chi_{\R}(q)\right) \left(x\frac{\partial}{\partial x} + y\frac{\partial}{\partial y} \right)
+\frac{I_q}{2}  \left(1-\chi_{\R}(q)\right)\left( x\frac{\partial}{\partial y} - y\frac{\partial}{\partial x} \right). \nonumber
\end{align}
It can be seen as a family of second order differential operators on $\R^2$ labeled by $\Sq$. Accordingly, for every fixed $I_q\in\Sq$, the operator $\Boxu_{q}$ is not elliptic nor uniform elliptic.
However, it is semi-elliptic  
since the eigenvalues of the corresponding matrix
$$  -\frac{1}{4} \left( \begin{array}{cc} \left(1+\chi_{\R}(q)\right)^2  & 0 \\ 0 & \left(1-\chi_{\R}(q)\right)^2  \end{array}\right)  $$
are clearly non-positives (but not necessary negatives).

Accordingly, a spectral realization of the S-polyregular Bargmann spaces, introduced in Section \ref{gbs1}, can be provided. To this end,  
we begin by studying the right eigenvalue problem of $\Boxu_{q}$ in \eqref{LaprealcoordHUnified}
when acting on the $\mathcal{C}^\infty$- as well as on the $L^2$-quaternionic-valued functions on $\Hqr$, and next extend the obtained expressions to the real line.

Notice that $\R$ is a negligible Borel measurable set with respect to the gaussian measure on $\Hq$, and therefore
\begin{align}\label{integration}
\int_{\Hq} f(q) e^{-|q|^2}d\lambda(q) & = \int_{\Hqr} f(q) e^{-|q|^2}d\lambda(q) \nonumber
\\& =  \int_{\R^{+*}\times ]0,2\pi[\times \Sq} f(re^{I\theta}) e^{-r^2} rdrd\theta d\sigma(I_q),
\end{align}
where $dr$ (resp. $d\theta$) denotes the Lebesgue measure on positive real line (the unit circle) and $d\sigma(I)$
stands for the standard area element on $\Sq$. This observation will be used systematically when discussing square integrability of the appropriate  extension on the whole $\Hq$.

\subsection{$\mathcal{C}^\infty$-right-eigenvalue problem.}

Let $\mu$ be a fixed quaternionic number and consider the right eigenvalue problem
$\Boxu_{q}f= f \mu$ for $f$ belonging to the right quaternionic vector space $\mathcal{C}^\infty(\Hq)$ of all quaternionic-valued functions that are $\mathcal{C}^\infty$ on the whole $\Hq\simeq \R^4$.
Thus, associated to $\mu$, we define the $\mathcal{C}^\infty$-eigenspace
\begin{align}\label{smootheigenspace}
\mathcal{E}^\infty_\mu(\Hq,\Boxu_{q}) := \left\{ f\in \mathcal{C}^\infty(\Hq); \,  \Boxu_{q}f= f \mu\right\}.
\end{align}
Notice for instance that $\mathcal{E}^\infty_\mu(\Hq,\Boxu_{q}) $ is not necessarily a  quaternionic right vector space. 
But, it is a $\mathcal{C}_\mu$-right vector space, where $\mathcal{C}_\mu:=\{ p\in \Hq, \, p\mu=\mu p\}$ is  the set of all quaternion numbers commuting with $\mu$. We have $\mathcal{C}_\mu = \Hq$ when $\mu$ is real and $\mathcal{C}_\mu $ is  $\C_\mu$ otherwise.

The first main result of this section concerns the explicit characterization of the elements of $\mathcal{E}^\infty_\mu(\Hq,\Box_{q})$.
Such description involves the Kummer's function defined by  
\begin{align}\label{NewHypergeometricFct}
M\left(  \begin{array}{c} a  \\ c \end{array} \bigg | x \right) = \sum_{j=0}^\infty \frac{(a)_j}{(c)_j} \frac{x^j}{j!} ,
\end{align}
for given $a \in \Hq$ and $x,c\in \R$, $c\ne 0,-1,-2,\cdots$, 
where $(a)_j$ denotes the rising factorial $(a)_j= a(a+1) \cdots (a+j-1)$ with $(a)_0=1$.
Namely, we have

\begin{theorem}\label{thm:CInftydescription}
	A $\mathcal{C}^\infty$-quaternionic-valued function $f$ on $\Hq$ is a solution of $\Boxu_{q}f= f \mu$ on $\Hqr$ if and only if it can be expanded
as
	\begin{align}\label{leftEigenCinfty}
	f(|q|e^{I_q\theta})=\sum_{j\in \Z} q^{(1+\sgn(j))\frac {|j|}2}\bq^{(1-\sgn(j))\frac {|j|}2}  \,
	M\left(  \begin{array}{c} -\mu-(1-\sgn(j))\frac j2 \\ |j|+1   \end{array} \bigg | |q|^{2} \right) \gamma_{\mu,j}^I
	\end{align}
	for some quaternionic-valued functions $I_q \mapsto \gamma^{I}_{\mu,j}$ on $\Sq$ with values in $\mathcal{C}_\mu$. Here $\sgn$ denotes the signum function.
\end{theorem}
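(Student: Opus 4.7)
The strategy is to reduce the PDE to a family of one-dimensional ODEs on each slice by Fourier decomposition, and then to identify the resulting radial equation as a quaternionic Kummer confluent hypergeometric equation.

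First, I would fix $I\in\Sq$ and write the restriction of $\Boxu_{q}$ to the slice $\C_I$ in polar coordinates $q=re^{I\theta}$. Identifying $\overline{\partial_s}$ with the Wirtinger operator $\partial_{\bar{z}}$ on $\C_I$, one has $-\partial_s\overline{\partial_s}=-\tfrac14\Delta$, while $\bq\,\overline{\partial_s}f = \tfrac{r}{2}\partial_r f + \tfrac{I}{2}\partial_\theta f$ thanks to $\bq\,e^{I\theta}=r$. This recovers the polar form of \eqref{LaprealcoordHUnified} on $\Hqr$. I would then Fourier expand $f|_{\C_I}(re^{I\theta})=\sum_{j\in\Z}e^{Ij\theta}a_j(r,I)$. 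Using $Ie^{Ij\theta}=e^{Ij\theta}I$, the angular term becomes $\tfrac{I}{2}\partial_\theta(e^{Ij\theta}a_j)=-\tfrac{j}{2}e^{Ij\theta}a_j$, and matching Fourier modes in the right-eigenvalue equation $\Boxu_{q} f = f\mu$ yields the radial ODE
\begin{equation*}
-\tfrac14 a_j''(r)-\tfrac{1}{4r}a_j'(r)+\tfrac{j^2}{4r^2}a_j(r)+\tfrac{r}{2}a_j'(r)-\tfrac{j}{2}a_j(r)=a_j(r)\mu.
\end{equation*}

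Second, I would solve this ODE by two successive reductions. The ansatz $a_j(r)=r^{|j|}h_j(r)$ annihilates the $r^{-2}$ term (since $|j|^2=j^2$), and the rescaling $s=r^2$, $h_j(r)=H_j(s)$, then converts the ODE into
\begin{equation*}
sH_j''(s)+(|j|+1-s)H_j'(s)+H_j(s)\Bigl(\mu+\tfrac{j-|j|}{2}\Bigr)=0,
\end{equation*}
which is Kummer's confluent hypergeometric equation with parameters $a_j=-\mu-\tfrac{1-\sgn(j)}{2}j$ (using the identity $\tfrac{j-|j|}{2}=\tfrac{1-\sgn(j)}{2}j$) and $c=|j|+1$. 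A direct termwise calculation would show that the quaternionic Kummer series $M(a_j;|j|+1;s)\gamma$ solves this equation for every $\gamma\in\mathcal{C}_\mu$; the key ingredients are that the rising factorial $(a_j)_m$ is a polynomial in $a_j$ and therefore commutes with $a_j$, and that $\gamma\mu=\mu\gamma$ allows $\gamma$ to pass through the coefficients of the series. The $\mathcal{C}^\infty$-at-$s=0$ condition rules out the logarithmic/singular companion solution, so this is the full regular solution space.

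Finally, I would reassemble the pieces. On $\Hqr$ one has $r^{|j|}e^{I_qj\theta}=q^j$ for $j\ge 0$ and $=\bq^{|j|}$ for $j<0$, both captured uniformly by $q^{(1+\sgn(j))|j|/2}\bq^{(1-\sgn(j))|j|/2}$. Summing over $j\in\Z$ with constants $\gamma^I_{\mu,j}\in\mathcal{C}_\mu$ that may depend on the slice $I_q$ produces the announced expansion; the converse implication follows by reversing the calculation mode-by-mode. I expect the main obstacle to be the non-commutative bookkeeping: the ordering of $\mu$, $a_j$, the slice unit $I$, and the coefficients $\gamma^I_{\mu,j}$ must be tracked carefully throughout, and in particular the restriction $\gamma^I_{\mu,j}\in\mathcal{C}_\mu$ is precisely what is needed for $M(a_j;c;\cdot)\gamma^I_{\mu,j}$ to solve a right-eigenvalue equation, reflecting the fact that $\mathcal{E}^\infty_\mu(\Hq,\Boxu_{q})$ is a right $\mathcal{C}_\mu$-module rather than a right $\Hq$-module.
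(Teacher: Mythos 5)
Your proposal is correct and follows essentially the same route as the paper: restriction to $\Hqr$, polar coordinates on each slice, Fourier decomposition in $\theta$, the substitution $a_j(r)=r^{|j|}b_j(r^2)$ leading to Kummer's confluent hypergeometric equation with parameters $a=-\mu-(1-\sgn(j))\tfrac{j}{2}$ and $c=|j|+1$, and exclusion of the Tricomi (logarithmic) companion solution by smoothness at the origin. Your added remark on the non-commutative bookkeeping --- that $(a_j)_m$ commutes with $a_j$ and that $\gamma^I_{\mu,j}\in\mathcal{C}_\mu$ is exactly what lets the right coefficient pass through the series --- is a welcome detail the paper leaves implicit, but it does not change the argument.
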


\begin{proof} Let $f: \Hq \longrightarrow \Hq$ be a  $\mathcal{C}^\infty$-quaternionic-valued function which is solution of $\Boxu_{q}f= f \mu$ on $\Hqr$. Then, $\widetilde{f}=f|_{\Hqr}$ satisfies $\Boxd_{q}\widetilde{f}= \widetilde{f} \mu$, where $\Boxd_{q} $ denotes the restriction
	of the slice differential operator $\Boxu_{q} $ in \eqref{Laplaceian} to $\Hqr$.
	It takes the form
	\begin{equation}\label{LaprealcoordH1}
	\Boxd_{q} = -\frac{1}{4} \left( \frac{\partial^2}{\partial x^2} + \frac{\partial^2}{\partial y^2} \right)
	+ \frac{1}{2} \left(x\frac{\partial}{\partial x} + y\frac{\partial}{\partial y} \right)  +
	\frac{I}{2} \left( x\frac{\partial}{\partial y} - y\frac{\partial}{\partial x} \right).
	\end{equation}
	Its expression in polar coordinates $q=re^{I\theta}$, with $r>0$,
	$0\leq \theta \leq 2\pi$ and $I\in \mathbb{S}$, reads
	\begin{equation*}\label{1pc}
	\Boxd_{q}=-\dfrac{1}{4}\left(\dfrac{\partial^{2}}{\partial r^{2}}+\left[\dfrac{1}{r}-2r\right]\dfrac{\partial}{\partial r}+\dfrac{\partial^{2}}{r^{2} \partial \theta^{2}}-2I\dfrac{\partial}{\partial \theta}\right)
	\end{equation*}
	and its action, on any $\mathcal{C}^\infty$ function $(r,\theta) \longrightarrow e^{I j\theta}  a^{I}_{j}(r)$ on $ [0,2\pi[\times  [0,+\infty[$, is given by
	\begin{equation}\label{action}
	\Boxd_{q}( e^{I j\theta}  a^{I}_{j}(r)) =-\dfrac{e^{I n\theta}}{4r^2} \left[r^2\dfrac{\partial^{2}}{\partial r^{2}}+ (1-2r^2)r \dfrac{\partial}{\partial r}
	+ (2jr^2 -j^2)\right]   a^{I}_{j}(r).
	\end{equation}
	Since $\widetilde{f}\in\mathcal{C}^{\infty}(\Hqr)$ and its restriction $\widetilde{f}(re^{I\theta})$ to $\C_I$ is in addition periodic with respect to $\theta$, one can expand it in Fourier series as 
	\begin{equation}\label{3}
	\widetilde{f}(re^{I\theta})=\sum_{j\in \Z}e^{I j\theta} a^{I}_{j}(r),
	\end{equation}
	where the functions $(r,I) \longmapsto a^{I}_{j}(r)$ are $\mathcal{C}^{\infty}$ on $[0,+\infty[\times \Sq$.
	Therefore, by inserting \eqref{3} in the right-eigenvalue problem $\Boxd_{q} \widetilde{f} = \widetilde{f} \mu$ taking into account \eqref{action},
    we see that
    \begin{equation}\label{1a}
	\left[r^2\dfrac{\partial^{2}}{\partial r^{2}}+ (1-2r^2)r \dfrac{\partial}{\partial r}
	+ (2jr^2 -j^2)\right]   a^{I}_{j}(r) =    -  4r^2 a^{I}_{j}(r)  \mu
	\end{equation}
	holds for every integer $j$ and fixed $r$ and $I$.
	Now, by the changes of variable $t=r^2>0$ and of function $a^{I}_{j}(r) = t^\alpha b_{j}(t,I)$, we get
   \begin{equation}\label{1b}
	t b_{j}^{''}(\cdot,I)  + (2\alpha +1 -t) b_{j}^{'}(\cdot,I)   + \dfrac{1}{t} \left(\alpha-\dfrac{j}{2}\right)\left(\alpha+\dfrac{j}{2} -t\right) b_{j}(\cdot,I) = - b_{j}(\cdot,I) \mu .
	\end{equation}
	For the ansatz $\alpha=|j|/2$, we recognize the left-quaternionic version of the confluent hypergeometric differential equation 
	\begin{equation}\label{6eq}
	t b_{j}^{''}(\cdot,I) + (c -t) b_{j}^{'}(\cdot,I)    =  b_{j}(\cdot,I) a
	\end{equation}
	satisfied by $b_{j}(\cdot,I) $ on $]0,+\infty[$, with $c = |j| +1$ and $a= - \mu - j \chi_{\Z^-}(j)=- \mu  - \left(1-\sgn(j)\right)\frac j2  \in \Hq$.
	Its first solution is given by the Kummer's function
	$ M\left(  \begin{array}{c} a  \\ c \end{array} \bigg | t \right)$, for $c=|j|+1$ being a positive integer. The second (linearly independent) solution is given by the Tricomi's logarithmic function \cite[p. 21]{Tricomi1960} (see also \cite[p. 504]{AbStegun1972})
	\begin{align*}
	U\left(  \begin{array}{c} a  \\ |j|+1 \end{array} \bigg | t \right) & := \frac{(|j|-1)!}{\Gamma(a)} S^a_j(t) + \frac{(-1)^{|j|+1}}{|j|! \Gamma(a-|j|)}
	\left\{ M\left(  \begin{array}{c} a  \\ |j|+1 \end{array} \bigg | t \right) \ln t \right. \\
	& + \left. \sum_{k=0}^{+\infty} \frac{(a)_k}{(|j|+1)_k}(\psi(a+k)-\psi(1+k)-\psi(|j|+1+k)) \frac{t^{k}}{k!}\right\} ,
	\end{align*}
	where $\psi(x)$ denotes the logarithmic derivative of the gamma function, $\psi(x)=\Gamma'(x)/\Gamma(x)$, and $S^a_j(t)$ is the finite sum given by
	$$ S^a_j(t):= \sum_{k=0}^{+\infty}\frac{(a-|j|)_k}{(1-|j|)_k}\frac{t^{k-|j|}}{k!}$$
	and interpreted as $0$ when $j=0$.
	 Thus, the only solution of \eqref{6eq} that can be extended to a A $\mathcal{C}^\infty$ function at 
	$t=0$ is given by  
    $$ b_{j}(t,I)=  M\left(   \begin{array}{c} -\mu - \left(1-\sgn(j)\right)\frac j2\\ |j|+1 \end{array} \bigg | t \right)
	\gamma_{\mu,j}^I$$
	for some quaternionic constants $\gamma_{\mu,j}^I\in \mathcal{C}_\mu$ (viewed as functions on $\Sq$).
	Therefore, the corresponding $f$, whose restriction to $\Hqr$ are solutions of the right-eigenvalue problem $\Boxd_{q} \widetilde{f} = \widetilde{f} \mu$, are given by
	\begin{align*}
	f(re^{I\theta})=\sum_{j\in \Z} r^{|j|}e^{jI\theta}\, M\left(  \begin{array}{c}  -\mu - \left(1-\sgn(j)\right)\frac j2\\ |j|+1 \end{array} \bigg | r^{2} \right) \gamma_{\mu,j}^I.
	\end{align*}
	They can rewritten as in \eqref{leftEigenCinfty}.
	Such expression is well-defined as a $\mathcal{C}^\infty$ function on the whole $\Hq$.
\end{proof}

\begin{remark}
	The extension of the solution of differential equation \eqref{6eq} at the regular singular point $0$ corresponds to the extension of the solution 
	of the right-eigenvalue problem $\Boxd_{q} f = f \mu$ on $\Hqr$ to the whole $\Hq$.
\end{remark}

\begin{remark}
	The quaternionic $\mathcal{C}_\mu$-right-vector space $\mathcal{E}^\infty_\mu(\Hq,\Boxu_{q})$ is generated by the functions
	\begin{align}\label{elementaryFctmucst}
	\psi_{\mu,j}(q)  := q^{(1+\sgn(j))\frac {|j|}2}\bq^{(1-\sgn(j))\frac {|j|}2}  \,
	M\left(  \begin{array}{c} -\mu-(1-\sgn(j))\frac j2 \\ |j|+1   \end{array} \bigg | |q|^{2} \right)
	\end{align}
for varying $j\in \Z$.	The expansion of any $f\in \mathcal{E}^\infty_\mu(\Hq,\Boxu_{q})$ in terms of $\psi_{\mu,j}(q)$ involves slice right coefficients $\gamma_{\mu,j}^I \in \mathcal{C}_\mu$.
\end{remark}

\subsection{$L^2$-right-eigenvalue problem.}
In the sequel, we are interested in giving a concrete description of $L^2$-eigenspaces of the right-eigenvalue problem $\Boxu_{q}f= f \mu$.
To this end, we define
\begin{equation}\label{muL2eigen}
\mathcal{F}_{\mu}^{2}:=  \left\{ f\in L^{2}(\Hq;e^{-|q|^{2}}d\lambda); \,  \Boxu_{q} f = f \mu \right\},
\end{equation}
as well as
\begin{equation}\label{muL2eigend}
\widetilde{\mathcal{F}_{\mu}^{2}} :=  \left\{ f\in L^{2}(\Hqr;e^{-|q|^{2}}d\lambda); \,  \Boxd_{q} \widetilde{f} = \widetilde{f} \mu \right\},
\end{equation}
where $L^{2}(\Hq;e^{-|q|^2}d\lambda)$ denotes the right quaternionic Hilbert space of all quaternionic-valued square integrable functions on $\Hq$ with respect to the inner product
\begin{align}\label{SP-full}
\scal{ f, g}_{\Hq} : =\int_{\Hq}\overline{f(q)}g(q) e^{-|q|^{2}}d\lambda(q)
\end{align}
with $d\lambda(q)= dx_0dx_1dx_2dx_3$ being the Lebesgue measure on $\Hq\simeq \R^4$. We define in a similar way $L^{2}(\Hqr;e^{-|q|^2}d\lambda)$ and $\scal{ \widetilde{f} , \widetilde{g} }_{\Hqr}$.
Thus, the following lemmas are fundamentals for our investigation of the $L^2$-eigenspaces $\mathcal{F}_{\mu}^{2}$.

\begin{lemma} With the same notations as above, we have the following results.
	\begin{enumerate}
		\item [(i)] It holds
		 $$ Spec_{L^{2}(\Hq;e^{-|q|^2}d\lambda)}(\Boxu_{q}) \subset Spec_{L^{2}(\Hqr;e^{-|q|^2}d\lambda)}(\Boxd_{q}),$$
		 where $Spec$ denotes the spectrum of the prescribed operator.
		\item [(ii)] The space $\mathcal{F}_{\mu}^{2}$ is a $L^2$-subspace of $\mathcal{E}^\infty_\mu(\Hqr,\Boxd_{q})$ and we have
		\begin{equation}\label{mu}
		\mathcal{F}_{\mu}^{2}\subset \widetilde{\mathcal{F}_{\mu}^{2}} =  L^{2}(\Hqr;e^{-|q|^{2}}d\lambda) \cap \mathcal{E}^\infty_\mu(\Hqr,\Boxd_{q}).
		\end{equation}
	\end{enumerate}
\end{lemma}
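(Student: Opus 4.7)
The plan is to exploit the elementary but crucial observation already highlighted in \eqref{integration}, namely that $\R$ is a $d\lambda$-null subset of $\Hq\simeq\R^4$. Consequently the restriction map $f\mapsto f|_{\Hqr}$ is a canonical isometric identification
$$
L^{2}(\Hq;e^{-|q|^{2}}d\lambda)\longrightarrow L^{2}(\Hqr;e^{-|q|^{2}}d\lambda).
$$
Both assertions follow by comparing the actions of $\Boxu_{q}$ and $\Boxd_{q}$ under this identification and by invoking elliptic regularity on the open set $\Hqr$.

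I would first address (ii). Pick any $f\in\mathcal{F}^{2}_{\mu}$, so $f\in L^{2}(\Hq;e^{-|q|^{2}}d\lambda)$ and $\Boxu_{q}f=f\mu$. Inspection of \eqref{LaprealcoordHUnified} shows that on $\Hqr$, where $\chi_{\R}(q)=0$, the operator collapses to $\Boxd_{q}$ as in \eqref{LaprealcoordH1}, whose principal symbol is $-\tfrac{1}{4}\Delta$ and hence is uniformly elliptic on $\Hqr$. The restriction $\widetilde{f}:=f|_{\Hqr}$ thus satisfies $\Boxd_{q}\widetilde{f}=\widetilde{f}\mu$ on $\Hqr$, interpreted distributionally at first. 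Applying standard elliptic regularity slice by slice (or, equivalently, after splitting $f|_{\C_{I}}=F+GJ$ as in Proposition \ref{SpLem} so that the coefficient $I_{q}$ is constant on each slice) yields $\widetilde{f}\in\mathcal{C}^{\infty}(\Hqr)$ and therefore $\widetilde{f}\in\mathcal{E}^{\infty}_{\mu}(\Hqr,\Boxd_{q})$. Combined with $\widetilde{f}\in L^{2}(\Hqr;e^{-|q|^{2}}d\lambda)$, we get $\widetilde{f}\in\widetilde{\mathcal{F}^{2}_{\mu}}$ and the identity in \eqref{mu}; the converse inclusion in the equality is immediate from the very definition \eqref{muL2eigend}.

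For (i), consistent with the rest of the paper (cf. the discussion surrounding \eqref{37r} and Theorem \ref{thm:description}), the spectrum should be read as the point spectrum. Given $\mu\in\mathrm{Spec}_{L^{2}(\Hq;e^{-|q|^{2}}d\lambda)}(\Boxu_{q})$, choose a nonzero $f\in\mathcal{F}^{2}_{\mu}$. The null-set argument above ensures that $\widetilde{f}=f|_{\Hqr}$ is still nonzero in $L^{2}(\Hqr;e^{-|q|^{2}}d\lambda)$, and by part (ii) it lies in $\widetilde{\mathcal{F}^{2}_{\mu}}$. Thus $\mu$ is an eigenvalue of $\Boxd_{q}$ on $L^{2}(\Hqr;e^{-|q|^{2}}d\lambda)$, proving the inclusion of spectra. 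The main obstacle I anticipate is the careful justification of elliptic regularity in the quaternionic setting, where the coefficient $I_{q}$ depends on the point $q$; this is circumvented cleanly by arguing slice by slice, where $I_{q}$ becomes a fixed imaginary unit and the operator reduces to a classical complex elliptic operator to which the usual regularity theorems apply.
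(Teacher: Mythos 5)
Your proposal is correct and follows essentially the same route as the paper: assertion (i) via the fact that $\R$ is $d\lambda$-negligible so restriction to $\Hqr$ is an isometry carrying nonzero eigenfunctions to nonzero eigenfunctions, and assertion (ii) via elliptic regularity for $\Boxd_{q}$ viewed slice by slice as a second-order elliptic operator on $\R\times\R^{*}$. You simply spell out the slice-by-slice reduction that the paper leaves implicit.
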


\begin{proof}
	The first assertion holds true since for every $f \in L^{2}(\Hq;e^{-|q|^2}d\lambda)$, we have $\widetilde{f} \in L^{2}(\Hqr;e^{-|q|^2}d\lambda)$ with
	$\norm{f}_{\Hq} = \norm{\widetilde{f}}_{\Hqr}$. The second assertion is an immediate consequence of the ellipticity of $\Boxd_{q}$ seen as a second order differential operator on $\R\times \R^*$ (see \cite{Evans1998,Morrey2008}).
\end{proof}

The second key lemma concerns the elementary functions
$$\varphi_{\mu,j}(q) := \psi_{\mu,j}(q) \alpha_{j}^I $$
associated to given $\alpha_{j}^I\in \mathcal{C}_\mu$, where $q=x+Iy\in \Hq$ and $\psi_{\mu,j}$ are as in \eqref{elementaryFctmucst}.

\begin{lemma} \label{keyLem} The following results hold true.
	\begin{enumerate}
		\item [(i)] The functions $\varphi_{\mu,j}$ are pairwisely orthogonal in the sense that
		$ \scal{\varphi_{\mu,j},\varphi_{\mu,k}}  = 0$ whenever $j\ne k$.
		\item [(ii)]  The functions $\varphi_{\mu,j}$ belong to $L^{2}(\Hq;e^{-|q|^{2}}d\lambda)$ if and only if $\mu_j=\mu+j$ is a nonnegative integer.
		\item [(iii)] Let $\mu_j=0,1,2,\cdots$. Then, the square norm of $\varphi_{\mu_j,j}$ in $L^{2}(\Hq;e^{-|q|^{2}}d\lambda)$ is given by
		\begin{align}\label{normelementaryFctmucst}
		\norm{\varphi_{\mu_j,j}}^2_{\Hq} = \pi \dfrac{\mu_j!(|j|!)^2}{(\mu_j+j)!}  \int_{\Sq} | \alpha_{j}^I|^2 d\sigma(I).
		\end{align}
	\end{enumerate}
\end{lemma}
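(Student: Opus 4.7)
The plan is to exploit the polar representation $q = re^{I\theta}$ from \eqref{integration}, in which every eigenfunction takes the common factored form
\begin{equation*}
\varphi_{\mu,j}(re^{I\theta}) = r^{|j|}\, e^{Ij\theta}\, M_j(r^2)\, \alpha_j^I,
\qquad M_j(t) := M\!\left(\begin{array}{c} -\mu-(1-\sgn(j))\tfrac{j}{2} \\ |j|+1\end{array}\bigg|\, t\right).
\end{equation*}
For (i), I would substitute this into $\scal{\varphi_{\mu,j},\varphi_{\mu,k}}_{\Hq}$, commute the real scalar $r^{|j|+|k|}$ to the front, and use $e^{-Ij\theta}e^{Ik\theta}=e^{I(k-j)\theta}$ (both factors lying in $\C_I$) to reduce the integrand to $r^{|j|+|k|}\,\overline{\alpha_j^I}\,\overline{M_j(r^2)}\,e^{I(k-j)\theta}\,M_k(r^2)\,\alpha_k^I$. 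Since the flanking coefficients are $\theta$-independent, the sandwich property of quaternionic integration yields $\int_0^{2\pi}A\,e^{I(k-j)\theta}\,B\,d\theta = A\bigl(\int_0^{2\pi}e^{I(k-j)\theta}d\theta\bigr)B$; the central scalar Fourier integral in $\C_I$ evaluates to $2\pi\,\delta_{j,k}$, so the inner product vanishes for $j\ne k$.

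For (ii), since $|e^{Ij\theta}|=1$ the $L^2$-condition reduces to finiteness of $\int_0^{+\infty} r^{2|j|+1}\,|M_j(r^2)|^2\,e^{-r^2}\,dr$ (together with the tacit $\int_{\Sq}|\alpha_j^I|^2 d\sigma(I)<+\infty$). The first Kummer parameter $a_j := -\mu-(1-\sgn(j))j/2$ lies in the slice $\C_\mu$, and the classical asymptotic $M(a,c,t) \sim (\Gamma(c)/\Gamma(a))\,e^{t}\,t^{a-c}$ as $t\to+\infty$ applies inside $\C_\mu$ whenever $a_j\notin\{0,-1,-2,\ldots\}$. In that non-terminating case $|M_j(r^2)|^2 e^{-r^2}$ grows like $e^{r^2} r^{2(\Re(a_j)-c_j)}$, defeating integrability; hence $M_j$ must degenerate to a polynomial, i.e.\ $a_j\in\{0,-1,-2,\ldots\}$. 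A case-split on $\sgn(j)$ translates this condition directly to $\mu_j = \mu+j \in \N$ (which in particular forces $\mu$ itself to be a non-negative integer). Conversely, when $\mu_j\in\N$, $M_j$ is a polynomial and the Gaussian weight absorbs all polynomial growth, yielding square-integrability.

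For (iii), with $\mu_j\in\N$ I would rewrite $M_j$ via the identity $M(-n,\alpha+1,t)=\frac{n!\,\alpha!}{(n+\alpha)!}L_n^{(\alpha)}(t)$ as a scalar multiple of the Laguerre polynomial $L_n^{(|j|)}$, with $n$ determined by the sign of $j$. The norm then factorizes as $2\pi\cdot\bigl(\int_{\Sq}|\alpha_j^I|^2\,d\sigma(I)\bigr)\cdot\int_0^{+\infty}r^{2|j|+1}|M_j(r^2)|^2 e^{-r^2}\,dr$, and after the substitution $t=r^2$ the radial factor reduces to a Laguerre integral evaluated by the orthogonality $\int_0^{+\infty} t^\alpha L_n^{(\alpha)}(t)^2 e^{-t}\,dt = (n+\alpha)!/n!$; collecting the prefactors produces the claimed closed-form expression. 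The main obstacle throughout is the quaternionic non-commutativity in (i): both $\alpha_j^I$ and $M_j(r^2)$ lie in $\mathcal{C}_\mu$ while $e^{I\theta}\in\C_I$, and these commute only when $I=\pm I_\mu$, so the sandwich property of integration is what isolates the purely scalar angular Fourier integral and makes the orthogonality collapse.
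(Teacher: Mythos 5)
Your proposal is correct and follows essentially the same route as the paper's proof: polar coordinates with the angular integral $\int_0^{2\pi}e^{I(k-j)\theta}\,d\theta=2\pi\delta_{j,k}$ for (i), the Poincar\'e-type asymptotic $M(a,c\,|\,t)\sim e^{t}t^{a-c}/\Gamma(a)$ forcing the Kummer function to terminate for (ii), and the reduction to the Laguerre norm integral after $t=r^{2}$ for (iii). Your extra care with the non-commutativity (isolating the scalar angular factor between the $\mathcal{C}_\mu$-valued coefficients) is a welcome refinement of a step the paper treats implicitly, but it does not change the argument.
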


\begin{proof}
	The first assertion follows by direct computation using polar coordinates, $q=re^{I\theta}$.
	Indeed, in these coordinates, the Lebesgue measure $d\lambda$ becomes the product of the standard
	 measures $rdr$ on $\R^{+}$ and the Lebesgue measure $d\theta$ on the unit circle times the standard area element $d\sigma(I)$ on $\Sq$,
	the two-dimensional sphere of imaginary units in $\Hq$. Therefore,  for every $\alpha_{j}^I\in \Hq$, we have 
	\begin{align}
	\scal{\varphi_{\mu_j,j} ,\varphi_{\mu_k,k} } & =
	\int_{\Hqr}  \overline{\psi_{\mu_j,j}(q) \alpha_{j}^I}\psi_{\mu_k,k}(q)  \alpha_{k}^I  e^{-|q|^2} d\lambda(q) \nonumber\\
	&=\int_{0}^\infty r^{|j|+|k|+1} \int_{\Sq} \overline{  \alpha_{j}^I } R_{j,k}(I)   \alpha_{k}^I e^{-r^{2}}  d\sigma(I) dr ,
	\end{align}
	where $R_{j,k}(I)$ stands for
	$$ R_{j,k}(I) :=   \overline{  M\left(  \begin{array}{c} -\mu_j \\ |j|+1 \end{array} \bigg | r^{2} \right) }
	\left(\int_{0}^{2\pi} e^{ (k-j)I\theta} d\theta \right)   M\left( \begin{array}{c} -\mu_k \\ |k|+1 \end{array}  \bigg | r^{2} \right) .$$
	The use of the well-known fact $\int_{0}^{2\pi} e^{(k-j)I\theta} d\theta=2\pi \delta_{j,k}$ completes our check of $(i)$. Now, by the change of variable $t=r^{2}$ we obtain
	\begin{align}
	\scal{\varphi_{\mu_j,j},\varphi_{\mu_k,k}} &=  \pi  \left(\int_{\Sq} | \alpha_{j}^I|^2 d\sigma(I)\right) \left(\int_{0}^\infty t^{j} \left|  M\left(  \begin{array}{c} -\mu_j \\ |j|+1 \end{array} \bigg | t \right) \right|^2  e^{-t}  dt \right)\delta_{j,k}. \label{elemIntegral}
	\end{align}
	Therefore, to prove the second assertion, we make use of the asymptotic behavior
	$$ M\left( \begin{array}{c} a\\ c \end{array}  \bigg | t \right)  \sim  \frac{e^{t}t^{a-c}}{\Gamma(a)} $$
	for $t$ large enough and $a \ne 0,-1,-2, \cdots$, that follows from the Poincar\'e-type expansion \cite[Section 7.2]{Temme1996} 
	$$M\left( \begin{array}{c} a\\ c \end{array}  \bigg | t \right)  \sim  \frac{e^{t}t^{a-c}}{\Gamma\left(a\right)}%
	\sum_{k=0}^{\infty}\frac{{\left(1-a\right)_{k}}{\left(c-a\right)_{k}}}{k!}t^{-k}.$$
	Indeed, if $\mu_j\ne 0,1,2,\cdots$, then the nature of the integral involved in the right-hand side of \eqref{elemIntegral} is equivalent to
	$$  \frac{1}{|\Gamma(-\mu_j)|^2} \int_{0}^\infty   t^{-(2\Re(\mu_j)+|j|+2)} e^{t}   dt$$ which is clearly divergent. 
	Thus, we necessarily have $\mu_j=0,1,2,\cdots$. In this case, the involved Kummer's function is the generalized Laguerre polynomial
	(\cite[Eq. (1), p. 200]{Rainville71})
	\begin{align}\label{HyperLaguerre}
	M\left( \begin{array}{c} -\mu_j \\ |j|+1 \end{array}\bigg | t \right)&=\dfrac{\mu_j!}{(|n|+1)_{\mu_j}}L_{\mu_j}^{(j)}(t)
	\end{align}
	which satisfies the following orthogonality property \cite[Eq. (4), p. 205 - Eq. (7), p. 206]{Rainville71}
	\begin{align}\label{OrthLaguerre}
	\int_{\mathbb{R}^{+}}L^{(\alpha)}_{j}(t)L^{(\alpha)}_{k}(t) t^{\alpha }e^{-t} dt
	= \frac{\Gamma(\alpha +j+1)}{\Gamma(j+1) } \delta_{j,k}.
	\end{align}
	More precisely, starting from \eqref{elemIntegral}, the explicit computation yields
	\begin{align*}
	\norm{\varphi_{\mu_j,j}}^2_{\Hq} & =
	\pi \left( \dfrac{\mu_j!}{(|j|+1)_{\mu_j}}\right)^2 \left(\int_{0}^\infty (L_{\mu_j}^{(j)}(t))^2  t^{j}  e^{-t}  dt\right) \times \left(\int_{\Sq} | \alpha_{j}^I |^2 d\sigma(I)\right)
\\&
	= \pi \dfrac{\mu_j!(j!)^2}{(\mu_j+j)!}  \left( \int_{\Sq} | \alpha_{j}^I|^2 d\sigma(I)\right) .
	\end{align*} 
	This completes the proof of $(ii)$ and $(iii)$.
\end{proof}

\begin{remark}
	If $\mu$ is a fixed nonnegative integer $\mu=n$, then $\psi_{\mu_j,j} \alpha_{j}^I$ belongs to $L^{2}(\Hq;e^{-|q|^2}d\lambda)$ if and only if $j\geq -n$, unless the corresponding $\alpha_{j}^I$ is zero. In this case, the square norm of $\psi_{n,j}$ (in \eqref{elementaryFctmucst})
is given by
	\begin{align} \label{normeElemFct}
	\norm{\psi_{n,j}}^2_{\Hq}  = \pi \dfrac{n!(j!)^2}{(n+j)!}  Area(\Sq) ,
	\end{align}
where $Area(\Sq)$ denotes the surface area of $\Sq$.
\end{remark}

The next result shows in particular that the spectrum of $\Boxu_{q}$ acting  $L^{2}(\Hq;e^{-|q|^{2}}d\lambda)$ is purely discrete and reduces to the quantized eigenvalues known as Landau levels.

\begin{theorem}\label{thm:description}
	The space $\mathcal{F}_{\mu}^{2}$ is nontrivial if and only if $\mu=n=0,1,2, \cdots.$ In this case, a nonzero quaternionic-valued function $f$ belongs to $\eigendn(\Hq)$ if and only if it can be expanded as 
	\begin{align}\label{expansionmonomials}
	f(q)=\sum_{j=-n}^{+\infty} q^{j}   \,  M\left( \begin{array}{c} -n \\ |j|+1 \end{array}  \bigg | |q|^{2}  \right)  \Cj,
	\end{align}
	where the quaternionic constants $\Cn$ satisfy the growth condition
	\begin{align}\label{Growth}
	\norm{f}^{2}_{\Hq}  = \pi \sum_{j=-n}^{+\infty}
	\frac{n!(j!)^2}{(n+j)!} \left(\int_{\Sq} |\Cj|^2  d\sigma(I)\right) <+\infty.
	\end{align}
\end{theorem}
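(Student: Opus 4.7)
The plan is to reduce the $L^2$-right-eigenvalue problem to the $\mathcal{C}^\infty$ right-eigenvalue problem already solved in Theorem \ref{thm:CInftydescription}, and then to select exactly the square-integrable smooth eigenfunctions. The two key inputs are the inclusion $\mathcal{F}_\mu^2 \subset L^2(\Hqr;e^{-|q|^2}d\lambda) \cap \mathcal{E}^\infty_\mu(\Hqr,\Boxu_q)$ already observed, together with Lemma \ref{keyLem}, which provides pairwise orthogonality, an $L^2$-integrability criterion, and an explicit norm formula for the elementary summands $\varphi_{\mu,j} = \psi_{\mu,j}\gamma_{\mu,j}^I$ of the expansion \eqref{leftEigenCinfty}.

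Given $f \in \mathcal{F}_\mu^2$, I first expand it on $\Hqr$ as in \eqref{leftEigenCinfty}, so that $f = \sum_{j \in \Z} \varphi_{\mu,j}$. Pairwise orthogonality (Lemma \ref{keyLem}(i)) then yields the Parseval-type identity $\norm{f}^2_{\Hq} = \sum_j \norm{\varphi_{\mu,j}}^2_{\Hq}$, and finiteness of this sum forces each nonzero term to lie in $L^2$. The integrability criterion in Lemma \ref{keyLem}(ii) accordingly requires that $\mu+(1-\sgn(j))j/2$ be a nonnegative integer whenever $\gamma_{\mu,j}^I \not\equiv 0$. The case $j=0$ already forces $\mu = n$ to be a nonnegative integer; for $j<0$ the further condition $n+j \geq 0$ yields the lower bound $j \geq -n$ appearing in \eqref{expansionmonomials}. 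For each such $j$ the Kummer function reduces, via \eqref{HyperLaguerre}, to a generalized Laguerre polynomial, which is smooth at $q=0$; consequently the $\mathcal{C}^\infty$ eigenfunction on $\Hqr$ extends smoothly to $\Hq$, and inserting the norm formula \eqref{normelementaryFctmucst} of Lemma \ref{keyLem}(iii) into the Parseval identity yields the growth condition \eqref{Growth}. The converse direction follows by reversing these computations: any series of the prescribed form satisfying \eqref{Growth} defines an element of $\mathcal{F}_n^2$.

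The main obstacle I anticipate is justifying the termwise interchange of infinite summation and $L^2$-norm computation; this will be handled by noting that, under the growth condition \eqref{Growth}, the partial sums $S_N = \sum_{|j| \leq N} \varphi_{n,j}$ form a Cauchy sequence in $L^2(\Hq;e^{-|q|^2}d\lambda)$ by pairwise orthogonality, and hence converge in that space to the full sum. A secondary bookkeeping point concerns the two cases $j \geq 0$ and $j < 0$: the compact monomial $q^{j}$ in \eqref{expansionmonomials} must be interpreted, for negative $j$, through the unified expression $q^{(1+\sgn(j))|j|/2}\bq^{(1-\sgn(j))|j|/2}$ of Theorem \ref{thm:CInftydescription}, with the Kummer parameter shifted accordingly so that it always takes the value $-n$ in the case $j \geq 0$ and $-n-j$ in the case $j < 0$.
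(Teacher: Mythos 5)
Your proposal follows essentially the same route as the paper's proof: expand $f|_{\Hqr}$ via Theorem \ref{thm:CInftydescription}, invoke the orthogonality and $L^2$-integrability criteria of Lemma \ref{keyLem} to force $\mu=n\in\{0,1,2,\cdots\}$ and $j\geq -n$, and then read off the growth condition from the norm formula \eqref{normelementaryFctmucst}. The extra care you take with the converse direction and with justifying the termwise Parseval identity via orthogonal partial sums is a welcome refinement but does not change the argument.
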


\begin{proof}
	Fix $\mu \in \Hq$ and assume that there is a nonzero $f\in L^{2}(\Hq;e^{-|q|^{2}}d\lambda)$ solution of $\Boxu_{q}f= f\mu $.
	Then, the realization \eqref{mu} and the proof of Theorem \ref{thm:CInftydescription} show that $\widetilde{f}:=f|_{\Hqr}$ admits the expansion
	\begin{align*}
	\widetilde{f}(|q|e^{I_q\theta}) = \sum_{j\in\Z} \psi_{\mu_j,j}(q) \gamma_{\mu,j}^I .
	 \end{align*}
	 The orthogonality of the $(\psi_{\mu_j,j})_j$ (see (i) of Lemma \ref{keyLem}) infers that
	\begin{align*}
	\norm{\widetilde{f}}^{2}_{\Hqr} &
	= \sum_{j\in\Z}   \norm{\psi_{\mu_j,j}  \gamma_{\mu,j}^I }^2_{\Hq}
	\\&= \frac{\pi}{ Area(\Sq)}\sum_{j\in\Z}   \left(\int_{\Sq} |\gamma_{\mu_j,j}^I|^2 d\sigma(I)\right) \norm{\psi_{\mu_j,j} }^2_{\Hq}.
	\end{align*}
	Therefore, since the nonzero function $f$ belongs to $\mathcal{F}_{\mu}^{2}$, we have necessarily $\norm{\psi_{\mu_j,j}}^2_{\Hq} $ is finite for every $j$ such that
	$$ \int_{\Sq} |\gamma_{\mu,j}^I|^2 d\sigma(I)  \ne 0.$$
	 Now, (ii) of Lemma \ref{keyLem} readily implies that $\mu$ is necessary of the form $\mu=n=0,1,2, \cdots$, and $j\geq -n$.
	In such case, the $\gamma^{I}_{\mu,j}=:\Cj $ are arbitrary in $\Hq=\mathcal{C}_\mu$ for $\mu$ being real. Moreover, we have
	\begin{align*}
	\norm{f}^{2}_{\Hq} &   = \pi \sum_{j=-n}^{+\infty}  \dfrac{n!(j!)^2}{(n+j)!}  \int_{\Sq} | \Cj |^2 d\sigma(I) .
	\end{align*}
	This yields the growth condition \eqref{Growth} and the proof is completed.
\end{proof}

The following result describes the fact that the elements of $\eigendn$ can be expanded as series of the quaternionic Hermite polynomials $H^Q_{j,n}(q,\overline{q})$.

\begin{corollary}\label{cor:expansionHermite}
	The space $\eigendn$ contains the quaternionic Hermite polynomials $(H^Q_{n+j,n})_j$ defined by \eqref{20}.
	Moreover, every element $f$ belonging to $\eigendn $ can be expanded as
	\begin{align}\label{expansionHermite}
f(q) &=\sum_{j=-n}^{+\infty} \dfrac{(-1)^{j}j!}{(n+j)!} H^Q_{n+j,n} (q,\overline{q}) \Cj
	\end{align}
	for some slice quaternionic constants $\Cn$ displaying the growth condition \eqref{Growth}.
\end{corollary}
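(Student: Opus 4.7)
The plan is to derive the expansion \eqref{expansionHermite} directly from the expansion \eqref{expansionmonomials} of Theorem \ref{thm:description} by showing that each elementary eigenfunction $q^{j} M(-n;|j|+1;|q|^2)$ (with $q^j$ interpreted as $\overline{q}^{|j|}$ for $j<0$, following Theorem \ref{thm:CInftydescription}) is a nonzero scalar multiple of the quaternionic Hermite polynomial $H^Q_{n+j,n}(q,\overline{q})$. Once this proportionality is established, the claim becomes a mere relabeling of coefficients.

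The key algebraic identity is the closed form
\[
H^Q_{m,n}(q,\overline{q}) =
\begin{cases}
(-1)^n\, n!\, q^{m-n} L_n^{(m-n)}(|q|^2), & m\geq n, \\[2pt]
(-1)^m\, m!\, \overline{q}^{\,n-m} L_m^{(n-m)}(|q|^2), & m\leq n.
\end{cases}
\]
I would derive both lines directly from the definition \eqref{20}: since $|q|^2=q\overline{q}=\overline{q}q$ is real and therefore central, the mixed product $q^{m-k}\overline{q}^{\,n-k}$ collapses to $q^{m-n}|q|^{2(n-k)}$ when $m\geq n$ and to $|q|^{2(m-k)}\overline{q}^{\,n-m}$ when $m\leq n$. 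Reindexing and comparing term-by-term with the explicit series expansion of the generalized Laguerre polynomials $L_\nu^{(\alpha)}$ yields the two formulas.

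Combining this with the Laguerre realization \eqref{HyperLaguerre}, namely $M(-n;|j|+1;t)=\tfrac{n!\,|j|!}{(n+|j|)!}L_n^{(|j|)}(t)$, gives, for each admissible $j\geq -n$, an explicit nonzero real constant $c_{n,j}$ (a ratio of factorials with a sign) such that
\[
q^{j}\, M\!\left(\begin{array}{c} -n \\ |j|+1\end{array}\bigg|\, |q|^2\right) = c_{n,j}\, H^Q_{n+j,n}(q,\overline{q}).
\]
Plugging this identification into \eqref{expansionmonomials} and setting $\widetilde{C}_j(I):=c_{n,j}\,C_j(I)$ produces the expansion \eqref{expansionHermite}; the apparent sign $(-1)^j$ in the statement differs from the $(-1)^n$ (resp.\ $(-1)^m$) arising in the two branches of the derivation only by a harmless global unit, which is absorbed into the still-arbitrary quaternionic coefficients. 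The growth condition \eqref{Growth} is preserved since $|c_{n,j}|$ is an explicit ratio of factorials, so the $L^2$-norm of $f$ on $\Hq$ expressed through the $\widetilde{C}_j(I)$ is comparable to the one through the $C_j(I)$.

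Finally, the inclusion $H^Q_{n+j,n}\in\eigendn$ for every $j\geq -n$ is automatic by inverting the above proportionality, or more directly from the eigenvalue identity $\Box_q H^Q_{j,k}=kH^Q_{j,k}$ already recorded in the proof of Theorem \ref{thmHermiteOp}, combined with the Gaussian $L^2$-integrability of polynomials on $\Hq$. The only delicate point I expect is the bookkeeping between the two branches $j\geq 0$ and $-n\leq j<0$, each matching a different line of the closed-form identity for $H^Q_{m,n}$; beyond this routine recalibration, no new analytic ingredient is needed.
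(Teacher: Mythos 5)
Your proposal is correct and follows essentially the same route as the paper: the paper's entire proof consists of the identity $q^{j}\, M\left(\begin{smallmatrix} -n \\ |j|+1 \end{smallmatrix}\,\big|\, |q|^{2}\right) = \tfrac{(-1)^{n}j!}{(n+j)!}\, H^Q_{n+j,n}(q,\overline{q})$ (Equation \eqref{HypHermite}, cited from Lemma 3.2 of the authors' earlier work) substituted into \eqref{expansionmonomials}, which is exactly your plan. The only difference is that you re-derive that identity from the definition \eqref{20} via the Laguerre closed form rather than citing it, and you correctly flag the harmless sign/branch bookkeeping (the paper itself writes $(-1)^{j}$ in the corollary but $(-1)^{n}$ in \eqref{HypHermite}).
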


\begin{proof}
	This lies in the fact that the involved confluent hypergeometric function is connected to the quaternionic Hermite polynomials through
	\begin{align}\label{HypHermite}
	q^{j} M\left( \begin{array}{c} -n \\ |j|+1 \end{array}  \bigg | |q|^{2}  \right) = \dfrac{(-1)^{n}j!}{(n+j)!}  H^Q_{n+j,n} (q,\overline{q}),
	\end{align}
	see Lemma 3.2 in \cite{ElHamyani2018}.
	Therefore, the expression of $f(q)$ given through \eqref{expansionmonomials} reduces further to \eqref{expansionHermite}
	with the same growth condition \eqref{Growth}.
\end{proof}

\subsection{Connection to S-polyregular Bargmann spaces of first kind.}
By Corollary \ref{cor:expansionHermite}, the space $\eigendn$ can be realized as the space of the convergent series
$$\sum_{j=0}^{+\infty} \dfrac{(-1)^{n}j!}{j!} H^Q_{j,n} (q,\overline{q}) C_j(I_q)$$
on $\Hq$, where $(C_j(I_q))_j$ are certain quantities in $\in\Hq$ such that
$$\pi \sum_{j=-n}^{+\infty}
\frac{n!(j!)^2}{(n+j)!} \left(\int_{\Sq} |C_j(I_q)^2  d\sigma(I)\right) <+\infty.$$
It reduces further to $\srddn$ when the $C_j(I_q)$ are assumed to be constant functions on $\Sq$, $C_j(I_q)=C_j$.
In particular, by taking $n=0$, the previous growth condition reads simply as
\begin{align*}
\pi Area(\Sq)\sum_{j=0}^{+\infty} j! |C_j|^2   <+\infty.
\end{align*}
Comparing this to the sequential characterization of the slice hyperholomorphic Bargmann space $ \mathcal{F}^{2}_{slice}$ given by Proposition 3.11 in \cite{AlpayColomboSabadiniSalomon2014},
we see that   $\mathcal{F}^{2}_{slice} \subset  \mathcal{F}_{0}^{2}$.

\section{Concluding Remarks: Full S-polyregular Bargmann spaces}\label{gbs2}

Motivated by Theorem 4.2 in \cite{ElHamyani2018} asserting that the quaternionic Hermite polynomials $(H^Q_{j,k})_{j,k}$ form a slice basis of the Hilbert space  $L^{2}(\Hq; e^{-|q|^2}d\lambda)$,
equipped with the scalar product
\begin{align}\label{SP-full}
\scal{ f, g}_{\Hq} =\int_{\Hq}\overline{f(q)}g(q) e^{-|q|^{2}}d\lambda(q),
\end{align}
we define $\mathcal{SR}_{n,full}^{2}$ to be the space of S-polyregular functions (of level $n$) spanned by the quaternionic Hermite polynomials $H^Q_{j,n}$, for varying $j=0,1,2,\cdots$, and belonging to $L^{2}(\Hq; e^{-|q|^2}d\lambda)$.
Then, we have
$$ \scal{ f, g}_{\Hq} = \int_{\Sq} \scal{ f, g}_{\C_I} d\sigma(I)$$
and subsequently, the space $\mathcal{SR}_{n,full}^{2}$ can be described as the right quaternionic vector space consisting of the convergent series
$$ \sum_{j=0}^{+\infty} H^Q_{j,n}(q,\overline{q})  C_j(I_q)$$
on $\Hq$, with $C_j:\Sq \longrightarrow \Hq$, and such that
$$ \pi n! \sum_{j=0}^\infty  \int_{\Sq} |C_j(I)|^2 d\sigma(I) <+\infty.$$
This is exactly the sequential characterization of $L^2$-eigenspace
$\eigendn$.  The particular case of $n=0$ corresponds to the full hyperholomorphic Bargmann space
\begin{align} \label{full}
\mathcal{F}_{full}^{2} :=\mathcal{SR}\cap L^{2}(\Hq;e^{-|q|^2}d\lambda)
\end{align}
defined as the right quaternionic Hilbert space of all slice regular functions that are $e^{-|q|^2}d\lambda$-square integrable on $\Hq$. This lies on the fact that $ \mathcal{F}^{2}_{full}$ is the space of functions
$f(q) = \sum\limits_{j=0}^\infty q^j \Cj$ satisfying
$$ \norm{f}^2_{\Hq} = \pi\sum_{j=0}^{+\infty} j! \left(\int_{\Sq} |\Cj|^2  d\sigma(I)\right) < +\infty.$$
More generally, it is not difficult to prove that the spaces $\mathcal{SR}_{n,full}^{2} $ are right quaternionic Hilbert spaces. We call them here the full S-polyregular Bargmann spaces of second kind of level $n$. The quaternionic Hermite polynomials $H^Q_{j,n}$, for varying $j=0,1,2,\cdots$, constitute an orthogonal "slice" basis of it. 

\quad

{\bf\it\bf  Acknowledgments.}  The authors would like to thank the anonymous referees for helpful comments and suggestions.
%
The research work of A.G. was partially supported by a grant from the Simons Foundation.


\begin{thebibliography}{45}
\bibitem{AbStegun1972} Abramowitz M., Stegun I.A.,
	Handbook of mathematical functions with formulas, graphs, and mathematical tables.
	National Bureau of Standards Applied Mathematics Series, 55, Washington, D.C. 1964. 
\bibitem{AbreuBalazsGossonMouayn2015}  Abreu L.D., Balazs P., de Gosson M., Mouayn Z., Discrete coherent states for higher Landau levels, Ann. Physics 363 (2015) 337--353.
\bibitem{AbreuFeichtinger2014} Abreu L.D., Feichtinger H.G.,
	Function spaces of polyanalytic functions.
	Harmonic and complex analysis and its applications, 1--38, Trends Math., Birkha\"user, Springer, Cham, 2014.
\bibitem{AlpayColomboSabadiniSalomon2014} Alpay, D., Colombo, F., Sabadini, I., Salomon, G.: The Fock Space in the Slice Hyperholomorphic Setting. Hypercomplex Analysis: New Perspectives and Applications in Trends in Mathematics (Birkhauser, 2014), pp. 43?59  Springer, Berlin (2014).
\bibitem{AlpayDikiSabadini2019-3}   Alpay D.,  Diki K., Sabadini I.,  On Slice Polyanalytic Functions of a Quaternionic Variable. Results
	Math (2019) 74: 17. 
\bibitem{Altavilla2018} Altavilla A. Twistor interpretation of slice regular functions. J. Geom. Phys. 123 (2018), 184--208.
\bibitem{AltavilladeFabritiis2018-1} Altavilla A., de Fabritiis C., s-regular functions which preserve a complex slice. Ann. Mat. Pura Appl. (4) 197 (2018), no. 4, 1269--1294. 
\bibitem{AltavilladeFabritiis2019-2}  Altavilla A.,  de Fabritiis C., $\star$-exponential of slice-regular functions.
	Proc. Amer. Math. Soc. 147 (2019), no. 3, 1173--1188.
\bibitem{AIM2000JMP} Askour N., Intissar A., Mouayn Z.,
Explicit formulas for reproducing kernels of generalized Bargmann spaces of ${\C}^n$. J. Math. Phys. 41 (2000), no. 5, 3057--3067.
\bibitem{Balk1991} Balk M. B., Polyanalytic functions. Mathematical Research, 63. Akademie-Verlag, Berlin, 1991.
\bibitem{Bargmann1961} Bargmann V., 
{  On a Hilbert space of analytic functions and an associated integral transform}.
	Comm. Pure Appl. Math.  14  (1961) 187--214.
\bibitem{BenahmadiG2019itsf} Benahmadi A.,  Ghanmi A.,
Non-trivial $1$-d and $2$-d Segal--Bargmann transforms. Integral Transforms Spec. Funct. 30 (2019), no. 7, 547--563.
\bibitem{Brackx1976a} Brackx F.,  On $k$--monogenic functions of a quaternion variable. Functional theoretical Methods in Differential Equations.
       Research Notes in Mathem. Sciences., 50,  (1976) 22-44. 
\bibitem{Brackx1976b} Brackx F.,: Non-$(k)$-monogenic points of functions of a quaternion variable. Lect. Notes Math. 561,  (1976) 138-149, Zbl. 346.30038
\bibitem{ColomboGonzGlez-CervantesSabadini2012} Colombo F., González-Cervantes J.O., Sabadini I.,
 On slice biregular functions and isomorphisms of Bergman spaces. Complex Var. Elliptic Equ. 57 (2012), no. 7-8, 825--839.
\bibitem{deFabritiisGentiliSarfatti2018} de Fabritiis C., Gentili G., Sarfatti G.,
     Quaternionic Hardy spaces. Ann. Sc. Norm. Super. Pisa Cl. Sci. (5) 18 (2018), no. 2, 697--733.
\bibitem{DG2017} Diki K., Ghanmi A.,  
{  A quaternionic analogue of the Segal--Bargmann transform}.
	Complex Anal. Oper. Theory 11 (2017), no. 2, 457--473.
\bibitem{ElHamyani2018}  El Hamyani A., Ghanmi A., 
	 On some analytic properties of slice poly-regular Hermite polynomials. Math. Methods Appl. Sci. 41 (2018), no. 17, 7985--8002.
\bibitem{ElHamyaniG2017}  El Hamyani A., Ghanmi A.,
              Generalized quaternionic Bargmann-Fock spaces and associated Segal--Bargmann transforms. arXiv:1707.01674.
\bibitem{Evans1998} Evans L.C.,
    Partial differential equations, Graduate Studies in Mathematics, 19 (2nd ed.), 1998,
    Providence, RI: American Mathematical Society.  
\bibitem{Folland1989} Folland G.B.,  
	{ Harmonic analysis in phase space}.
	Annals of Mathematics Studies, 122.
	Princeton University Press, Princeton, NJ; 1989.
\bibitem{Fliess1974} Fliess M.,
Matrices de Hankel. (French) J. Math. Pures Appl. (9) 53 (1974), 197--222.
\bibitem{Ghanmi2019} Ghanmi A., 
Polyregularity of the dot product of slice regular functions.
Preprint,  arXiv:1901.10110.
\bibitem{GentiliStoppato08}   Gentili G., Stoppato C.,
       {\it The open mapping theorem for regular quaternionic functions}.
          Ann. Sc. Norm. Super. Pisa Cl. Sci. (5) 8 (2009), no. 4, 805--815.
\bibitem{GentiliStoppatoStruppa2013} Gentili G., Stoppato C., Struppa D.C., 
           {\it Regular functions of a quaternionic variable}.
            Springer Monographs in Mathematics, 2013.
\bibitem{GentiliStruppa07} Gentili G., Struppa D.C., 
{  A new theory of regular functions of a quaternionic variable}.
	Adv. Math. 216 (2007), 279--301.
\bibitem{GentiliStruppa2010} Gentili G., Struppa D.,
         Regular functions on the space of Cayley numbers.
         Rocky Mountain J. Math. 40 (2010), no. 1, 225--241.
\bibitem{Gh13ITSF}  Ghanmi A.,
		{  Operational formulae for the complex Hermite polynomials $H_{p,q}(z, \bar z)$}.
			{Integral Transforms Spec. Funct.},  Volume 24, Issue 11 (2013) pp  884-895.  
\bibitem{GI-JMP2005} Ghanmi A., Intissar A.,
{  Asymptotic of complex hyperbolic geometry and $L^2$-spectral analysis of landau-like hamiltonians}.
	J. Math. Phys., 46 (2005) no 3.
\bibitem{GhiloniPerotti2011-5} Ghiloni R., Perotti A.,
    Slice regular functions on real alternative algebras.
    Adv. in Math., v. 226, n. 2 (2011), 1662--1691.
\bibitem{Ito1952}  It\^o K.,
	{  Complex multiple Wiener integral}.
		{ Jap. J. Math.}, 22 (1952) 63-86.
\bibitem{Morrey2008}  Morrey Jr. C.B.,
		Multiple integrals in the calculus of variations. 2008
		Series: Classics in Mathematics
		Publisher: Springer, Year: 2008
\bibitem{Rainville71}  Rainville E.D.,  
	{ Special functions},
	Chelsea Publishing Co., Bronx, N.Y., (1960).
\bibitem{Temme1996} Temme N.M.,
	Special Functions: An Introduction to the Classical Functions of Mathematical Physics.
	John Wiley $\&$ Sons Inc., New Yor, 1996.
\bibitem{TobarMandic2014}  Tobar F.A., Mandic D.P., 
{  Quaternion reproducing kernel Hilbert spaces: existence and uniqueness conditions}.
	Trans. Inf. Theory, 60, no. 9 (2014) 5736--5749.
\bibitem{Tricomi1960} Tricomi F.G.,
	Fonctions hyperg\'eom\'etriques confluentes.
	M\'emorial des Sciences Math\'ematiques, Fasc. CXL. Gauthier--Villars, Paris 1960. 
\bibitem{Vasilevski2000}{ Vasilevski N.L.},
{  Poly-Fock spaces}.
	Oper. Theory, Adv. App. 117 (2000)  371--386.
\bibitem{Zhu2012}  Zhu K., 
	Analysis on Fock spaces.
	Graduate Texts in Mathematics, 263. Springer, New York, 2012.
\end{thebibliography}
\end{document}